\DeclareMathAlphabet{\mathpzc}{OT1}{pzc}{m}{it} 
\newtheorem{Th}{Theorem}[section]              
\newtheorem{Lem}{Lemma}[section]
\newcommand{\N}{\mathbb{N}}
\newcommand{\R}{{\mathbb{R}}}
\newcommand{\W}{\mathbb{W}}
\newcommand{\fa }{\frak{a}(t)}
\newcommand{\fb }{\frak{b}(t)}
\newcommand{\eps}{\varepsilon}
\DeclareMathOperator{\supp}{supp}
\title[Riesz transforms and multipliers for the Bessel-Grushin operator]
      {Riesz transforms and multipliers for the Bessel-Grushin operator}
\author[V. Almeida]{V. Almeida}
\author[J.J. Betancor]{J.J. Betancor}
\author[A.J. Castro]{A.J. Castro}
\author[K. Sadarangani]{K. Sadarangani}
\address{\newline
        Víctor Almeida, Jorge J. Betancor, Alejandro J. Castro \newline
        Departamento de Análisis Matemático,
        Universidad de la Laguna, \newline
        Campus de Anchieta, Avda. Astrofísico Francisco Sánchez, s/n, \newline
        38271, La Laguna (Sta. Cruz de Tenerife), Spain}
\email{valmeida@ull.es, jbetanco@ull.es, ajcastro@ull.es}
\address{\newline
        Kishin Sadarangani \newline
        Departamento de Matemáticas,
        Universidad de Las Palmas de Gran Canaria, \newline
        Campus de Tafira Baja, \newline
        35017, Las Palmas de Gran Canaria, Spain}
\email{ksadaran@dma.ulpgc.es}
\keywords{Bessel operators, Grushin operators, spectral multipliers, Riesz transforms, R-boundedness}
\subjclass[2010]{42C, 42C10, 43A90.}
\thanks{The second and the third authors are partially supported by MTM2010/17974.
        The third author is also supported by a FPU grant from the Government of Spain.}
\begin{document}

  \footnotetext{Date: \today.}

  \maketitle                

\begin{abstract}
    We establish that the spectral multiplier $\frak{M}(G_{\alpha})$ associated to the differential operator
    $$ G_{\alpha}=- \Delta_x +\sum_{j=1}^m{{\alpha_j^2-1/4}\over{x_j^2}}-|x|^2 \Delta_y \; \text{ on } (0,\infty)^m \times \R^n,$$
    which we denominate Bessel-Grushin operator,
    is of weak type $(1,1)$ provided that $\frak{M}$ is in a suitable local Sobolev space.
    In order to do this we prove a suitable weighted Plancherel estimate.
    Also, we study $L^p$-boundedness properties of Riesz transforms associated to $G_{\alpha}$, in the case $n=1$.
\end{abstract}

\section{Introduction} \label{sec:intro}

If $m,n\in\N$ the Grushin operator on $\R^m\times \R^n$ is defined by
$$G
    =-\sum_{j=1}^m{{\partial^2}\over{\partial x_j^2}} -\left(\sum_{j=1}^m{x_j^2}\right)\left(\sum_{j=1}^n{{\partial^2}\over{\partial y_j^2}}\right).$$
In this paper we consider the operators we call Bessel-Grushin
operators which appear when Bessel operators replace the first Laplacian operator in $G$. If $\beta >-1/2$ the Bessel operator is defined by
$$ B_{\beta}={{d^2}\over{dx^2}}-{{\beta^2-1/4}\over{x^2}}, \quad  x\in(0,\infty).$$

Let $m,n\in\N$ and $\alpha=(\alpha_1,...,\alpha_m)\in(-1/2,\infty)^m$.
We introduce the
Bessel-Grushin operator $G_{\alpha}$ as follows
$$G_{\alpha}
    =-\sum_{j=1}^m{B_{\alpha_j}} -\left(\sum_{j=1}^m{x_j^2}\right)\left(\sum_{j=1}^n{{\partial^2}\over{\partial y_j^2}}\right) ,\;\;\mbox{on}\;\;(0,\infty)^m\times\R^n.$$
Our objective in this paper is to study $L^p$-boundedness
properties of spectral multipliers and  Riesz transforms associated
with $G_{\alpha}$, being $n=1$ in the case of Riesz transforms.  We are motivated by the recent papers of
Chen and Sikora \cite{CS},
Jotsaroop, Sanjay and Thangavelu \cite{JST},
Martini and Muller \cite{MM}
and
Martini and Sikora \cite{MaSi},
about Grushin operators.

We consider the Laguerre operator
$$L_{\beta}
    =- {{d^2}\over{dx}} +{{\beta^2-1/4}\over{x^2}}+x^2,\;\;\mbox{on}\;\;(0,\infty).$$
where $\beta>-1/2$. We have that, for every $k\in\N$,
$$L_{\beta}{\varphi}_k^{\beta}
    =2(2k+\beta +1)\varphi_k^{\beta},$$
where
$$\varphi_k^{\beta}(x)=\left({{2\Gamma(k+1)}\over{\Gamma(k+\beta +1)}}\right)^{1/2}e^{-{x^2}/ 2}x^{\beta +{1/ 2}}l_{k}^{\beta}(x^2),\;\;x\in (0,\infty),$$
and $l_{k}^{\beta}$ represents the $k$-th Laguerre polynomial of order $\beta$ (\cite[p. 100-102]{Sz}).
The family $\left\{\varphi_k^{\beta}\right\}_{k\in\N}$ is an orthonormal basis in $L^2(0,\infty)$.

Let $\alpha=(\alpha_1,...,\alpha_m)\in(-1/2,\infty)^m$. We define the Laguerre operator $\frak{L}_\alpha$ on $(0,\infty)^m$ by
$$\frak{L}_\alpha=\sum_{j=1}^m{L_{\alpha_j}}.$$

If for $k=(k_1,...,k_m)\in\N^m$,
$$\Phi_k^\alpha(x)=\prod_{j=1}^m{\varphi_{k_j}^{\alpha_j}(x_j)},\;\;\;\;x=(x_1,...,x_m)\in(0,\infty)^m,$$
we have that
$$\frak{L}_\alpha\Phi_k^\alpha(x)=2(2\frak{s}(k)+\frak{s}(\alpha)+m)\Phi_k^\alpha(x).$$
where $\frak{s}(k)=k_1+...+k_m$ and $\frak{s}(\alpha)=\alpha_1+...+\alpha_m$.

The system $\{\Phi_k^\alpha(x)\}_{k\in\N^m}$ is an orthonormal basis in $L^2((0,\infty)^m)$.

We denote by $\mathcal{F}_2(f)$ the Fourier transform of $f\in L^2((0,\infty)^m\times\R^n)$ with respect to the second $\R^n$-variable, that is,

$$\mathcal{F}_2(f)(x,y)={1\over{(2\pi)^{n/2}}}\int_{\R^n}{e^{-iy\cdot z}f(x,z)dz},\;\;\;\;(x,y)\in(0,\infty)^m\times\R^n.$$

The Bessel-Grushin operator $\widetilde G_{\alpha}$ is defined by
$$\widetilde G_{\alpha}(f)
    =\mathcal{F}_2^{-1}\left[\sum_{k\in\N^m}{2(2\frak{s}(k)+\frak{s}(\alpha)+m)|u|c_k^{\alpha}(\mathcal{F}_2(f);u)}\Phi_k^{\alpha}(x;|u|)\right],\;\;f\in D(\widetilde G_{\alpha}),$$
where $\mathcal{F}_2^{-1}$ represents the inverse map of $\mathcal{F}_2$ and
$$D(\widetilde G_{\alpha})
    =\left\{f\in L^2((0,\infty)^m\times\R^n):\;\sum_{k\in\N^m}^\infty{2(2\frak{s}(k)+\frak{s}(\alpha) +m)|u|c_k^{\alpha}(\mathcal{F}_2(f);u)}\Phi_k^{\alpha}(x;|u|) \in L^2((0,\infty)^m\times\R^n)\right\}.$$
Here, for every $k\in\N^m$ and $g\in L^2((0,\infty)^m\times\R^n)$,
$$c_k^{\alpha}(g;u)=\int_{(0,\infty)^m}{\Phi_k^{\alpha}(x;|u|)g(x,u)dx}, \quad u\in\R^n,$$
where
$$\Phi_k^{\alpha}(x;a)
    =a^{m\over 4}\Phi_k^{\alpha}(\sqrt{a}x),\;\;x\in (0,\infty)^m\;\mbox{and}\;a>0.$$

Note that, for every $a>0$, $\left\{\Phi_k^{\alpha}(\cdot;a)\right\}_{k\in\N^m}$ is an orthonormal basis in $L^2((0,\infty)^m)$.
Moreover, denoting by $\frak{L}_{\alpha}(a)$ the operator
$$\frak{L}_{\alpha}(a)
    =\sum_{j=1}^m{(-B_{\alpha_j}+a^2x_j^2)}, \;\;\;\; a>0,$$
we have that
$$\frak{L}_{\alpha}(a)\Phi_k^{\alpha}(\cdot,a)
    =2(2\frak{s}(k)+\frak{s}(\alpha) +m)a\Phi_k^{\alpha}(\cdot;a), \quad k\in\N^m\;\mbox{and}\;a>0.$$
If $f\in C_c^\infty((0,\infty)^m\times\R^n)$, the space of smooth functions with compact support in $(0,\infty)^m \times \R^n$,
 it is clear that $\widetilde G_{\alpha}f=G_{\alpha}f$.
In the sequel we write also $G_{\alpha}$ to refer us to the operator $\widetilde  G_{\alpha}$.

Suppose that $\frak{M}$ is a bounded Borel function on $(0,\infty)$. We define the spectral multiplier $\frak{M}(G_{\alpha})$ associated with $\frak{M}$ by
$$\frak{M}(G_{\alpha})f
    =\mathcal{F}_2^{-1}\left[\sum_{k\in\N^m}^\infty{\frak{M}\big(2(2\frak{s}(k)+\frak{s}(\alpha) +m)|u|\big)c_k^{\alpha}(\mathcal{F}_2(f);u)}\Phi_k^{\alpha}(x;|u|)\right], \quad f\in L^2((0,\infty)^m\times\R^n).$$

Since $\frak{M}$ is bounded the operator $\frak{M}(G_{\alpha})$ is bounded from $L^2((0,\infty)^m\times\R^n)$ into itself.
As usual, the question is to give conditions on the function $\frak{M}$ such that the operator $\frak{M}(G_{\alpha})$
can be extended from $L^2((0,\infty)^m\times\R^n)\cap L^p((0,\infty)^m\times\R^n)$ to $L^p((0,\infty)^m\times\R^n)$
as a bounded operator from $L^p((0,\infty)^m\times\R^n)$ into itself, when $p\neq 2$.

In the classical H\"ormander multiplier, local Sobolev norms are considered to describe smoothness of $\frak{M}$ in order to get
$L^p$-boundedness of the multiplier operator. These arguments have been used by
Christ \cite{Ch},
Duong, Ouhabaz and Sikora \cite{DOS},
Duong, Sikora and Yen \cite{DSY},
Hebisch \cite{He},
Hulanicki and Stein \cite[cf.]{FoSt},
in different settings.

If $1\leq q\leq\infty$ and $s>0$, we denote by $W_q^s(\R)$ the $L^q$-Sobolev space of order $s$.
We choose $\eta\in C_c^\infty(0,\infty)$ not identically zero. The "local" $W_q^s$ norm $\| \cdot \|_{MW_q^s}$ is defined by
$$\|\frak{M}\|_{MW_q^s}=\sup_{t>0}\|\eta\delta_t\frak{M}\|_{W_q^s},$$
where $\delta_t\frak{M}(s)=\frak{M}(ts),\;t,s\in (0,\infty)$. When we consider different functions $\eta$ we get equivalent local Sobolev norms.

We now establish our result about spectral multipliers for Bessel-Grushin operators.

\begin{Th}\label{Th1.1}
    Let $\alpha\in[1/ 2,\infty)^m$, and $D=\max\{m+n,2n\}$. Suppose that $\frak{M}$ is a bounded Borel measurable function on $(0,\infty)$ such that $\|\frak{M}\|_{MW_2^s}<\infty$ for $s>D/2$.
    Then, the spectral multiplier $\frak{M}(G_{\alpha})$ can be extended from $L^1((0,\infty)^m\times\R^n)\cap L^2((0,\infty)^m\times\R^n)$ to
    $L^1((0,\infty)^m\times\R^n)$ as a bounded operator from $L^1((0,\infty)^m\times\R^n)$ into $L^{1,\infty}((0,\infty)^m\times\R^n)$.
\end{Th}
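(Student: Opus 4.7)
The plan is to embed the statement in the abstract multiplier framework of Duong, Ouhabaz and Sikora \cite{DOS}, which derives weak type $(1,1)$ for $\frak{M}(G_{\alpha})$ from two ingredients: (i) Gaussian heat kernel bounds for the semigroup $e^{-tG_{\alpha}}$ on an appropriate doubling metric measure space, and (ii) a weighted Plancherel estimate for the spectral kernel of $\frak{M}(G_{\alpha})$ at each dyadic scale. The natural control distance on $(0,\infty)^m\times\R^n$ is the Carnot--Carath\'eodory distance $\rho$ associated to the vector fields that generate $G_{\alpha}$; for this metric the volume function satisfies $V(z,r)\sim r^{m+n}$ for small $r$ and $V(z,r)\sim r^{m+2n}$ for large $r$, and these two asymptotic regimes are exactly what dictates the Sobolev threshold through the dimension $D=\max\{m+n,2n\}$.

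First, using the partial Fourier transform $\mathcal{F}_2$ and the spectral resolution $\{\Phi_k^{\alpha}(\cdot;|u|)\}_{k\in\N^m}$ recalled above, I would write the integral kernel of $\frak{M}(G_{\alpha})$ explicitly as
$$K_{\frak{M}}((x,y),(x',y')) = \frac{1}{(2\pi)^{n}}\int_{\R^n} e^{i(y-y')\cdot u}\sum_{k\in\N^m}\frak{M}\bigl(2(2\frak{s}(k)+\frak{s}(\alpha)+m)|u|\bigr)\,\Phi_k^{\alpha}(x;|u|)\,\Phi_k^{\alpha}(x';|u|)\,du.$$
Gaussian bounds for the heat semigroup $e^{-tG_{\alpha}}$ would be obtained by inserting $\frak{M}(\lambda)=e^{-t\lambda}$ into this representation and combining Mehler's formula for the Laguerre semigroup with the oscillatory integral in $u$, following the approach used for Grushin operators in \cite{JST} and \cite{MaSi}.

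The technical heart of the proof, and the step I expect to be the main obstacle, is the weighted Plancherel estimate. For $\frak{M}$ supported in a dyadic interval $[R/2,2R]$ I would aim to establish
$$\int_{(0,\infty)^m\times\R^n}\bigl(1+R^{1/2}\rho((x,y),(x',y'))\bigr)^{2s}\bigl|K_{\frak{M}}((x,y),(x',y'))\bigr|^{2}dx'dy' \leq \frac{C\,\|\delta_{R}\frak{M}\|_{W_2^s}^2}{V((x,y),R^{-1/2})}$$
for every $s>D/2$. The unweighted part of this inequality would follow from Plancherel in the variable $u\in\R^n$ together with the $L^2$-orthonormality of $\{\Phi_k^{\alpha}(\cdot;|u|)\}_{k\in\N^m}$ and the change of variable $\lambda=2(2\frak{s}(k)+\frak{s}(\alpha)+m)|u|$, which converts the sum-integral into a one-dimensional integral weighted by the density of eigenvalues matching $\|\delta_{R}\frak{M}\|_{L^2}^2$. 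The weight $\rho^{2s}$ must then be absorbed in two ways: the component of $\rho$ in the $y$-direction is handled by integration by parts with respect to $u$, which transfers derivatives onto $\frak{M}$ and produces the $W_2^s$-norm; the component in the $x$-direction requires sharp pointwise and Mehler-type bounds for the products $\Phi_k^{\alpha}(x;|u|)\Phi_k^{\alpha}(x';|u|)$, and the hypothesis $\alpha_j\ge 1/2$ is what guarantees the needed uniform control near $x_j=0$ without additional $x^{\alpha_j+1/2}$ singularities. The two behaviours of $V(z,r)$ give rise to the dimensions $m+n$ and $2n$, and the larger of the two fixes the threshold $D/2$.

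Once the weighted Plancherel estimate and the Gaussian heat kernel bounds are available, the abstract theorem of \cite{DOS}, in the version adapted to non-homogeneous doubling spaces exploited by Martini and Sikora \cite{MaSi}, immediately implies that $\frak{M}(G_{\alpha})$ extends to a bounded operator from $L^1((0,\infty)^m\times\R^n)$ into $L^{1,\infty}((0,\infty)^m\times\R^n)$, which is exactly the conclusion of the theorem.
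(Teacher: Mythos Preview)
Your overall framework---Gaussian heat kernel bounds plus a weighted Plancherel estimate, assembled via the abstract machinery of \cite{DOS} and \cite{MaSi}---matches the paper's strategy. However, the weighted Plancherel estimate you write down is not the one the paper proves, and your explanation of where $D=\max\{m+n,2n\}$ comes from is off, so the proposal as stated has a real gap.

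You propose the full-distance estimate
\[
\int (1+R^{1/2}\rho)^{2s}\,|K_{\frak M}|^2 \le \frac{C\,\|\delta_R\frak M\|_{W_2^s}^2}{V((x,y),R^{-1/2})}
\]
for all $s>D/2$, arguing that the $y$-part of $\rho$ is absorbed by integrating by parts in $u$ and the $x$-part by pointwise bounds on $\Phi_k^\alpha\Phi_k^\alpha$. The paper does \emph{not} establish this; indeed, an estimate of this form with the full $\rho$-weight would be the key step toward the conjectured sharp threshold $(m+n)/2$, which the paper explicitly leaves open. What the paper actually proves (its Lemma corresponding to \cite[Lemma~11]{MaSi}) is a weighted estimate with the \emph{partial} weight $w_R((x,t),(y,z))=\min\{R,1/|y|\}\,|x|$ and the plain $L^2$-norm $\|\delta_{R^2}H\|_{L^2}$ on the right. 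The weight power is constrained to $\gamma<n/2$, a restriction forced by integrability of $|u|^{-2\gamma}$ over $\R^n$ after rescaling. The proof of that lemma is not via pointwise product bounds but via an operator inequality $\||M|^\gamma f\|_2\le C\|G_\alpha^{\gamma/2}|\mathcal D|^{-\gamma}f\|_2$ (itself proved by reducing to the boundedness of $|x|^2\frak L_\alpha^{-1}$ on $L^2((0,\infty)^m)$ and interpolating), combined with Muckenhoupt--Webb pointwise bounds on individual Laguerre functions.

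Consequently, the dimension $D$ does not arise from the two volume asymptotics $r^{m+n}$ and $r^{m+2n}$ as you suggest. The $m+n$ is indeed the local (Euclidean) dimension entering through the unweighted $W_\infty$-type estimate, but the $2n$ comes precisely from the ceiling $\gamma<n/2$ on the weight exponent in the partial-weight Plancherel estimate; the final threshold $D/2=\max\{m+n,2n\}/2$ is produced by the interpolation argument of \cite[Section~4]{MaSi} between these two ingredients. Your sketch does not supply this mechanism. A smaller point: the Gaussian bounds are obtained in the paper not from Mehler's formula directly but from the domination $e^{-tG_\alpha}\le e^{-tG}$ (valid because $\alpha_j\ge 1/2$ makes the potential $\sum(\alpha_j^2-1/4)/x_j^2$ nonnegative), after which the known Grushin bounds from \cite{MaSi} apply.
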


Note that when $\frak{M}$ is a bounded measurable function on $(0,\infty)$, $\frak{M}(G_{\alpha})$ is bounded from
$L^2((0,\infty)^m\times\R^n)$ into itself, and then classical interpolation theorems, duality and Theorem \ref{Th1.1} imply that the ope\-rator
$\frak{M}(G_{\alpha})$ can be extended from $L^p((0,\infty)^m\times\R^n)\cap L^2((0,\infty)^m\times\R^n)$ to $L^p((0,\infty)^m\times\R^n)$
as a bounded ope\-rator from $L^p((0,\infty)^m\times\R^n)$ into it self, for every $1<p<\infty$, provided that $\frak{M}$
satisfies the conditions in Theorem \ref{Th1.1}.

According to \cite[Theorem 1]{MM} we conjeture that when $m<n$ the result in Theorem~\ref{Th1.1} is also true for
$s > (m+n)/2$. Moreover, we also conjeture that the order $(m+n)/2$ of differentiability in Theorem~\ref{Th1.1}
cannot be decreased. In order to show this the idea is to work with the imaginary power $G_\alpha^{it}$, $t \in \mathbb{R}$,
of the Bessel-Grushin operator and to adapt the arguments in \cite{SikWr} (see \cite[Section 5]{CS} and \cite[Section 5]{MaSi}).
We will study these questions in a foregoing paper.

The key result in the proof of Theorem \ref{Th1.1} is a weighted Plancherel type estimate.

Next we introduce Riesz transforms, when $n=1$, associated with Bessel-Grushin operators. Let $\beta>-1/2$ and $a>0$. We define
\begin{equation}\label{3.1}
    A_{\beta}(a)
        ={d\over{dx}}+ax - \frac{\beta +{1/ 2}}{x}\;\;\mbox{and}\;\;A_{\beta}^*(a)=-{d\over{dx}}+ax- \frac{\beta+{1/ 2}}{x},\;\;\;x\in(0,\infty).
\end{equation}
Note that $A_{\beta}^*(a)$ is the "formal" adjoint of $A_{\beta}(a)$ in $L^2(0,\infty)$. We have that
$$-B_{\beta}+a^2x^2
    =A_{\beta}^*(a)A_{\beta}(a)+2a(\beta +1),\;\;\;\;a,x\in(0,\infty).$$
This decomposition suggests to "formally" define  the Riesz transforms for the scaled Laguerre operator $\frak{L}_{\alpha}(a)$, $\alpha=(\alpha_1,...,\alpha_m)\in(-1/2,\infty)^m$ and $a>0$, as follows: for every $j=1,...,m$

\begin{equation}\label{I1}
R_{\alpha,j}(a)
    =A_{\alpha_j}(a)\frak{L}_{\alpha}^{-{1/2}}(a)
    \end{equation}
  and
  \begin{equation}\label{I2}
\widetilde {R}_{\alpha,j}(a)
    =A_{\alpha_j}^*(a)\frak{L}_{\alpha}^{-{1/2}}(a)
    \end{equation}
According to some well-known properties of Laguerre functions (see, for instance, \cite[(2.17) and (2.18), p. 1004]{HTV2} and \cite[p. 406]{NoSt1}) we have that, for every $k=(k_1,...,k_m)\in\N^m$, $\alpha=(\alpha_1,...,\alpha_m)\in(-1/2,\infty)^m$,  $a>0$ and $j=1,...,m$,

$$A_{\alpha_j}(a)\Phi_k^{\alpha}(x;a)
    =-2\sqrt{k_ja}\Phi_{k-e_j}^{\alpha +e_j}(x;a),\;\;x\in (0,\infty)^m,$$
and
$$A_{\alpha_j}^*(a)\Phi_k^{\alpha}(x;a)
    =-2\sqrt{(k_j+1)a}\Phi_{k+e_j}^{\alpha -e_j}(x;a),\;\;x\in (0,\infty)^m.$$
Here $e_j=(e_j^1,...,e_j^m)$, where $e_j^i=\left\{\begin{array}{l} 0\;,\;\;\;i\neq j \\ 1\;,\;\;\;i=j \end{array}\right.$, $j=1,...,m$, and  we understand  $\Phi_{-1}^{\beta}=0$, for every $\beta>-1/2$.

For every $\gamma >0$ and $\alpha\in(-1/2,\infty)^m$, the $-\gamma$-th power $\frak{L}_{\alpha}^{-\gamma}(a)$ is defined by
$$\frak{L}_{\alpha}^{-\gamma}(a)f
    =\sum_{k\in\N^m}{{{c_k^{\alpha}(a)(f)}\over{(2(2\frak{s}(k)+\frak{s}(\alpha) +m)a)^{\gamma}}}\Phi_k^{\alpha}(\cdot;a)},\;\;f\in L^2((0,\infty)^m),$$
where, for every $k\in\N^m$,
$$ c_k^{\alpha}(a)(f)=\int_{(0,\infty)^m}{\Phi_k^{\alpha}(x;a)f(x)dx},\;\;f\in L^2((0,\infty)^m).$$

We define the Riesz transforms on $L^2((0,\infty)^m)$ (according to (\ref{I1}) and (\ref{I2})) associated with
$\frak{L}_\alpha(a)$, $\alpha\in(-1/2,\infty)^m$ and $a>0$ as follows: for every $j=1,2,...,m$,
$$R_{\alpha,j}(a)f
    =-2\sum_{k\in\N^m}{\sqrt{k_j}{{c_k^{\alpha}(a)(f)}\over{\sqrt{2(2\frak{s}(k)+\frak{s}(\alpha) +m)}}}\Phi_{k-e_j}^{\alpha+e_j}(\cdot;a)},\;\;f\in L^2((0,\infty)^m),$$
and
$$\widetilde {R}_{\alpha,j}(a)f
    =-2\sum_{k\in\N^m}{\sqrt{k_j+1}{{c_k^{\alpha}(a)(f)}\over{\sqrt{2(2\frak{s}(k)+\frak{s}(\alpha) +m)}}}\Phi_{k+e_j}^{\alpha-e_j}(\cdot;a)},
    \quad f\in L^2((0,\infty)^m), \ \alpha_j >1/2.$$

Note that, in virtue of Plancherel equality for Laguerre function spaces, we deduce that, for every $j=1,...,m$,  $R_{\alpha,j}(a)$ and $\widetilde {R}_{\alpha,j}(a)$
are bounded operators from $L^2((0,\infty)^m)$ into itself. Moreover, if $f\in span \{\Phi_k^{\alpha}(\cdot;a)\}_{k\in\N^m}$, the linear space generated  by $\{\Phi_k^{\alpha}(\cdot;a)\}_{k\in\N^m}$ , then $R_{\alpha,j}(a)f
    =A_{\alpha_j}(a)\frak{L}_{\alpha}^{-{1/2}}(a)f\;\;\mbox{and}\;\;\widetilde {R}_{\alpha,j}(a)f=A_{\alpha_j}^*(a)\frak{L}_{\alpha}^{-{1/2}}(a)f$ $j=1,...,m$. $L^p$-boundedness properties of Riesz transforms associated with Laguerre function
expansions have been established in  \cite{HTV2} and \cite{NoSt1}, among others.

The above comments suggest to define Riesz transforms $R_\alpha,j$ and $\widetilde{R}_\alpha,j$ in Bessel-Grushin settings as follows
$$R_{\alpha,j}(f)(x,y)
    =\mathcal{F}_2^{-1}(R_{\alpha,j}(|u|)(\mathcal{F}_2(f))(x,u))(y),\;\;\;f\in L^2((0,\infty)^m\times\R),$$
and
$$\widetilde {R}_{\alpha,j}(f)(x,y)
    =\mathcal{F}_2^{-1}(\widetilde {R}_{\alpha,j}(|u|)(\mathcal{F}_2(f))(x,u))(y),\;\;\;f\in L^2((0,\infty)^m\times\R),$$
where $\alpha\in(-1/2,\infty)^m$ and $j=1,...,m$. Then, Plancherel theorem for Fourier transform implies that $R_{\alpha,j}$ and $\widetilde{R}_{\alpha,j}$, $j=1,...,m$, are bounded operators from $L^2((0,\infty)^m\times\R^n)$ into itself.

We prove the following result.

\begin{Th}\label{Th1.2}
    Let $\alpha\in[1/2,\infty)^m$, $j=1,...,m$ and $1<p<\infty$. Then, the Riesz transforms $R_{\alpha,j}$ and $\widetilde{R}_{\alpha,j}$ are bounded operators from $L^p((0,\infty)^m\times\R)$ into itself.
\end{Th}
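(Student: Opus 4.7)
The plan is to view $R_{\alpha,j}$, up to the partial Fourier transform $\mathcal F_2$ in the $y$-variable, as an operator-valued Fourier multiplier on $L^p(\R;L^p((0,\infty)^m))$ with symbol $u\mapsto R_{\alpha,j}(|u|)$, and to invoke the operator-valued Mikhlin--Fourier multiplier theorem of Weis. Since $L^p((0,\infty)^m)$ is a UMD space for $1<p<\infty$, it suffices to prove that the two families
$$\bigl\{R_{\alpha,j}(|u|):u\in\R\setminus\{0\}\bigr\}\qquad\text{and}\qquad \bigl\{u\,\tfrac{d}{du}R_{\alpha,j}(|u|):u\in\R\setminus\{0\}\bigr\}$$
are $R$-bounded in $\mathcal B(L^p((0,\infty)^m))$. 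Fubini identifies $L^p(\R;L^p((0,\infty)^m))$ with $L^p((0,\infty)^m\times\R)$, so the resulting estimate is precisely Theorem \ref{Th1.2}. The argument for $\widetilde R_{\alpha,j}$ is entirely analogous, with $A_{\alpha_j}^*(a)$ in place of $A_{\alpha_j}(a)$.

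\textbf{Reduction to the scale $a=1$.} The first step is the dilation identity $R_{\alpha,j}(a)=T_a R_{\alpha,j}(1) T_a^{-1}$, where $T_a f(x)=a^{m/4}f(\sqrt a\,x)$. This follows at once from the two scaling relations $A_{\alpha_j}(a)=\sqrt a\,T_a A_{\alpha_j}(1) T_a^{-1}$ (chain rule) and $\frak L_\alpha^{-1/2}(a)=a^{-1/2}T_a\frak L_\alpha^{-1/2}(1)T_a^{-1}$ (immediate from the eigenvalue identity $\frak L_\alpha(a)\Phi_k^\alpha(\cdot;a)=2(2\frak s(k)+\frak s(\alpha)+m)a\,\Phi_k^\alpha(\cdot;a)$); the two scalar factors $\sqrt a$ and $a^{-1/2}$ cancel. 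The $L^p$-isometric dilation is $\widetilde T_a f(x)=a^{m/(2p)}f(\sqrt a\,x)$, and $T_a=a^{m/4-m/(2p)}\widetilde T_a$; the scalar prefactor drops out under conjugation. In particular, the known $L^p$-boundedness of the Laguerre Riesz transform $R_{\alpha,j}(1)$ on $L^p((0,\infty)^m)$, established in \cite{HTV2,NoSt1}, immediately yields uniform $L^p$-boundedness of $\{R_{\alpha,j}(|u|)\}_{u\neq 0}$.

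\textbf{From uniform boundedness to $R$-boundedness.} The family $\{\widetilde T_a\}_{a>0}$ is an isometric one-parameter group on $L^p((0,\infty)^m)$ under the reparametrization $s=\log a$. The standard $R$-boundedness of a positive isometric $L^p$-dilation group, combined with Kahane's contraction principle, implies that for any fixed $R\in\mathcal B(L^p((0,\infty)^m))$ the family $\{\widetilde T_a R\widetilde T_a^{-1}\}_{a>0}$ is $R$-bounded; applied with $R=R_{\alpha,j}(1)$, this handles the first family. For the second, writing $\tau_s=T_{e^s}$ and differentiating, one finds $a\,\tfrac{d}{da}R_{\alpha,j}(a)=T_a\,[D,R_{\alpha,j}(1)]\,T_a^{-1}$, where $D=\tfrac12 x\cdot\nabla_x+\tfrac m4$ is the infinitesimal generator of $\{\tau_s\}$. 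Hence $u\,\tfrac{d}{du}R_{\alpha,j}(|u|)=(\operatorname{sgn}u)\,T_{|u|}[D,R_{\alpha,j}(1)]T_{|u|}^{-1}$, and the same conjugation-by-isometries argument reduces $R$-boundedness of the derivative family to $L^p$-boundedness of the single commutator $[D,R_{\alpha,j}(1)]$.

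\textbf{The main obstacle.} The remaining and main technical point is to prove that $[D,R_{\alpha,j}(1)]$ is bounded on $L^p((0,\infty)^m)$ for $1<p<\infty$. I would expand the commutator using $R_{\alpha,j}(1)=A_{\alpha_j}(1)\frak L_\alpha^{-1/2}(1)$ and the factorization $-B_{\alpha_j}+x_j^2=A_{\alpha_j}^*(1)A_{\alpha_j}(1)+2(\alpha_j+1)$, together with the ladder relations for $\Phi_k^\alpha$ recalled before (\ref{I1})--(\ref{I2}), to rewrite $[D,R_{\alpha,j}(1)]$ as a finite linear combination of operators of the form (Laguerre Riesz transform)$\circ$(bounded spectral multiplier of $\frak L_\alpha(1)$) and (first-order ladder operator)$\circ\frak L_\alpha^{-s}(1)$ for suitable $s\geq 1/2$. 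Each such piece is $L^p$-bounded: the Riesz transform factors by \cite{HTV2,NoSt1}, and the spectral multipliers of $\frak L_\alpha(1)$ by standard subordination / functional calculus. Once this commutator bound is established, both $R$-boundedness assumptions of Weis's theorem are verified, and Theorem \ref{Th1.2} follows; the case of $\widetilde R_{\alpha,j}$ is handled identically.
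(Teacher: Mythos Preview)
Your overall strategy---Weis's multiplier theorem applied to $u\mapsto R_{\alpha,j}(|u|)$---matches the paper's, and the scaling identity $R_{\alpha,j}(a)=T_aR_{\alpha,j}(1)T_a^{-1}$ is correct and also underlies the paper's argument. The gap is in your passage from uniform boundedness to $R$-boundedness. The dilation group $\{\widetilde T_a\}_{a>0}$ is \emph{not} $R$-bounded on $L^p((0,\infty)^m)$ for any $p\neq 2$: with $m=1$, $f_j=\chi_{[1,2]}$ and $a_j=4^j$ ($j=1,\dots,N$), the functions $\widetilde T_{a_j}f_j$ have pairwise disjoint supports, so $\big\|\big(\sum_j|\widetilde T_{a_j}f_j|^2\big)^{1/2}\big\|_p=N^{1/p}\|\chi_{[1,2]}\|_p$ while $\big\|\big(\sum_j|f_j|^2\big)^{1/2}\big\|_p=N^{1/2}\|\chi_{[1,2]}\|_p$, and the ratio is unbounded for $p<2$; duality then rules out $p>2$ as well. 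Consequently the assertion that $\{\widetilde T_aR\widetilde T_a^{-1}\}_{a>0}$ is $R$-bounded for every fixed bounded $R$ is false, and with it both of your $R$-boundedness conclusions---for $\{R_{\alpha,j}(|u|)\}$ directly, and for the derivative family via the (correct) identity $a\frac{d}{da}R_{\alpha,j}(a)=T_a[D,R_{\alpha,j}(1)]T_a^{-1}$.

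The paper obtains $R$-boundedness by a different mechanism. Scaling at the kernel level, $R_{\alpha,j}(x,y;a)=a^{m/2}R_{\alpha,j}(\sqrt a\,x,\sqrt a\,y;1)$, transports the standard Calder\'on--Zygmund size and gradient bounds of \cite{NoSt1} into estimates that are \emph{uniform in $a$}; a vector-valued Calder\'on--Zygmund theorem then yields the $\ell^2$-square-function inequality \eqref{eq2.22}, which on $L^p$ is equivalent to $R$-boundedness. For the derivative family the paper does not argue via the commutator $[D,R_{\alpha,j}(1)]$ but perturbatively: it writes $R_{\alpha,j}(a)=R_j(a)+(R_{\alpha,j}(a)-R_j(a))$ with $R_j(a)$ the Hermite Riesz transform, imports the differentiability and kernel bounds for $a\frac{d}{da}R_j(a)$ from \cite{JST}, and controls the Laguerre--Hermite difference through detailed pointwise heat-kernel comparisons (Lemmas~\ref{Lem W} and~\ref{Lem G}), again producing $a$-uniform CZ bounds. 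Your commutator route could perhaps be completed, but only after you supply an $a$-independent argument---for instance uniform CZ estimates for the kernel of $a\frac{d}{da}R_{\alpha,j}(a)$---that does not appeal to $R$-boundedness of the dilation group; that essentially brings you back to the paper's work.
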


In order to prove this theorem we start using the main idea in the proof of \cite[Theorem 1.1]{JST},
namely, we see $R_{\alpha,j}$ and $\widetilde{R}_{\alpha,j}$, $j=1,...,m$, as Banach valued Fourier multipliers and then we use
the celebrated Weis' multiplier result \cite[Theorem 3.4]{We}. But to show that the R-boundedness properties hold for the
family of operators which appear in the Bessel-Grushin context, we can not proceed as in \cite[Section 2]{JST}
because Laguerre functions have not as nice operational properties  as Hermite functions. Roughly speaking
we take advantage that the operators we need to study are bounded perturbations of those operator handled in \cite{JST}.

Throughout this paper we always denote by $c$ and $C$ positive constants that can change from one line to the other.

\section{Proof of Theorem \ref{Th1.1}} \label{sec:2}

The strategy of the proof of this theorem is the same as in \cite{MaSi} (see also \cite{CS}, \cite{MM} and \cite{Sik})
and the key result is a weighted Plancherel inequality. Laguerre expansions play an important role and we need to get estimations
involving Laguerre functions.

The Bessel-Grushin operator $G_{\alpha}$ is selfadjoint and positive in $L^2((0,\infty)^m\times\R^n)$.
Then, $-G_{\alpha}$ generates a semigroup of contractions $\{e^{-tG_{\alpha}}\}_{t >0}$ in $L^2((0,\infty)^m\times\R^n)$.
Moreover, since $G_{\alpha}-G=\sum_{j=1}^m{(\alpha_j^2-1/4)/x_j^2}$, by using the perturbation formula (see \cite[Corollary 1.7, p. 161]{EN})  we get
\begin{equation}\label{eqB.1}
    e^{-tG}f-e^{-tG_{\alpha}}f
        =\int_0^t{e^{-(t-s)G}}\;\sum_{j=1}^m{{{\alpha_j^2-1/4}\over{x_j^2}}\;e^{-sG_{\alpha}}f\;ds}.
\end{equation}
Here and in the sequel, we identify each measurable function $f$ on $(0,\infty)^m \times \R^n$ with the function $f_0$ defined by
$$f_0(x,y)=\left\{
    \begin{array}{rl}
        f(x,y), & x\in(0,\infty)^m, \\
        0, & x\notin \R^m\setminus\{(0,\infty)^m\}.
    \end{array}\right., \quad  y \in \R^n.$$
From \eqref{eqB.1} we deduce that $e^{-tG_{\alpha}}f\leq e^{-tG}f$, $0\leq f\in L^2((0,\infty)^m\times \R^n)$ and $t >0$.

According to \cite[Proposition 3]{MaSi} there exists a distance $\rho$ in ${\R}^m\times\R^n$ such that the triple
$({\R}^m\times\R^n,\rho ,|\cdot|)$, where $|\cdot|$ denotes the Lebesgue measure in ${\R}^m\times\R^n$, is a homogeneous type space
(in the sense of Coifman and Weiss \cite{CW}), and that
\begin{equation*}\label{eqB.2}
    0\leq \W_t((x_1,y_1),(x_2,y_2))
        \leq C \frac{e^{-c\rho((x_1,y_1),(x_2,y_2))^2/t}}{|B_\rho((x_2,y_2),\sqrt{t})|}, \quad  (x_j,y_j)\in{\R}^m\times\R^n,\;j=1,2,\;\mbox{and}\;t >0,
\end{equation*}
where, for every $t>0$, $\W_t$ represents the integral kernel of $e^{-tG}$.

Hence, for every $t>0$, the operator $e^{-tG_{\alpha}}$ is bounded from $L^1((0,\infty)^m\times\R^n)$ into
$L^q((0,\infty)^m\times\R^n)$, $1\leq q <\infty$. Then, for every $t>0$,
\begin{equation}\label{eqB.3}
    e^{-tG_{\alpha}}(f)(x_1,y_1)
        =\int_{(0,\infty)^m\times\R^n}{\W_t^\alpha((x_1,y_1),(x_2,y_2))f(x_2,y_2)dx_2dy_2}, \quad f\in L^2((0,\infty)^m\times\R^n),
\end{equation}
and
$$0\leq \W_t^{\alpha}((x_1,y_1),(x_2,y_2))
    \leq C \frac{e^{-c\rho((x_1,y_1),(x_2,y_2))^2/t}}{|B_\rho((x_2,y_2),\sqrt{t})|}, \quad  (x_j,y_j)\in(0,\infty)^m\times\R^n,\;j=1,2,\mbox{and}\;t >0.$$

By defining $e^{-tG_{\alpha}}$, $t >0$, by \eqref{eqB.3} on $L^p((0,\infty)^m\times\R^n)$, $\{e^{-tG_{\alpha}}\}_{t>0}$
is a bounded semigroup on $L^p((0,\infty)^m\times\R^n)$, for every $1\leq p<\infty$.

Moreover, by \cite[Proposition 1.4]{Ouh} the semigroup $\{e^{-tG_{\alpha}}\}_{t>0}$ is bounded and holomorphic in
$L^2((0,\infty)^m\times\R^n)$ with angle $\pi/2$. According to \cite[Theorem 2.4]{Ouh}, for every
$(x_j,y_j)\in(0,\infty)^m\times\R^n,\;j=1,2$, the integral kernel $\W_t^{\alpha}((x_1,y_1),(x_2,y_2))$, $t>0$,
can be extended to an holomorphic function $\W_z^{\alpha}((x_1,y_1),(x_2,y_2))$, $Re(z)>0$.

By proceeding as in the proof of \cite[Lemmas 2.1 and 4.1]{DOS} and \cite[Lemma 3.3]{Sik}, for instance, we can show the following
properties of the integral heat kernel $\W_z^{\alpha}$, $Re(z)>0$.

\begin{Lem}\label{Lem2.1}
    Let $\alpha\in[1/2,\infty)^m$. Then,
    \begin{enumerate}
    \item[(a)] For every $(x_1,y_1)\in(0,\infty)^m\times\R^n$ and $t ,r>0$,
    $$\int_{((0,\infty)^m\times\R^n)\setminus B_\rho((x_1,y_1),r)}{|\W_t^{\alpha}((x_1,y_1),(x_2,y_2))|^2dx_2dy_2}
        \leq C \frac{e^{-r^2/ t}}{|B_\rho((x_1,y_1),\sqrt{t})|}.$$
    and
    \begin{align*}
    \|\W_t^{\alpha}((x_1,y_1),\cdot)\|^2_{L^2((0,\infty)^m\times\R^n)}= & \|\W_t^{\alpha}(\cdot,(x_1,y_1))\|^2_{L^2((0,\infty)^m\times\R^n)}
     \leq {C\over{|B_\rho((x_1,y_1),\sqrt{t})|}}.
    \end{align*}
    \item[(b)] For every $s>0$, $\mu\in\R$, $R>0$ and $(x_1,y_1)\in(0,\infty)^m\times\R^n$,
    \begin{align*}
    & \int_{(0,\infty)^m\times\R^n}{|\W_{(1+i\mu)R^{-2}}^{\alpha}((x_1,y_1),(x_2,y_2))|^2[\rho((x_1,y_1),(x_2,y_2))]^sdx_2dy_2}
    \leq C{{R^{-s}(1+|\mu|)^s}\over{|B_\rho((x_1,y_1),{1/ R})}|}.
    \end{align*}
    \end{enumerate}
\end{Lem}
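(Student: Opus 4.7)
The proof fits the familiar pattern for off-diagonal and weighted Plancherel estimates on a doubling metric measure space with a heat kernel enjoying a Gaussian upper bound, exactly as in \cite[Lemmas 2.1 and 4.1]{DOS} and \cite[Lemma 3.3]{Sik}. The ingredients already at my disposal are: the pointwise Gaussian upper bound on $\W_t^{\alpha}$ for real $t > 0$ stated just above the lemma; the self-adjointness of $G_{\alpha}$; the doubling property of $({\R}^m \times \R^n, \rho, |\cdot|)$ inherited from \cite[Proposition 3]{MaSi}; and Ouhabaz's holomorphic extension of the kernel to complex times \cite[Theorem 2.4]{Ouh}. The task is to combine these in the correct order.

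For part (a), I would first establish the $L^2$-norm bound by noting, via self-adjointness and the semigroup property, that
$$\|\W_t^{\alpha}((x_1,y_1),\cdot)\|_{L^2}^2 = \W_{2t}^{\alpha}((x_1,y_1),(x_1,y_1)),$$
and then applying the pointwise Gaussian bound at the diagonal. For the off-diagonal integral, I would split $e^{-c\rho^2/t} \leq e^{-cr^2/(2t)} e^{-c\rho^2/(2t)}$ on $\{\rho \geq r\}$ and use the pointwise Gaussian bound to majorize
$$|\W_t^{\alpha}((x_1,y_1),(x_2,y_2))|^2 \leq |\W_t^{\alpha}((x_1,y_1),(x_2,y_2))| \cdot \frac{C\, e^{-c\rho^2/t}}{|B_{\rho}((x_2,y_2),\sqrt{t})|}.$$
Doubling converts $|B_{\rho}((x_2,y_2),\sqrt{t})|^{-1}$ into $|B_{\rho}((x_1,y_1),\sqrt{t})|^{-1}$ at the cost of a polynomial in $\rho/\sqrt{t}$ that is absorbed by the remaining $e^{-c\rho^2/(2t)}$; what survives outside the integral is $Ce^{-cr^2/(2t)}/|B_{\rho}((x_1,y_1),\sqrt{t})|$, while the inside is $\int |\W_t^{\alpha}((x_1,y_1),\cdot)| \, dx_2 dy_2 \leq C$, a bound that already follows from the Gaussian upper bound together with doubling.

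For part (b) I would use a dyadic annular decomposition driven by (a). Set $A_0 = B_{\rho}((x_1,y_1), 1/R)$ and $A_k = B_{\rho}((x_1,y_1), 2^k/R) \setminus B_{\rho}((x_1,y_1), 2^{k-1}/R)$, $k \geq 1$, so that $\rho^s \leq (2^k/R)^s$ on $A_k$. The key input is the complex-time analogue of (a),
$$\int_{\{\rho \geq r\}} |\W_{(1+i\mu)R^{-2}}^{\alpha}((x_1,y_1),(x_2,y_2))|^2 dx_2 dy_2 \leq \frac{C}{|B_{\rho}((x_1,y_1),1/R)|} \exp\!\left(-\frac{c r^2 R^2}{1+\mu^2}\right),$$
obtained by repeating the argument of (a) with the holomorphic Gaussian estimate in place of the real one. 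Applying it on each $A_k$ with $r = 2^{k-1}/R$ and $t$ replaced by $(1+|\mu|^2)R^{-2}$, summing, and using the convergent geometric-type bound
$$\sum_{k \geq 0} 2^{ks} \exp\!\left(-\frac{c 2^{2k}}{1+\mu^2}\right) = O\bigl((1+|\mu|)^s\bigr),$$
which is verified by splitting the sum at $k \approx \log_2(1+|\mu|)$, yields the claimed estimate.

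The main technical obstacle is precisely the complex-time off-diagonal bound invoked in the last paragraph: whereas (a) rests on the purely geometric combination of the real Gaussian bound with doubling, its complex analogue requires Ouhabaz's holomorphic extension of the kernel and the associated Gaussian-type estimate in the sector $|\arg z| < \pi/2$, with the $(1+|\mu|)$-dependence tracked carefully so as to yield the sharp factor $(1+|\mu|)^s$ in the final estimate. Once that complex Gaussian bound is in hand, the reduction to the real case is elementary and essentially identical to the argument in \cite[Lemma 4.1]{DOS} and \cite[Lemma 3.3]{Sik}.
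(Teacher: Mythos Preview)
Your proposal is correct and follows essentially the same approach as the paper, which does not give a detailed proof but simply refers to \cite[Lemmas 2.1 and 4.1]{DOS} and \cite[Lemma 3.3]{Sik}. The ingredients you identify (the real-time Gaussian bound, doubling, the semigroup identity for the $L^2$ norm, Ouhabaz's holomorphic extension for the complex-time estimate, and the dyadic annular decomposition with the sum $\sum_k 2^{ks} e^{-c4^k/(1+\mu^2)} = O((1+|\mu|)^s)$) are precisely those underlying the cited arguments.
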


Let $R>0$. As it was commented above the operator $e^{-R^{-2}G_\alpha}$ is bounded from
$L^1((0,\infty)^m\times\R^n)$ into $L^2((0,\infty)^m\times\R^n)$.
Suppose that $F$ is a bounded measurable function on $\R$ such that $supp\;F\subset[0,R^2]$. We define $H_2(\lambda)=e^{-\lambda R^{-2}}$, $\lambda\in\R$,
and $H_1={F/{H_2}}$. It is clear that $H_2(G_{\alpha})= e^{-R^{-2}G_\alpha}$
and that $H_1(G_{\alpha})$ is a bounded operator from
$L^2((0,\infty)^m\times\R^n)$ into itself with
$$\|H_1(G_{\alpha})\|_{L^2((0,\infty)^m\times\R^n)\rightarrow L^2((0,\infty)^m\times\R^n)}
    \leq\|H_1\|_{L^{\infty}(0,\infty)}\leq e \|F\|_{L^{\infty}(0,\infty)}.$$
Hence, the operator $F(G_{\alpha})$ is associated to the kernel
$$K_{F(G_{\alpha})}((x_1,y_1),(x_2,y_2))
    =H_1(G_{\alpha})[ \W_{R^{-2}}^{\alpha}(\cdot,(x_2,y_2))](x_1,y_1),\;\;(x_j,y_j)\in(0,\infty)^m\times\R^n,\;\;j=1,2.$$
Then, Lemma \ref{Lem2.1}, (a), leads to
$$\|K_{F(G_{\alpha})}((x_1,y_1),\cdot)\|^2_{L^2((0,\infty)^m\times\R^n)}
    \leq C \frac{\|F\|_{L^{\infty}(0,\infty)}^2}{|B_\rho((x_1,y_1),1/R)|}.$$

The arguments presented in the proof of \cite[Lemma 3.5]{Sik} (see also \cite[Lemma 4.3, (a)]{DOS}) allow us to establish the
following result.

\begin{Lem}\label{Lem2.2}
    Let $\alpha \in (-1/2,\infty)^m$ and $R,s>0$. For every $\varepsilon >0$ there exists $C_{\varepsilon} >0$ such that
    \begin{align*}
        & \int_{(0,\infty)^m\times\R^n}{|K_{F(G_{\alpha})}((x_1,y_1),(x_2,y_2))|^2[1+R\rho((x_1,y_1),(x_2,y_2))]^sdx_2dy_2}
        \leq C_{\varepsilon} \frac{\|{\delta}_{R^2}F\|^2_{W^{\infty}_{{s/ 2}+\varepsilon}}}{|B_\rho((x_1,y_1),{1/ R})|},
    \end{align*}
    for every bounded measurable function $F$ such that $supp\;F\subset[0,R^2]$.
\end{Lem}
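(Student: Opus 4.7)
The argument follows the standard Calder\'on-Zygmund strategy for spectral multipliers (cf.\ \cite[Lemma 4.3]{DOS} and \cite[Lemma 3.5]{Sik}), with the main analytic input being the complex-time weighted Plancherel estimate from Lemma \ref{Lem2.1}(b) combined with a Fourier representation of $F$ in the spectral variable. The core idea is to express $F(G_\alpha)$ as a superposition of complex heat operators $e^{-(1+i\mu)R^{-2}G_\alpha}$, so that the Fourier-side decay of a suitably modified multiplier translates, through Lemma \ref{Lem2.1}(b), into the desired spatial $L^2$-decay of $K_{F(G_\alpha)}$.

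Concretely, I would set $H(\lambda)=F(\lambda)e^{\lambda/R^2}$, which is still supported in $[0,R^2]$ and whose $W^\infty_{s/2+\varepsilon}$-norm is comparable to that of $F$ (multiplication by the smooth bounded factor $e^{\lambda/R^2}$). Since $F(\lambda)=H(\lambda)e^{-\lambda/R^2}$, Fourier inversion for $H$ followed by a change of the integration variable yields
$$F(\lambda)=R^{-2}\int_\R \hat H(-\mu/R^2)\, e^{-(1+i\mu)R^{-2}\lambda}\, d\mu,$$
so by the spectral theorem
$$K_{F(G_\alpha)}((x_1,y_1),(x_2,y_2))=R^{-2}\int_\R \hat H(-\mu/R^2)\, \W^\alpha_{(1+i\mu)R^{-2}}((x_1,y_1),(x_2,y_2))\, d\mu.$$
Applying Minkowski's integral inequality in $(x_2,y_2)$, splitting $[1+R\rho]^s\le C(1+(R\rho)^s)$, and invoking Lemma \ref{Lem2.1}(b) (together with its $s=0$ version coming from the unweighted $L^2$-bound for the complex heat kernel) would give, after a rescaling $\sigma=R^2\nu$ that converts the problem to the compactly supported multiplier $\tilde H(\lambda):=\delta_{R^2}F(\lambda)\,e^\lambda$ on $[0,1]$,
$$\Big(\int |K_{F(G_\alpha)}|^2[1+R\rho]^s\,dx_2dy_2\Big)^{1/2}\le\frac{C}{|B_\rho((x_1,y_1),1/R)|^{1/2}}\int_\R |\hat{\tilde H}(\sigma)|(1+|\sigma|)^{s/2}\,d\sigma.$$

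It then remains to bound $\int_\R|\hat{\tilde H}(\sigma)|(1+|\sigma|)^{s/2}\,d\sigma$ by $C_\varepsilon\|\delta_{R^2}F\|_{W^\infty_{s/2+\varepsilon}}$. A Cauchy-Schwarz with weight $(1+|\sigma|)^{-1-2\delta}$ first controls this integral by $C_\delta\|\tilde H\|_{H^{(s+1)/2+\delta}}$, and since $\tilde H$ has compact support in $[0,1]$, $L^2$- and $L^\infty$-based Sobolev norms are equivalent on its support; together with the fact that $\tilde H$ and $\delta_{R^2}F$ differ only by the smooth bounded factor $e^\lambda$, this gives the bound with $\varepsilon>1/2$. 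The sharper conclusion with \emph{every} $\varepsilon>0$, which is the main technical obstacle, is recovered by absorbing the resulting half-derivative gap via a Besov-type embedding on compactly supported functions, as executed in the references cited. A secondary matter is justifying Fourier inversion when $F$ is merely a bounded Borel function; this is handled by standard mollification and passage to the limit, using the uniform bound $\|H_1(G_\alpha)\|_{L^2\to L^2}\le e\|F\|_\infty$ noted just before the lemma.
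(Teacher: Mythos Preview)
Your proposal is correct and follows precisely the route the paper indicates: the paper does not give a self-contained proof of Lemma~\ref{Lem2.2} but simply refers to \cite[Lemma~3.5]{Sik} and \cite[Lemma~4.3(a)]{DOS}, and what you have written is a faithful outline of that argument (Fourier representation of $F(G_\alpha)$ through the complex heat semigroup, Minkowski plus Lemma~\ref{Lem2.1}(b), rescaling to $\delta_{R^2}F$, and the final Sobolev/Besov step to close the $\varepsilon$-gap). Your two caveats---the half-derivative loss from the naive Cauchy--Schwarz, which is eliminated by the interpolation/embedding argument in the cited references, and the mollification needed to justify Fourier inversion for merely bounded Borel $F$---are exactly the points those references address, so deferring to them there is appropriate.
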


We now define, for $j=1,...,m$ and $l=1,...,n$, the operators
$$M_jf(x,y)
    =x_jf(x,y), \quad f\in D(M_j)=\{g\in L^2((0,\infty)^m\times\R^n),\;\;x_jg\in L^2((0,\infty)^m\times\R^n)\},$$

$$\mathcal{D}_lf
    ={\mathcal{F}}_2^{-1}(-iu_l\mathcal{F}_2(f)(u)), \quad f\in D(\mathcal{D}_l )=\{g\in L^2((0,\infty)^m\times\R^n),\;\;u_l\mathcal{F}_2(g)\in L^2((0,\infty)^m\times\R^n)\}. $$

As it was done in \cite{MaSi}, we denote by $|M|$ the operator of multiplication by $|x|$, and  $|\mathcal{D}|$ represents the operator $\displaystyle\left(-\sum_{j=1}^n{{\partial^2}\over{\partial y_j^2}}\right)^{1/2}$.

It is clear that $|M|$ is a positive and selfadjoint operator. Moreover, we have that
$$|\mathcal{D}|f={\mathcal{F}}_2^{-1}(|u|\mathcal{F}_2(f)(u)), \;\;\;f\in D(|\mathcal{D}|).$$
Then,  $|\mathcal{D}|^df={\mathcal{F}}_2^{-1}(|u|^d\mathcal{F}_2(f)(u))$, $f\in D(|\mathcal{D}|^d)$, and $d\in\N$.
Also we define the operator $\mathcal{S}$ by
$$\mathcal{S}f(x,y)
    = |x|\mathcal{F}_2^{-1}\Big( |u| \mathcal{F}_2\Big(f(x,\cdot) \Big)(u) \Big)(y),$$
for every $f \in D(\mathcal{S})=\{f \in L^2((0,\infty)^m\times \mathbb{R}^n) : |x||u| \mathcal{F}_2(f(x,\cdot))(u) \in L^2((0,\infty)^m \times \mathbb{R}^n)\}$.
Note that $D(|M||\mathcal{D}|) \subsetneq D(\mathcal{S})$.

Next result is a version of \cite[Proposition 4]{MaSi} in our setting.

\begin{Lem}\label{Lem p.5}
    Let $\alpha\in (-1/2,\infty)^m$ and $\gamma>0$. Then,
    \begin{equation}\label{eqB.5.1}
        \||M|^{\gamma}f\|_{L^2((0,\infty)^m\times \R^n)}
            \leq C \|G_\alpha^{\gamma/2}|\mathcal{D}|^{-\gamma} f\|_{L^2((0,\infty)^m\times \R^n)},
    \end{equation}
    for every $f \in \textit{Ran}(|\mathcal{D}|^\gamma)$, the range of $|\mathcal{D}|^\gamma$, such that
    $|\mathcal{D}|^{-\gamma} f \in D(G_\alpha^{\gamma/2})$.
\end{Lem}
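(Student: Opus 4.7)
My plan is to reduce \eqref{eqB.5.1} by Fourier transform and scaling to a scale-free inequality for the standard Laguerre operator $\frak{L}_\alpha$, and then to establish that inequality via a form-level bound for $\gamma=1$ extended by the Heinz-Loewner inequality, together with a three-term recurrence plus complex interpolation for large $\gamma$. First, applying Plancherel's theorem to $\mathcal{F}_2$ — which intertwines $G_\alpha$ with $\frak{L}_\alpha(|u|)$, commutes with $|M|^\gamma$, and converts $|\mathcal{D}|^{-\gamma}$ into multiplication by $|u|^{-\gamma}$ — reduces \eqref{eqB.5.1} to
\begin{equation*}
\int_{\R^n}\||x|^\gamma\mathcal{F}_2 f(\cdot,u)\|_{L^2_x}^2\,du \le C\int_{\R^n}|u|^{-2\gamma}\|\frak{L}_\alpha(|u|)^{\gamma/2}\mathcal{F}_2 f(\cdot,u)\|_{L^2_x}^2\,du.
\end{equation*}
Fixing $u$ and writing $a=|u|$, the change of variable $z=\sqrt{a}\,x$, combined with the identity $\Phi_k^\alpha(\cdot;a)=a^{m/4}\Phi_k^\alpha(\sqrt{a}\,\cdot)$ and the eigenvalue relation $\frak{L}_\alpha(a)\Phi_k^\alpha(\cdot;a)=\lambda_k\,a\,\Phi_k^\alpha(\cdot;a)$ with $\lambda_k=2(2\frak{s}(k)+\frak{s}(\alpha)+m)$, cancels the $a$-dependence on both sides and reduces the task to the scale-free estimate
\begin{equation*}
\||z|^\gamma h\|_{L^2((0,\infty)^m)} \le C\,\|\frak{L}_\alpha^{\gamma/2} h\|_{L^2((0,\infty)^m)},\qquad h\in \mathrm{span}\{\Phi_k^\alpha\}_{k\in\N^m}.
\end{equation*}

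For $h\in C_c^\infty((0,\infty)^m)$ the classical Hardy inequality $\int_0^\infty|f'|^2\ge\tfrac14\int_0^\infty|f|^2/x^2$, applied in each coordinate, gives $\|\partial_{x_j}h\|^2+\int(\alpha_j^2-1/4)x_j^{-2}|h|^2 \ge \int \alpha_j^2 x_j^{-2}|h|^2 \ge 0$, valid whenever $\alpha_j>-1/2$. Summing in $j$ and adding $\||x|h\|^2$ I obtain $\langle \frak{L}_\alpha h, h\rangle \ge \||x|h\|_{L^2}^2$, that is $|x|^2\le\frak{L}_\alpha$ in the form sense; the Heinz-Loewner inequality for positive selfadjoint operators then yields $|x|^{2p}\le C\,\frak{L}_\alpha^p$ for every $0\le p\le 1$, which covers the full range $0<\gamma\le 2$.

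For $\gamma>2$ the Heinz-Loewner inequality fails, and the precise structure of the Laguerre basis must be exploited. The three-term recurrence
\begin{equation*}
x^2\varphi_k^\beta=-\sqrt{(k+1)(k+\beta+1)}\,\varphi_{k+1}^\beta+(2k+\beta+1)\varphi_k^\beta-\sqrt{k(k+\beta)}\,\varphi_{k-1}^\beta,
\end{equation*}
derived from the classical recurrence for Laguerre polynomials, shows that $|x|^2\Phi_k^\alpha$ is a finite combination of $\Phi_l^\alpha$ with $|k-l|_\infty\le 1$ and coefficients of order $\lambda_k$. Iterating $N$ times, the matrix $M_{kl}=\langle |x|^{2N}\Phi_k^\alpha,\Phi_l^\alpha\rangle$ is banded with $|M_{kl}|\le C_N\min(\lambda_k^N,\lambda_l^N)$, so Schur's test applied to $\lambda_k^{-N}M_{kl}$ yields $\||x|^{2N}h\|\le C_N\|\frak{L}_\alpha^N h\|$ for every $N\in\N$. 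Stein's complex interpolation theorem applied to the analytic family $T_w=|x|^{2w}\frak{L}_\alpha^{-w}$ on the strip $0\le\mathrm{Re}\,w\le N$ — with boundary bound $1$ on $\mathrm{Re}\,w=0$ (composition of unitaries) and $C_N$ on $\mathrm{Re}\,w=N$ from the integer case — then delivers the desired estimate for all $\gamma=2\,\mathrm{Re}\,w\in(0,2N)$, hence for every $\gamma>0$ on letting $N\to\infty$. The main obstacle is precisely this regime $\gamma>2$: the noncommutativity of $|M|$ and $\frak{L}_\alpha$ shuts down any abstract operator-monotonicity argument past the Heinz-Loewner threshold, forcing one to exploit the banded structure of $|x|^{2N}$ in the Laguerre basis and to bridge to arbitrary $\gamma$ by an analytic-interpolation argument.
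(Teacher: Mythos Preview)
Your argument is correct, and it follows the same overall architecture as the paper---reduce to integer exponents, then interpolate---but the implementation differs in both stages. For the integer case the paper does not use the three-term recurrence at all: it instead observes that the Laguerre heat kernel is pointwise dominated by the Hermite heat kernel, so that $\frak{L}_\alpha^{-1}(|g|)\le C\,H^{-1}(|g_0|)$, and then invokes the known bound $\||x|^2 H^{-1}g\|_2\le C\|g\|_2$ from \cite{BT}; higher $d$ follow by an (unspecified) induction. Your recurrence-plus-Schur argument is more self-contained and makes the induction explicit, at the cost of a short computation. For the passage to general $\gamma$ the paper uses interpolation of domains of fractional powers (via boundedness of imaginary powers of $\mathcal S$ and $G_\alpha$) to obtain the inhomogeneous estimate $\|\mathcal S^\gamma f\|\le C(\|f\|+\|G_\alpha^{\gamma/2}f\|)$ and then removes the $\|f\|$ term by the built-in scaling of $G_\alpha$; your Stein interpolation on the analytic family $T_w=|x|^{2w}\frak{L}_\alpha^{-w}$ delivers the homogeneous bound directly. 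Two minor remarks: the Hardy/Heinz--Loewner step for $0<\gamma\le 2$ is a nice observation but redundant, since your recurrence argument already covers $N=1$ and hence (after interpolation) all $\gamma>0$; and the phrase ``letting $N\to\infty$'' should be read as ``choose $N>\gamma/2$ for the given $\gamma$'', which is what you mean.
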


\begin{proof}
    Our first objective is to show that, for every $d\in\N$,
    \begin{equation}\label{eqB.4}
        \|\mathcal{S}^{2d}f\|_{L^2((0,\infty)^m\times\R^n)}
            \leq C\|G_{\alpha}^df\|_{L^2((0,\infty)^m\times\R^n)}, \quad f\in D(G_{\alpha}^d).
    \end{equation}
    We consider the operator
    $${\mathcal{A}}_{\alpha}(f)(x,y)
        ={\mathcal{F}}_2^{-1}\left[\sum_{k\in\N^m}{{{c_k^{\alpha}({\mathcal{F}}_2(f);u)}\over{2(2\frak{s}(k)+\frak{s}(\alpha) +m)|u|}}\Phi_k^{\alpha}(x;|u|)}\right](y), \quad  f \in D({\mathcal{A}}_{\alpha}),$$
    being
    $$D({\mathcal{A}}_{\alpha})=\left\{g\in L^2((0,\infty)^m\times\R^n)\;:\;\sum_{k\in\N^m}{{{c_k^{\alpha}({\mathcal{F}}_2(f);u)}\over{2(2\frak{s}(k)+\frak{s}(\alpha) +m)|u|}}\Phi_k^{\alpha}(x;|u|)}\in L^2((0,\infty)^m\times\R^n)\right\}.$$
    Observe that, for every $f\in D({\mathcal{A}}_{\alpha})$, ${\mathcal{A}}_{\alpha}f\in D(G_{\alpha})$ and $G_{\alpha}{\mathcal{A}}_{\alpha}f=f$.
    Furthermore, if $f\in D(G_{\alpha})$, $G_{\alpha}f \in  D({\mathcal{A}}_{\alpha})$ and ${\mathcal{A}}_{\alpha}G_{\alpha}f=f$.

    We treat the case $d=1$. The inequality in \eqref{eqB.4} is now equivalent to the following one
    \begin{equation}\label{eqB.5}
        \|\mathcal{S}^{2}{\mathcal{A}}_{\alpha}f\|_{L^2((0,\infty)^m\times\R^n)}
            \leq C\|f\|_{L^2((0,\infty)^m\times\R^n)}, \quad f\in D({\mathcal{A}}_{\alpha}).
    \end{equation}
    According to Plancherel equality for the Fourier transform, \eqref{eqB.5} holds if, and only if,
    $D(G_\alpha) \subseteq D(\mathcal{S}^2)=\{ f \in L^2((0,\infty)^m\times \mathbb{R}^n) : |x|^2|u|^2 \mathcal{F}_2(f(x,\cdot))(u) \in L^2((0,\infty)^m \times \mathbb{R}^n) \}$ and
    the operator $T_{\alpha}$ defined by
    $$T_{\alpha}(f)(x,u)
        =\sum_{k\in\N^m}{{{|x|^2|u|c_k^{\alpha}({\mathcal{F}}_2(f);u)}\over{2(2\frak{s}(k)+\frak{s}(\alpha) +m)}}\Phi_k^{\alpha}(x;|u|)},\;\;f\in L^2((0,\infty)^m\times\R^n)$$
    is bounded from $L^2((0,\infty)^m\times\R^n)$ into itself.

    In order to show the $L^2$-boundedness property for $T_{\alpha}$ we consider the operator $\frak{L}_{\alpha}^{-1}$ defined by
    $$\frak{L}_{\alpha}^{-1}g
        =\sum_{k\in\N^m}{{{c_k^\alpha(g)}\over{2(2\frak{s}(k)+\frak{s}(\alpha) +m)}}\Phi_k^{\alpha}}, \quad g\in L^2((0,\infty)^m),$$
    where
    \begin{equation}\label{ck}
        c_k^\alpha(g)=\int_{(0,\infty)^m}{{\varphi}_k^{\alpha}(x)g(x)dx}, \quad k\in\N^m.
    \end{equation}

    The operator $|x|^2\frak{L}_\alpha^{-1}$ is bounded from $L^2((0,\infty)^m)$ into itself. Indeed, let $g \in L^2((0,\infty)^m)$.
    We define the function $g_0$ by
    $$g_0(x)=g(x), \ x \in (0,\infty)^m, \quad \text{and} \quad g_0(x)=0, \ x \in\R^m\setminus\{(0,\infty)^m\}.$$
    We have that $\frak{L}_\alpha^{-1}(|g|)(x) \leq C H^{-1}(|g_0|)$, $x \in (0,\infty)^m$, where $H$ represents the Hermite operator.
    Then, the $L^2$-boundedness of the operator $|x|^2\frak{L}_\alpha^{-1}$ follows from \cite[Lemma 3]{BT} (see also \cite{BT2}).
    We get
    \begin{align*}
        \|T_{\alpha}(f)\|^2_{L^2((0,\infty)^m\times\R^n)}
        & = \int_{\R^n}\int_{(0,\infty)^m}{\left|\sum_{k\in\N^m}{{{|x|^2|u|c_k^{\alpha}({\mathcal{F}}_2(f);u)}\over{2(2\frak{s}(k)+\frak{s}(\alpha) +m)}}\Phi_k^{\alpha}(x;|u|)}\right|^2dxdu} \\
        & = \int_{\R^n}\int_{(0,\infty)^m}{\left|\sum_{k\in\N^m}{{{|x|^2c_k^\alpha\Big({\mathcal{F}}_2(f)\Big(y/\sqrt{|u|},u\Big)\Big)}\over{2(2\frak{s}(k)+\frak{s}(\alpha) +m)}}\Phi_k^{\alpha}(x)}\right|^2dx{{du}\over{|u|^{m/2}}}} \\
        & = \int_{\R^n}\int_{(0,\infty)^m}{\left||x|^2\frak{L}_{\alpha}^{-1}\Big({\mathcal{F}}_2(f)\Big({y\over{\sqrt{|u|}}},u\Big)\Big)(x)\right|^2dx{{du}\over{|u|^{m/2}}}} \\
        & \leq C  \int_{\R^n}\int_{(0,\infty)^m}{\Big|{\mathcal{F}}_2(f)\Big({y\over{\sqrt{|u|}}},u\Big)\Big|^2dy{{du}\over{|u|^{m/2}}}} \\
        & \leq C\|f\|^2_{L^2((0,\infty)^m\times\R^n)}, \quad f\in L^2((0,\infty)^m\times\R^n).
    \end{align*}
    Note that $\mathcal{F}_2(f)(\cdot/\sqrt{|u|},u) \in L^2((0,\infty)^m)$, a.e. $u \in \mathbb{R}^n$,
    and then the coefficient $c_k^\alpha$ in the second equality above, which is given by \eqref{ck}, is understood as a function of $u$.
    On the other hand, the property $D(G_\alpha) \subseteq D(\mathcal{S}^2)$
    can be also  deduced from the previous argument.

    An inductive procedure allows to show that \eqref{eqB.4} is true for every $d \in \N$.
    The imaginary powers of the operators $\mathcal{S}$ and $G_\alpha$ are bounded in $L^2((0,\infty)^m\times \R^n)$ (see \cite[p. 640]{Me1} or \cite[Theorem B]{Ya}).
    By using \cite[Theorem 11.6.1]{MaCa} and \cite[Theorem 4.1.2]{BL} we get that there exists $C>0$
    such that
    $$ \|\mathcal{S}^{\gamma}f\|_{L^2((0,\infty)^m\times \R^n)}
        \leq C \Big( \|f\|_{L^2((0,\infty)^m\times \R^n)} + \|G_\alpha^{\gamma/2} f\|_{L^2((0,\infty)^m\times \R^n)}\Big), \quad  f \in D(G_\alpha^{\gamma/2}).$$
    In the usual way, the homogeneity allows us to obtain that
    $$ \|\mathcal{S}^{\gamma}f\|_{L^2((0,\infty)^m\times \R^n)}
        \leq C \|G_\alpha^{\gamma/2} f\|_{L^2((0,\infty)^m\times \R^n)}, \quad  f \in D(G_\alpha^{\gamma/2}).$$

    Since $|\mathcal{D}|^\gamma$ is an one to one operator, we deduce that \eqref{eqB.5.1} holds.
\end{proof}

\begin{Lem}\label{Lem p.6}
    Suppose that $H$ is a compactly supported Borel measurable complex function defined on $\R$.
    For every $f \in C_c^\infty((0,\infty)^m) \otimes C_c^\infty(\R^n)$, we have that
    $$H(G_\alpha)f(x,t)
        = \int_{\R^n} \int_{(0,\infty)^m} K_H^\alpha(y,z;x,t)f(y,z)dydz, \quad  x \in (0,\infty)^m \text{ and } t \in \R^n,$$
    being, for $ x,y \in (0,\infty)^m \text{ and } z,t \in \R^n$,
    $$K_H^\alpha(y,z;x,t)
        = \frac{1}{(2\pi)^n} \int_{\R^n} \sum_{k\in\N^m} H\Big(2(2\frak{s}(k)+\frak{s}(\alpha)+m)|u|\Big) \Phi_k^\alpha(x;|u|) \Phi_k^\alpha(y;|u|) e^{-iu\cdot(z-t)}du.$$
    Moreover, for $y \in (0,\infty)^m \text{ and } z \in \R^n$,
    \begin{align}\label{7.1}
        \| K_H^\alpha(y,z;\cdot ,\cdot) \|_{L^2((0,\infty)^m\times \R^n)}^2
            &= \int_{\R^n} \sum_{k\in\N^m} \Big|  H\Big(2(2\frak{s}(k)+\frak{s}(\alpha)+m)|u|\Big) \Phi_k^\alpha(y;|u|) \Big|^2  du.
    \end{align}
\end{Lem}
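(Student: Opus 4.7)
The plan is to prove both assertions by unwinding the definition of the spectral multiplier $H(G_\alpha)$ and applying Plancherel in two stages (once in the Fourier variable, once in the Laguerre variable).

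First, I would substitute into the definition of $H(G_\alpha)f$ the integral representations
$$
c_k^\alpha(\mathcal{F}_2 f;u)
  = \int_{(0,\infty)^m} \Phi_k^\alpha(y;|u|)\, \mathcal{F}_2(f)(y,u)\, dy, \quad
\mathcal{F}_2(f)(y,u) = \frac{1}{(2\pi)^{n/2}} \int_{\R^n} e^{-iu\cdot z} f(y,z)\, dz,
$$
and $\mathcal{F}_2^{-1}(g)(t)= (2\pi)^{-n/2}\int_{\R^n} e^{iu\cdot t} g(u)\, du$. The key remark is that, since $H$ is compactly supported in $[0,M]$ for some $M>0$, the condition $2(2\frak{s}(k)+\frak{s}(\alpha)+m)|u|\leq M$ forces $\frak{s}(k)\leq M/(4|u|)$, so for every $u\neq 0$ only \emph{finitely} many $k\in\N^m$ contribute. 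Hence the inner expression
$$
\mathcal{K}_H^\alpha(y,x;u)
  = \sum_{k\in\N^m} H\bigl(2(2\frak{s}(k)+\frak{s}(\alpha)+m)|u|\bigr) \Phi_k^\alpha(x;|u|) \Phi_k^\alpha(y;|u|)
$$
is a pointwise finite sum, which is precisely the integral kernel of the operator $H(\frak{L}_\alpha(|u|))$ on $L^2((0,\infty)^m)$ given by the spectral theorem applied to the orthonormal basis $\{\Phi_k^\alpha(\cdot;|u|)\}_{k\in\N^m}$.

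Next, I would apply Fubini to exchange the order of the integrals in $y$, $z$ and $u$. Absolute integrability follows from the compact supports of $f$ and $H$ together with the $L^2$-estimate on $\mathcal{K}_H^\alpha(y,\cdot;u)$ derived below; this yields
$$
H(G_\alpha)f(x,t)
  = \int_{\R^n} \int_{(0,\infty)^m} K_H^\alpha(y,z;x,t)\, f(y,z)\, dy\, dz,
$$
with $K_H^\alpha(y,z;x,t)= (2\pi)^{-n}\int_{\R^n} \mathcal{K}_H^\alpha(y,x;u)\, e^{-iu\cdot(z-t)} du$, matching the claimed formula.

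For the Plancherel identity \eqref{7.1}, I would view the map $t\mapsto K_H^\alpha(y,z;x,t)$ as (a constant multiple of) the inverse Fourier transform, in $u$, of $e^{-iu\cdot z}\mathcal{K}_H^\alpha(y,x;u)$. Applying Plancherel in the $t$-variable and integrating over $x\in(0,\infty)^m$ gives
$$
\|K_H^\alpha(y,z;\cdot,\cdot)\|_{L^2((0,\infty)^m\times\R^n)}^2
  = C \int_{\R^n} \int_{(0,\infty)^m} |\mathcal{K}_H^\alpha(y,x;u)|^2\, dx\, du.
$$
Then for each fixed $u\neq 0$, since $\{\Phi_k^\alpha(\cdot;|u|)\}_k$ is an orthonormal basis of $L^2((0,\infty)^m)$, Parseval's identity applied to the finite expansion defining $\mathcal{K}_H^\alpha(y,\cdot;u)$ yields
$$
\int_{(0,\infty)^m} |\mathcal{K}_H^\alpha(y,x;u)|^2\, dx
  = \sum_{k\in\N^m} |H(2(2\frak{s}(k)+\frak{s}(\alpha)+m)|u|)|^2 |\Phi_k^\alpha(y;|u|)|^2,
$$
which, combined with the previous display, gives \eqref{7.1}.

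The main obstacle is the rigorous justification of Fubini in the first step: although the sum over $k$ is finite for each $u\neq 0$, the effective cardinality blows up as $|u|\to 0$, so one must be careful with absolute integrability near the origin. This can be handled either by first establishing the kernel identity on the dense class of test functions where the cutoff permits a clean argument, then extending by the $L^2$-estimate obtained from the Plancherel computation, or by a preliminary truncation of the spectral variable $u$ away from zero followed by a passage to the limit. The rest of the argument is essentially a bookkeeping exercise combining the two Plancherel/Parseval identities.
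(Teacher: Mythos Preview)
Your proposal is correct and follows essentially the same approach as the paper: unwind the spectral definition of $H(G_\alpha)f$, justify Fubini to extract the kernel, and then apply Plancherel twice (Fourier in $t$, Laguerre--Parseval in $x$) to obtain \eqref{7.1}. The one place where the paper is more concrete is the Fubini step you flag as the main obstacle: rather than an $L^2$/truncation argument, the paper uses the uniform pointwise bound $|\Phi_k^\alpha|\leq C$ together with the lattice-point count $\sharp\{k\in\N^m:\frak{s}(k)\leq\ell\}\leq C\ell^m$ to get an explicit $L^1$ estimate of the integrand near $u=0$.
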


\begin{proof}
    We consider $f(x,t)=h(x)g(t)$, where $h \in C_c^\infty((0,\infty)^m)$ and $g \in C_c^{\infty}(\R^n)$. We can write
    \begin{align*}\label{eqB.6}
        & H(G_\alpha)f(x,t)
            = \mathcal{F}_2^{-1}\Big[ \sum_{k\in\N^m} H\Big(2(2\frak{s}(k)+\frak{s}(\alpha)+m)|u|\Big) \nonumber\\
        & \qquad \qquad\times \int_{(0,\infty)^m} \Phi_k^\alpha(y;|u|)h(y)dy \mathcal{F}_2(g)(u) \Phi_k^\alpha(x;|u|) \Big](t) \nonumber \\
        & \qquad = \frac{1}{(2\pi)^n} \int_{\R^n} e^{iu\cdot t} \int_{(0,\infty)^m}  \sum_{k\in\N^m} H\Big(2(2\frak{s}(k)+\frak{s}(\alpha)+m)|u|\Big) \Phi_k^\alpha(x;|u|) \Phi_k^\alpha(y;|u|) h(y)dy \nonumber \\
        & \qquad \qquad \times \int_{\R^n} e^{-iu\cdot z}g(z)dzdu \\
        & \qquad = \int_{\R^n} \int_{(0,\infty)^m} K_H^\alpha(y,z;x,t)h(y)g(z)dydz, \quad  x \in (0,\infty)^m \text{ and } t \in \R^n.
    \end{align*}
    Indeed, the interchange of the order of integration can be justified as follows.
    Since $H$ has bounded support there exists $b>0$ such that $H(2(2\frak{s}(k)+\frak{s}(\alpha) +m)|u|)=0$, provided that $2(2\frak{s}(k)+\frak{s}(\alpha) +m)|u|>b$, $u\in\R^n$ and $k\in\N^m$.
    Then, $H(2(2\frak{s}(k)+\frak{s}(\alpha) +m)|u|)=0$, when $2(\frak{s}(\alpha) +m)|u|>b$, $u\in\R^n$ and $k\in\N^m$. Hence, we have that
    \begin{align*}
        & \int_{\R^n}  \int_{(0,\infty)^m} \int_{\R^n} |g(z)||h(y)|\sum_{k\in\N^m}|H\Big(2(2\frak{s}(k)+\frak{s}(\alpha)+m\Big)|u|)||\Phi_k^\alpha(x;|u|)||\Phi_k^\alpha(y;|u|)|dzdydu \\
        & \qquad \leq C \int_{B(0,b/(2\frak{s}(\alpha)+2m))} \int_{(0,\infty)^m}\int_{\R^n}|g(z)||h(y)|\\
        & \qquad \qquad \times \sum_{k\in\N^m,\;2\frak{s}(k)+\frak{s}(\alpha) +m<b/(2|u|)}|\Phi_k^\alpha(x;|u|)||\Phi_k^\alpha(y;|u|)|dzdydu
    \end{align*}
    Furthermore, since $|\Phi_k^\alpha(z)|\leq C$, $k\in\N^m$ and $z\in (0,\infty)^m$, for a certain $C>0$ (see \cite[(27)]{No}, we get
    \begin{align*}
        & \int_{B(0,b/(2\frak{s}(\alpha)+2m))} \int_{(0,\infty)^m}\int_{\R^n}|g(z)||h(y)|\sum_{k\in\N^m,\;2\frak{s}(k)+\frak{s}(\alpha) +m<b/(2|u|)}|\Phi_k^\alpha(x;|u|)||\Phi_k^\alpha(y;|u|)|dzdydu \\
        & \qquad \leq C \|g\|_{L^1(\R^n)} \int_{(0,\infty)^m}|h(y)| \int_{B(0,b/(2\frak{s}(\alpha)+2m))} \sum_{k\in\N^m,\;2\frak{s}(k)+\frak{s}(\alpha) +m<b/(2|u|)}|u|^{{m\over 2}}dydu \\
        & \qquad \leq C \|g\|_{L^1(\R^n)} \int_{(0,\infty)^m}|h(y)| \int_{B(0,b/(2\frak{s}(\alpha)+2m))} {{|u|^{{m\over 2}}}\over{|u|^m}}dydu \\
        & \qquad \leq C \|g\|_{L^1(\R^n)} \|h\|_{L^1((0,\infty)^m)} < \infty.
    \end{align*}
    We have made the change of variable $|u|^{1/2}y=Y$ and have taken into account that $\sharp\{k\in\N^m:\;\frak{s}(k)\leq \ell\}\leq C \ell^m$, where $\sharp A$ represents the cardinal of $A$.

    On the other hand, by using Plancherel equality for Fourier transforms and Laguerre expansions we easily obtain \eqref{7.1}.
\end{proof}

For every $R>0$, we define the weight function,
$$w_R((x,t),(y,z))=\min \{R,1/|y|\}|x|, \quad  x,y \in (0,\infty)^m \text{ and } t,z \in \R^n.$$
The following is our crucial weighted Plancherel inequality.

\begin{Lem}\label{Lem p.7}
    Assume that $\gamma \in [0,n/2)$ and $H$ is a compactly supported Borel measurable complex function defined on $\R$.
    Then, for $y \in (0,\infty)^m \text{ and } z \in \R^n$,
    \begin{equation}\label{eqB.9}
        \| |M|^\gamma K_{H}^\alpha (y,z;\cdot,\cdot) \|_{L^2((0,\infty)^m\times \R^n)}^2
            \leq C \int_0^\infty |H(\omega)|^2 {\omega}^{(n+m)/2}\min\{{\omega}^{-\gamma+n/2},|y|^{2\gamma-n}\} {{d{\omega}}\over {\omega}}.
    \end{equation}
    Particulary, when $\supp H \subset [R^2,4R^2]$, for some $R>0$, we have that
    \begin{align}\label{8.1}
        & \sup_{(y,z) \in (0,\infty)^m \times \R^n} |B_\rho((y,z),1/R)|^{1/2} \| w_R((\cdot,\cdot),(y,z))^\gamma K_H ^\alpha(y,z;\cdot,\cdot)\|_{L^2((0,\infty)^m \times \R^n)}
        \leq C \|\delta_{R^2}H\|_{L^2(\R)},
    \end{align}
    being $C>0$ independent on $R$.
\end{Lem}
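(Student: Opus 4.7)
The plan is to reduce the weighted $L^2$ estimate to a spectral sum via Lemma \ref{Lem p.5}, and then carry out a rescaling that brings out the claimed integrand. Applied to $f = K_H^\alpha(y,z;\cdot,\cdot)$ (viewed as a function of $(x,t)$), compactness of $\supp H$ (together with an approximation argument if needed) ensures $f$ lies in the relevant domain, and Lemma \ref{Lem p.5} gives
$$\||M|^\gamma K_H^\alpha(y,z;\cdot,\cdot)\|_{L^2}^2 \leq C\|G_\alpha^{\gamma/2}|\mathcal{D}|^{-\gamma}K_H^\alpha(y,z;\cdot,\cdot)\|_{L^2}^2.$$
Since $|\mathcal{D}|^{-\gamma}$ multiplies the Fourier variable by $|u|^{-\gamma}$ and $G_\alpha^{\gamma/2}$ acts as multiplication by $(\lambda_k|u|)^{\gamma/2}$ on $\Phi_k^\alpha(\cdot;|u|)$, where $\lambda_k = 2(2\frak{s}(k)+\frak{s}(\alpha)+m)$, Plancherel in $t$ combined with orthonormality of $\{\Phi_k^\alpha(\cdot;|u|)\}_{k\in\N^m}$ in $L^2((0,\infty)^m)$ yields
$$\||M|^\gamma K_H^\alpha(y,z;\cdot,\cdot)\|_{L^2}^2 \leq C\int_{\R^n}\sum_{k\in\N^m}\lambda_k^{\gamma}|u|^{-\gamma}|H(\lambda_k|u|)|^2|\Phi_k^\alpha(y;|u|)|^2\,du.$$

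Next, I would pass to polar coordinates in $u$, substitute $\omega = \lambda_k r$ in each summand of the resulting integral, and use the scaling identity $|\Phi_k^\alpha(y;r)|^2=r^{m/2}|\Phi_k^\alpha(\sqrt{r}\,y)|^2$ to rewrite the bound as
$$\||M|^\gamma K_H^\alpha(y,z;\cdot,\cdot)\|_{L^2}^2 \leq C\int_0^\infty|H(\omega)|^2\omega^{(n+m)/2+n/2-\gamma-1}\Sigma(\omega,y)\,d\omega,$$
where $\Sigma(\omega,y)=\sum_{k\in\N^m}\lambda_k^{2\gamma-n-m/2}|\Phi_k^\alpha(\sqrt{\omega/\lambda_k}\,y)|^2$. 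Since $\omega^{n/2-\gamma}(\omega|y|^2)^{\gamma-n/2}=|y|^{2\gamma-n}$, inequality \eqref{eqB.9} reduces to the Laguerre estimate
$$\Sigma(\omega,y) \leq C\min\{1,(\omega|y|^2)^{\gamma-n/2}\}.$$
This is the main technical step. I would prove it by a dyadic decomposition over $\lambda_k\sim 2^j$: a spectral cluster bound $\sum_{\lambda_k\sim \Lambda}|\Phi_k^\alpha(x)|^2\leq C\Lambda^{m/2}$, obtained by tensorizing one-variable Laguerre estimates, bounds each dyadic piece by $2^{j(2\gamma-n)}$; the Plancherel-Rotach tail estimate for $\varphi_{k_j}^{\alpha_j}$ beyond the turning point $|x|\sim\sqrt{\lambda_k}$ makes the contribution of scales $2^j\ll\sqrt{\omega}|y|$ negligible. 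Since $\gamma<n/2$, geometric summation from $2^j\sim\max(1,\sqrt{\omega}|y|)$ yields $\max(1,\sqrt{\omega}|y|)^{2\gamma-n}=\min\{1,(\omega|y|^2)^{\gamma-n/2}\}$, as required.

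Finally, \eqref{8.1} follows from \eqref{eqB.9} under the hypothesis $\supp H\subset[R^2,4R^2]$: substituting, the factor $\omega^{(n+m)/2-1}$ becomes $\asymp R^{n+m-2}$, the minimum becomes $\min\{R,1/|y|\}^{n-2\gamma}$, and $\int |H|^2d\omega = R^2\|\delta_{R^2}H\|_{L^2}^2$. Multiplying by $|B_\rho((y,z),1/R)|\asymp R^{-(n+m)}\max\{1/R,|y|\}^n$ (the Grushin-type ball estimate from \cite[Proposition 3]{MaSi}) and using $\min\{R,1/|y|\}\cdot\max\{1/R,|y|\}=1$ cancels all the geometric factors and produces \eqref{8.1}. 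The principal obstacle is the Laguerre estimate on $\Sigma(\omega,y)$: the naive pointwise bound $|\Phi_k^\alpha|\leq C$ would demand $\gamma<n/2-m/4$ for summability, so the sharper spectral localization of Laguerre functions, combined with their exponential off-diagonal decay, is essential for the full range $\gamma<n/2$.
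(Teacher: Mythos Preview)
Your proposal is correct and follows essentially the same approach as the paper: apply Lemma~\ref{Lem p.5} to pass to the spectral side, rescale to an integral in $\omega$, and reduce to the Laguerre estimate $\Sigma(\omega,y)\le C\min\{1,(\omega|y|^2)^{\gamma-n/2}\}$, which is exactly the paper's inequality \eqref{eqB.8} with $\varepsilon=n-2\gamma$; the paper proves it via the Muckenhoupt--Webb pointwise bounds \eqref{eqB.7}--\eqref{eqB.7.1} and the argument of \cite[Lemma~9]{MaSi}, which is precisely the dyadic/spectral-cluster/tail-decay mechanism you sketch. Your derivation of \eqref{8.1} from \eqref{eqB.9} also matches the paper's, and the paper likewise handles the domain issue for Lemma~\ref{Lem p.5} by an approximation $H_\ell=\chi_{(1/\ell,\ell)}H$ and a limiting argument.
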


\begin{proof}
    We define, for every $\ell \in \N$,
    $$H_{\ell}(\omega)
        = \chi_{(1/{\ell},\ell)}(\omega)H(\omega), \quad  \omega \in \R.$$
    By using monotone convergence theorem it follows that
    $$\| K_H^\alpha(y,z;\cdot ,\cdot) \|_{L^2((0,\infty)^m\times \R^n)}
        = \lim_{\ell \to \infty} \| K_{H_{\ell}}^\alpha(y,z;\cdot ,\cdot) \|_{L^2((0,\infty)^m\times \R^n)}, \quad  y \in (0,\infty)^m \text{ and } z \in \R^n.$$
    Let $\ell \in \N$.
    Note that
    $$\int_{\R^n} \sum_{k\in\N^m} \Big|  H_{\ell}\Big(2(2\frak{s}(k)+\frak{s}(\alpha)+m)|u|\Big) \Phi_k^\alpha(y;|u|) \Big|^2  \frac{\Big(2(2\frak{s}(k)+\frak{s}(\alpha)+m)|u|\Big)^\gamma}{|u|^{2\gamma}} du < \infty$$
    if, and only if,
    $$\int_{\R^n} \sum_{k\in\N^m} \Big|  H_{\ell}\Big(2(2\frak{s}(k)+\frak{s}(\alpha)+m)|u|\Big) \Phi_k^\alpha(y;|u|) \Big|^2  \frac{du}{|u|^{2\gamma}}  < \infty.$$

    Our next objective is to estimate the following function, for each $y \in (0,\infty)^m$
    $$\Lambda_{\ell}(y)
        = \int_{\R^n} \sum_{k\in\N^m} \Big|  H_{\ell}\Big(2(2\frak{s}(k)+\frak{s}(\alpha)+m)|u|\Big) \Phi_k^\alpha(y;|u|) \Big|^2  \frac{\Big(2(2\frak{s}(k)+\frak{s}(\alpha)+m)|u|\Big)^\gamma}{|u|^{2\gamma}} du. $$
    By making straightforward manipulations we get, for each $y \in (0,\infty)^m$
    \begin{align*}
        \Lambda_{\ell}(y)
            & \leq C \int_0^\infty |H_{\ell}(\omega)|^2\sum_{k\in\N^m} \Big| \Phi_k^\alpha\Big(\frac{\sqrt{\omega}y}{\sqrt{2(2\frak{s}(k)+\frak{s}(\alpha)+m)}}\Big) \Big|^2  \frac{{\omega}^{n+m/2-1-\gamma}}{(2\frak{s}(k)+\frak{s}(\alpha)+m)^{n+m/2-2\gamma}} d{\omega}.
    \end{align*}
    According to {\cite[p. 1124]{MW}}, there exist $C$, $\eta$, $\lambda$ and $\xi \in (0,\infty)$
    such that
    $$|\varphi_k^\alpha(x)|
        \leq C \mathcal{M}_k^\alpha(x), \quad  x \in (0,\infty), \;\alpha>-1/2,  \text{ and } k \in \N,$$
    where
    $$\mathcal{M}_k^\alpha(x)
        = x^{\alpha+1/2} \Big( \frac{1}{\nu_k} + x^2 \Big)^{-1/4-\alpha/2} (\nu_k^{1/3} + |x^2-\nu_k|)^{-1/4} \Psi_k^\alpha(x), \quad  x \in (0,\infty),$$
    and
    $$\Psi_k^\alpha(x)
        = \left\{ \begin{array}{ll}
                    1, & 0 \leq x^2 \leq \nu_k, \\
                    \exp\Big( -\eta |\nu_k-x^2|^{3/2}/\nu_k^{1/2}\Big), & \nu_k \leq x^2 \leq (1+\lambda)\nu_k, \\
                    e^{-\xi x^2}, & (1+\lambda)\nu_k \leq x^2,
                \end{array}\right.$$
    being $\nu_k = 4k +2 \alpha +2$, $k \in \N$.

    Then, we deduce that
    \begin{equation}\label{eqB.7}
        |\varphi_k^\alpha(x)|
        \leq C \left\{ \begin{array}{ll}
                        (\nu_k^{1/3} + |x^2-\nu_k|)^{-1/4}, & x \in (0,\infty), \\
                        e^{-\xi x^2}, & x^2 \geq (1+\lambda)\nu_k.
                \end{array}\right.
    \end{equation}

    Furthermore, as it was done in \cite[Lemma 8]{MaSi}, one can get for all $k\in\N^m$ and $\alpha\in (-1/2,\infty)^m$
     \begin{equation}\label{eqB.7.1}
        |\Phi_k^\alpha(x)|
        \leq C \left\{ \begin{array}{ll}
                        \nu_k^{m/2-1}, & x \in (0,\infty)^m, \\
                        e^{-\xi |x|^2}, & |x|^2 \geq (1+\lambda)\nu_k.
                \end{array}\right.
    \end{equation}
    where, in this case, $\nu_k=2(2\frak{s}(k)+\frak{s}(\alpha)+m)$.
    By proceeding as in the proof of \cite[Lemma 9]{MaSi}, \eqref{eqB.7} and  \eqref{eqB.7.1} allows us to obtain that, for every
    $\varepsilon>0$,
    \begin{equation}\label{eqB.8}
        \sup_{x \in (0,\infty)^m} \sum_{k\in\N^m}\frac{\max\{1,|x|\}^\varepsilon}{(2\frak{s}(k)+\frak{s}(\alpha)+m)^{\varepsilon +m/2}}
                \Big| \varphi_k^\alpha \Big( \frac{x}{\sqrt{2(2\frak{s}(k)+\frak{s}(\alpha)+m)}} \Big) \Big|^2 < \infty.
    \end{equation}
    From \eqref{eqB.8} we deduce that
    $$\Lambda_{\ell}(y)
        \leq C \int_0^\infty |H_{\ell}(\omega)|^2 {\omega}^{(n+m)/2}\min\{{\omega}^{-\gamma+n/2},|y|^{2\gamma-n}\} {{d\omega}\over{\omega}}, \quad  y \in (0,\infty)^m,$$
    provided that $\gamma \in [0,n/2)$.

    By Lemma~\ref{Lem p.5}, we get, for every $ y \in (0,\infty)^m \text{ and } z \in \R^n$,
    \begin{align*}
        \| |M|^\gamma K_{H_{\ell}}^\alpha (y,z;\cdot,\cdot) \|_{L^2((0,\infty)\times \R)}
            \leq C \int_0^\infty |H_{\ell}(\omega)|^2 {\omega}^{(n+m)/2}\min\{{\omega}^{-\gamma+n/2},|y|^{2\gamma-n}\} {{d\omega}\over{\omega}}.
    \end{align*}
    where $C$ does not depend on $\ell$. By taking limits as $\ell \to \infty$ we obtain \eqref{eqB.9}.

    Suppose now that $\supp H \subset [R^2,4R^2]$, where $R>0$. It is clear that
    \begin{align*}
        & \int_0^\infty |H(\omega)|^2 {\omega}^{(n+m)/2}\min\{{\omega}^{-\gamma+n/2},|y|^{2\gamma-n}\} {{d\omega}\over{\omega}} \\
         & \qquad \qquad  = \int_1^4 |H(R^2v)|^2 R^{n+m}v^{(n+m)/2-1}\min\{R^{-2\gamma+n}v^{-\gamma+n/2},|y|^{2\gamma-n}\} dv \\
        & \qquad \qquad \leq C R^{n+m}  \min\{R^{n-2\gamma},|y|^{2\gamma-n}\}  \int_1^4 |H(R^2v)|^2v^{(n+m)/2-1} dv, \quad  y \in (0,\infty)^m.
    \end{align*}
    Since $|B_\rho((x,t),R)| \sim R^{m+n} \max\{|x|,R\}^n$, $x\in (0,\infty)^m$, $R>0$ and $t \in \R^n$ (\cite[Proposition 3, (9)]{MaSi}), we deduce \eqref{8.1}.
\end{proof}

The proof of Theorem~\ref{Th1.1} can be finished now by proceeding as in \cite[Section 4]{MaSi} (see also \cite[Section 4]{CS}).

\section{Proof of Theorem \ref{Th1.2}} \label{sec:3}

\subsection{Some definitions and estimates}

Let $\beta\geq 1/2$. The Laguerre operator $-L_{\beta}$ generates the semigroup of contractions $\{W_t^{\beta}\}_{t>0}$ in $L^2(0,\infty)$, where, for every $t>0$,
$$W_t^{\beta}(g)
    =\sum_{k=0}^{\infty}{e^{-2t(2k+\beta +1)}c_k^\beta(g)\varphi_k^{\beta}}, \quad g\in L^2(0,\infty),$$
where, for every $k\in\N$,
$$c_k^\beta(g)
    =\int_0^{\infty}{\varphi_k^{\beta}(y)g(y)dy}.$$
According to the Mehler's formula \cite[(1.1.47)]{Th}, for every $t>0$, we can write
\begin{equation}\label{eq2.1}
    W_t^{\beta}(g)(x)=\int_0^{\infty}{ W_t^{\beta}(x,y)g(y)dy},\;\;\;x\in(0,\infty),
\end{equation}
for every $g\in L^2(0,\infty)$, where, for every $t,x,y\in (0,\infty)$
$$W_t^{\beta}(x,y)
    =\left({{2e^{-2t}}\over{1-e^{-4t}}}\right)^{1/2}\left({{2xye^{-2t}}\over{1-e^{-4t}}}\right)^{1/2}
        I_{\beta}\left({{2xye^{-2t}}\over{1-e^{-4t}}}\right)\exp\left(-\frac{1}{2}(x^2+y^2){{1+e^{-4t}}\over{1-e^{-4t}}}\right).$$
To simplify notation, it is convenient to consider the functions
    \begin{equation}\label{fafb}
        \fa = \frac{2e^{-2t}}{1-e^{-4t}} \quad  \text{ and } \quad
        \fb = \frac{1}{2}\frac{1+e^{-4t}}{1-e^{-4t}}, \quad  t>0.
    \end{equation}
    Observe that
    \begin{equation}\label{ab}
        \fa t \leq 1/2, \qquad \fb t \geq 1/4 \quad \text{and} \quad  |1-2 \fb| \leq \fa, \qquad t \in (0,\infty),
    \end{equation}
    and these bounds will be used repeatedly.

Here $I_{\beta}$ denotes the modified Bessel function of the first
kind and order $\beta$. By defining $W_t^{\beta}$, $t>0$, in
$L^p(0,\infty)$, $1\leq p\leq\infty$, by the integral in
\eqref{eq2.1}, $\{W_t^{\beta}\}_{t>0}$ is a semigroup  of
contractions in $L^p(0,\infty)$, $1\leq p\leq\infty$ (see
\cite[Theorem 4.1]{NoSt3}).

In the sequel we will use the following properties of the Bessel function $I_{\nu}$, $\nu>-1/2$.
By \cite[(5.11.8) and (5.16.4)]{Leb}, for every $n\in\N $,
\begin{equation}\label{eq2.8}
    \sqrt{2\pi z}I_{\nu}(z)e^{-z}
        =\sum_{r=0}^n (-1)^r\frac{[\nu ,r]}{(2z)^{r}}+\mathcal{O} \Big(\frac{1}{z^{n+1}}\Big),\;\;z\in (0,\infty),
\end{equation}
where $[\nu ,0]=1$ and
$$[\nu ,r]
    ={{(4{\nu}^2-1)(4{\nu}^2-3^2) \cdots (4{\nu}^2-(2r-1)^2)}\over{2^{2r}\Gamma(r+1)}}, \quad r=1,2, ...$$
Also, it is clear that
        \begin{equation}\label{eq2.7.1}
    I_{\nu}(z)
        \sim {z^{\nu}\over{2^{\nu}\Gamma(\nu +1)}},\;\;\mbox{as}\; z\rightarrow 0^+,
\end{equation}
Moreover, according to \cite[(5.7.9)]{Leb} we have that,
\begin{equation}\label{eq2.7}
    {d\over{dz}}(z^{-\nu}I_{\nu}(z))
        =z^{-\nu}I_{\nu +1}(z), \;\;\;z\in(0,\infty),
\end{equation}
and
\begin{equation}\label{22b}
    I_{\nu+1}(z)
        = I_{\nu-1}(z)-\frac{2\nu}{z} I_\nu(z), \;\;\;z\in(0,\infty).
\end{equation}

Let now $\alpha=(\alpha_1,...,\alpha_m)\in[1/2,\infty)^m$. The Laguerre operator $-\frak{L}_{\alpha}$ generates the semigroup of contractions  $\{\mathcal{W}_t^{\alpha}\}_{t>0}$ in $L^2((0,\infty)^m)$, where, for every $t>0$ and $g\in L^2((0,\infty)^m)$,
$$\mathcal{W}_t^{\alpha}(g)(x)=\int_{(0,\infty)^m}{\mathcal{W}_t^{\alpha}(x,y)g(y)dy},\;\;\;x\in (0,\infty)^m,$$
being
$$\mathcal{W}_t^{\alpha}(x,y)=\prod_{j=1}^m{W_t^{\alpha_j}(x_j,y_j)},\;\;t>0,\;\;(x_1,...,x_m)\in (0,\infty)^m\;\mbox{and}\;y=(y_1,...,y_m)\in (0,\infty)^m.$$

The family $\{\mathcal{W}_t^{\alpha}\}_{t>0}$ is a semigroup of contractions in $L^p((0,\infty)^m)$, $1\leq p\leq\infty$.

Assume that $a>0$. Straightforward manipulations allow us to show that the operator $-\frak{L}_{\alpha}(a)$ generates
on $L^p(0,\infty)$, $1\leq p<\infty$, the semigroup of operators $\{\mathcal{W}_{t,a}^{\alpha}\}_{t>0}$, where, for every $t>0$,
$$\mathcal{W}_{t,a}^{\alpha}(g)(x)
    =\int_{(0,\infty)^m}{ \mathcal{W}_t^{\alpha}(x,y;a)g(y)dy}, \quad g\in L^p((0,\infty)^m),\;1\leq p<\infty,$$
being
$$\mathcal{W}_t^{\alpha}(x,y;a)
    =a^{m/2}\mathcal{W}_{ta}^{\alpha}(\sqrt{a}x,\sqrt{a}y), \quad x,y\in (0,\infty)^m\;\mbox{and}\;t>0.$$
We can write, for every $g\in L^p((0,\infty)^m),\;1< p<\infty$,
$${\frak{L}_{\alpha}(a)}^{-{1/2}}g(x)
    ={1\over{\sqrt{\pi}}}\int_0^{\infty}{\mathcal{W}_{t,a}^{\alpha}(g)(x){{dt}\over{\sqrt{t}}}}, \quad x\in (0,\infty)^m.$$
By using the arguments given in \cite{BFRS}, \cite{FGS} and \cite{MS} we can see that, for every  $j=1,...,m$, and $g\in L^2((0,\infty)^m)$,

\begin{equation}\label{eq2.2}
    R_{\alpha, j}(a)(g)(x)
        =\lim_{\varepsilon\rightarrow 0^+}\int_{|x-y|>\varepsilon}{ R_{\alpha, j}(x,y;a)g(y)dy}, \quad \mbox{a.e.}\;x\in (0,\infty)^m,
\end{equation}
where
$$R_{\alpha, j}(x,y;a)
    ={1\over{\sqrt{\pi}}}\int_0^{\infty}{A_{\alpha_j}(a) \mathcal{W}_t^{\alpha}(x,y;a){{dt}\over{\sqrt{t}}}}, \quad x,y\in (0,\infty)^m,\;x\neq y,$$
    and
\begin{equation*}
    \widetilde {R}_{\alpha, j}(a)(g)(x)
        =\lim_{\varepsilon\rightarrow 0^+}\int_{|x-y|>\varepsilon}{\widetilde {R}_{\alpha, j}(x,y;a)g(y)dy}, \quad \mbox{a.e.}\;x\in (0,\infty)^m,
\end{equation*}
where
$$\widetilde {R}_{\alpha, j}(x,y;a)
    ={1\over{\sqrt{\pi}}}\int_0^{\infty}{A_{\alpha_j}^*(a) \mathcal{W}_t^{\alpha}(x,y;a){{dt}\over{\sqrt{t}}}}, \quad x,y\in (0,\infty)^m,\;x\neq y.$$

Moreover, the operators $R_{\alpha, j}(a)$ and $\widetilde {R}_{\alpha, j}(a)$, $j=1,...,m$, can be extended from $L^2((0,\infty)^m)\cap L^p((0,\infty)^m)$
to $L^p((0,\infty)^m)$ as a bounded operator from $L^p((0,\infty)^m)$ into itself, for every $1<p<\infty$.

We need to recall some definitions related to the Hermite operator which we denote by $\mathcal{H}$.

The operator
$$\mathcal{H}=-\Delta + |x|^2,\;\;\;\;\mbox{on}\;\R^m$$
can be written as follows
$$\mathcal{H}=-{1\over 2}\sum_{j=1}^m{(A_jA_j^*+A_j^*A_j)},$$
where
\begin{equation}\label{13.1}
    A_j=\partial_{x_j}+x_j
    \quad \text{and} \quad
    A_j^*=-\partial_{x_j}+x_j, \quad j=1,...,m.
\end{equation}
Note that $A_j^*$ is the "formal" adjoint of $A_j$ in $L^2(\R^n)$, $j=1,...,m$.
For every $k\in\N$, the $k$-th Hermite function $h_k$ is defined by
$$h_k(x)
    =(\sqrt{\pi}2^kk!)^{-{1/2}}e^{-{{x^2}/{2}}}H_k(x), \quad x\in\R,$$
 where $H_k$ represents the $k$-th Hermite polynomial \cite[(4.9.1) and (4.9.2)]{Leb}. We have that
$$Hh_k
    =(2k+1)h_k, \quad k\in\N.$$
where $\displaystyle H
    =-{{d^2}\over{dx^2}}+x^2, \quad x\in\R$. The system $\{h_k\}_{k\in\N}$ is an orthonormal basis in $L^2(\R)$.
The operator $-H$ generates the semigroup of contractions $\{W_t\}_{t>0}$ in $L^2(\R)$, being
for every $t>0$,
$$W_t(g)
    =\sum_{k=0}^{\infty}{e^{-t(2k+1)}b_k(g)h_k}, \quad g\in L^2(\R),$$
and
$$b_k(g)
    = \int_\R h_k(y)g(y)dy, \quad k \in \N.$$
By using Mehler's formula for Hermite functions \cite[(1.1.36)]{Th}  we obtain
\begin{equation}\label{eq2.5}
    W_t(g)(x)
        =\int_\R {W_t(x,y)g(y)dy}, \quad g\in L^2(\R)\;\mbox{and}\;t>0,
\end{equation}
where, for $x,y\in\R\;\mbox{and}\;t\in (0,\infty)$,
\begin{equation}\label{Hermitekernel}
    W_t(x,y)
        ={1\over{\sqrt{\pi}}}\left({{e^{-2t}}\over{1-e^{-4t}}}\right)^{1/2}\exp\left(-\frac{1}{4}\left[(x-y)^2{{1+e^{-2t}}\over{1-e^{-2t}}}+(x+y)^2{{1-e^{-2t}}\over{1+e^{-2t}}}\right]\right).
\end{equation}
Moreover, if $W_t$, $t>0$, is defined by \eqref{eq2.5}, $\{W_t\}_{t>0}$ is a
semigroup of contractions in $L^p(\R)$, $1\leq p<\infty$.

The Hermite operator $-\mathcal{H}$ generates, for every $1\leq p<\infty$, the semigroup of contractions  $\{\mathcal{W}_t\}_{t>0}$ in $L^p(\R^m)$, where, for every $f\in L^p(\R^m)$,
$$\mathcal{W}_t(f)(x)=\int_{\R^m}{\mathcal{W}_t(x,y)f(y)dy}, \;\;\;x\in\R^m,$$
being
$$\mathcal{W}_t(x,y)=\prod_{j=1}^m{W_t(x_j,y_j)},\;\;\;x,\;y\in\R^m\;\mbox{and}\;t>0.$$

In order to study Riesz transforms associated with Grushin operator Jotsaroop, Sanjay and Thangavelu \cite{JST}
considered the scaled Hermite operator $\mathcal{H}(a)$ defined by
$$\mathcal{H}(a)
    =-\Delta +a^2|x|^2, \quad \mbox{on}\;\R^m,$$
for every $a\in\R$. The operator $\mathcal{H}(a)$ can be written as
$$\mathcal{H}(a)
    = \frac{1}{2}\sum_{j=1}^m[A_j(a)A_j^*(a)+A_j^*(a)A_j(a)],$$
where $\displaystyle A_j(a)={d\over{dx_j}}+|a|x_j$ and $\displaystyle A_j^*(a)=-{d\over{dx_j}}+|a|x_j$, $j=1,...,m$.
Riesz transforms for the operator $\mathcal{H}(a)$ were formally defined by
$$R_j(a)
    =A_j(a){\mathcal{H}(a)}^{-{1/2}}\;\;\;\mbox{and}\;\;\;\widetilde{R}_j(a)=A_j^*(a){\mathcal{H}(a)}^{-{1/2}}, \quad  a \in \R \setminus \{0\},\;\mbox{and}\;j=1,...,m.$$
Here, we only consider the Riesz transform $R_1(a)$.  By taking in mind
\cite[(3.1)]{ST1} the operator $R_1(a)$ is defined in $L^2(\R^m)$ as follows
$$R_1(a)(g)
    =  \sum_{k\in\N^m} \sqrt{\frac{2k_1}{2\frak{s}(k)+1}} b_k(a)(g) \frak{h}_{k-1}(\cdot;a), \quad  g \in L^2(\R^m),$$
where, for every $k \in\N^m$,
$$b_k(a)(g)
    = \int_{\R^m} \frak{h}_k(y;a)g(y)dy,$$
and $\frak{h}_k(x;a)=|a|^{m/4} \frak{h}_k(\sqrt{|a|}x)$, $x \in \R^m$, being $\frak{h}_k(x)\displaystyle=\prod_{j=1}^m{h_{k_j}(x_j)}$, $x \in\R^m$.

 $R_1(a)$ is a bounded operator in
$L^2(\R^m)$. Moreover, for every $1<p<\infty$, $R_1(a)$ can be extended from $L^2(\R^m)\cap L^p(\R^m)$ to $L^p(\R^m)$
as a bounded operator from $L^p(\R^m)$ into itself, and, for every $g \in L^p(\R^m)$,
$$R_1(a)(g)(x)
    =\lim_{\varepsilon\rightarrow 0^+}\int_{|x-y|>\varepsilon}{ R_1(x,y;a)g(y)dy}, \quad \mbox{a.e.}\;x\in \R^m,$$
where
\begin{align*}
    R_1(x,y;a)
    & ={1\over{\sqrt{\pi}}}\int_0^{\infty}{|a|^{m/2}A_1(a) \mathcal{W}_{t|a|}(\sqrt{|a|}x,\sqrt{|a|}y){{dt}\over{\sqrt{t}}}},
    \quad  x,y\in\R^m, \quad x \neq y.
\end{align*}
Note that $R_1(x,y;a) =|a|^{m/2}R_1(\sqrt{|a|}x,\sqrt{|a|}y;1)$, $x,y\in\R^m$.\\

Next, we collect some estimates concerning Hermite and Laguerre heat kernels.
Let $\beta \geq 1/2$ and $u,v \in (0,\infty)$. It is convenient to write
$W_t$ and $W_t^\beta$ in terms of the functions $\fa$ and $\fb$ defined in \eqref{fafb}, as follows
\begin{equation*}\label{Wt}
    W_t(u,v)
        = \frac{\sqrt{\fa}}{\sqrt{2 \pi}} e^{-\fb (u^2+v^2)}e^{\fa u v},
\end{equation*}
and
\begin{align}\label{Wtb}
    W_t^\beta(u,v)
        & = \sqrt{\fa} \sqrt{\fa u v} I_\beta \Big( \fa u v \Big) e^{-\fb (u^2+v^2)} \nonumber \\
        & = W_t(u,v) \sqrt{2\pi \fa u v} I_\beta \Big( \fa u v \Big) e^{-\fa u v}.
\end{align}
Since the asymptotics for the modified Bessel function $I_\beta$ depend on whether its argument is small or large
(see \eqref{eq2.8} and \eqref{eq2.7.1}) it will be useful to consider the next to sets
$$A_1(u,v)=\{t \in (0,\infty) :  \fa  u v \leq 1\}
\quad \text{and} \quad
B_1(u,v)=\{t \in (0,\infty) :  \fa  u v \geq 1\}.$$

\begin{Lem}\label{Lem W}
    Let $\beta \geq 1/2$, $0<\eps<1$ and $t,u,v \in (0,\infty)$. Then, \\ \quad
    \begin{itemize}
        \item[$(a)$] $\displaystyle W_t^\beta(u,v) \leq C W_t(u,v) \leq C \frac{e^{-c|u-v|^2/t}}{\sqrt{t}}.$ \\ \quad  \\

        \item[$(b)$] $\displaystyle \Big|W_t^{\beta}(u,v)-W_t(u,v)\Big|
                                        \leq C e^{-(1-\eps)t} \left\{ \begin{array}{ll}
                                                                \displaystyle {{e^{-c|u-v|^2/t}}\over{\sqrt{t}}}, & t \in A_1(u,v), \\
                                                                \\
                                                                \displaystyle {{1}\over{\fa  u v}} {{e^{-c|u-v|^2/t}}\over{\sqrt{t}}}, & t \in B_1(u,v).
                                                            \end{array}\right.$ \\ \quad  \\

        \item[$(c)$] $\displaystyle \Big| \partial_{u}^{\ell} W_t(u,v)\Big|
                            \leq C \frac{e^{-c|u-v|^2/t}}{t^{(\ell+1)/2}}, \quad \ell =1,2.$ \\ \quad  \\

        \item[$(d)$] $\displaystyle \Big| u \partial_{u}W_t(u,v)+v \partial_{v}W_t(u,v) \Big|
                            \leq C {{e^{-c|u-v|^2/t}}\over{\sqrt{t}}}.$ \\ \quad  \\

        \item[$(e)$] $\displaystyle \Big| u \partial_{u}W_t^{\beta}(u,v)+v \partial_{v}W_t^{\beta}(u,v) \Big|
                            \leq C \Big(1 + e^{(1+\eps)t} \chi_{\{u/2<v<2u\}}(v) \Big) {{e^{-c|u-v|^2/t}}\over{\sqrt{t}}}.$ \\ \quad  \\

        \item[$(f)$] $\displaystyle \Big|u\partial_{u}[W_t^{\beta}-W_t](u,v)+v\partial_{v}[W_t^{\beta}-W_t](u,v)\Big|
                            \leq C {{e^{-(1-\eps)t}}\over{(\fa u v)^{1/4}}} \frac{e^{-c|u-v|^2/t}}{\sqrt{t}}.$ \\ \quad  \\
    \end{itemize}
\end{Lem}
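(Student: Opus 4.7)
The plan is to derive all six estimates from the factorization \eqref{Wtb}, $W_t^\beta=W_t\,G(\fa uv)$ with $G(z)=\sqrt{2\pi z}\,I_\beta(z)e^{-z}$, combined with the Bessel asymptotics \eqref{eq2.8}, \eqref{eq2.7.1} and the recursion identities \eqref{eq2.7}, \eqref{22b}. Two facts about $G$ will be used repeatedly. First, $G$ and $G_{\beta+1}(z):=\sqrt{2\pi z}\,I_{\beta+1}(z)e^{-z}$ are bounded and continuous on $[0,\infty)$, with $G(z)=1+O(1/z)$ at infinity and $G(z)\sim Cz^{\beta+1/2}$ near $0$. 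Second, substituting $I_\beta'=I_{\beta+1}+\beta z^{-1}I_\beta$ into the direct differentiation of $G$ yields
\[
2zG'(z)=(1+2\beta-2z)G(z)+2z\,G_{\beta+1}(z),
\]
and pushing \eqref{eq2.8} to second order will show that the $O(1)$ contributions at infinity cancel, giving $|2zG'(z)|\le C/(1+z)$ for $z\ge 1$; together with $|2zG'(z)|\le Cz^{\beta+1/2}$ near $0$ this produces the uniform envelope
\[
|G(\fa uv)-1|+|2\fa uv\,G'(\fa uv)|\le C(\fa uv)^{-1/4},
\]
which will be the engine of (f). A second workhorse is the refinement $\sqrt{\fa}\le C_\eps e^{-(1-\eps)t}/\sqrt{t}$ for $t>0$, verified by using $\fa t\le 1/2$ from \eqref{ab} on $(0,1]$ and $\fa\le Ce^{-2t}$ together with $\sqrt{t}e^{-\eps t}$ bounded on $[1,\infty)$.

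With this setup each estimate is routine. For (a) I would rewrite the exponent of $W_t$ as $-\fb(u-v)^2-(2\fb-\fa)uv$, drop the nonnegative second term, and use $\fb t\ge 1/4$ and $\sqrt{\fa}\le C/\sqrt{t}$ from \eqref{ab}; $W_t^\beta\le CW_t$ follows from the boundedness of $G$. For (b) I would write $W_t^\beta-W_t=W_t[G(\fa uv)-1]$ and apply $|G-1|\le C$ on $A_1(u,v)$ together with $|G-1|\le C/(\fa uv)$ on $B_1(u,v)$ (from \eqref{eq2.8}), inserting the refined $\sqrt{\fa}$ bound to produce the factor $e^{-(1-\eps)t}$. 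For (c), differentiating \eqref{Hermitekernel} gives $\partial_uW_t=-\tfrac12 W_t[\alpha_1(u-v)+\alpha_2(u+v)]$ with $\alpha_1=\coth t$, $\alpha_2=\tanh t$; absorbing each polynomial factor into its own Gaussian via $xe^{-x^2/2}\le C$ and using $\sqrt{\fa\alpha_1}+\sqrt{\fa\alpha_2}\le C/t$ handles $\ell=1$, and $\ell=2$ follows by one more differentiation. For (d) the direct computation gives $u\partial_uW_t+v\partial_vW_t=-2W_t[\fb(u-v)^2+(2\fb-\fa)uv]$, after which the inequality $xe^{-x/2}\le C$ absorbs both quadratic factors into the exponent of $W_t$, leaving just $\sqrt{\fa}$ times a Gaussian.

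Parts (e) and (f) both hinge on the scaling identity
\[
(u\partial_u+v\partial_v)W_t^\beta=G(\fa uv)(u\partial_u+v\partial_v)W_t+W_t\cdot 2\fa uv\,G'(\fa uv),
\]
which reduces them to (d) together with the $G$-bounds above. For (e), boundedness of $G$ and $2zG'$ combined with (a) and (d) already yields the sharper bound $Ce^{-c|u-v|^2/t}/\sqrt{t}$, which implies the stated one. For (f) I would subtract the trivial case (formally $G$ replaced by $1$) to get
\[
(u\partial_u+v\partial_v)(W_t^\beta-W_t)=[G(\fa uv)-1](u\partial_u+v\partial_v)W_t+W_t\cdot 2\fa uv\,G'(\fa uv),
\]
and then apply the common envelope $|G-1|+|2\fa uv\,G'|\le C(\fa uv)^{-1/4}$ together with the refined bounds $W_t+|u\partial_uW_t+v\partial_vW_t|\le Ce^{-(1-\eps)t}e^{-c|u-v|^2/t}/\sqrt{t}$, obtained by substituting $\sqrt{\fa}\le C_\eps e^{-(1-\eps)t}/\sqrt{t}$ in place of $\sqrt{\fa}\le C/\sqrt{t}$ inside the proofs of (a) and (d).

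The main obstacle lies in the envelope for $2zG'$: the leading behaviours of $G$ and $G_{\beta+1}$ at infinity only give $|2zG'(z)|=O(1)$, and one must expand both to second order via \eqref{eq2.8} to see that these $O(1)$ pieces cancel exactly, so that $|2zG'(z)|=O(1/z)$. Without this second-order cancellation the decay $(\fa uv)^{-1/4}$ needed in (f) is unavailable.
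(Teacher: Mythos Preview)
Your proposal is correct and follows essentially the same route as the paper: the factorization $W_t^\beta=W_t\cdot G(\fa uv)$, the Bessel asymptotics \eqref{eq2.8}--\eqref{eq2.7.1}, and the second-order cancellation giving $2zG'(z)=O(1/z)$ at infinity are exactly the ingredients the paper uses (compare your decomposition for (f) with the paper's \eqref{eq2.8.1.Z18}--\eqref{eq2.8.1.Z21}). The one notable difference is in (e): the paper computes $u\partial_uW_t^\beta+v\partial_vW_t^\beta$ directly from \eqref{eq2.8.1.Z2} and splits into cases, picking up the extraneous factor $e^{(1+\eps)t}$ in the local region, whereas your application of the scaling identity together with the boundedness of $G$ and $2zG'$ yields the cleaner bound $Ce^{-c|u-v|^2/t}/\sqrt t$ uniformly---a genuine sharpening of the stated estimate that also shortens the argument.
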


\begin{proof}[Proof of  $(a)$]
    The first inequality is a consequence of \eqref{Wtb} together with \eqref{eq2.8} and \eqref{eq2.7.1}. The second one
    follows easily from \eqref{Hermitekernel} and \eqref{ab}.
\end{proof}

\begin{proof}[Proof of  $(b)$]
    It is straightforward from the relation \eqref{Wtb}, Lemma~\ref{Lem W}, $(a)$; and \eqref{eq2.8}.
\end{proof}

\begin{proof}[Proof of  $(c)$]
    It is enough to note that,
    \begin{align*}
        \Big| \partial_{u} W_t(u,v) \Big|
        = & {\sqrt{\fa}\over{2 \sqrt{\pi}}} \exp\left(-{1\over 4}\left[(u-v)^2{{1+e^{-2t}}\over{1-e^{-2t}}}+(u+v)^2{{1-e^{-2t}}\over{1+e^{-2t}}}\right]\right) \\
        & \times  \Big| (u-v){{1+e^{-2t}}\over{1-e^{-2t}}}+(u+v){{1-e^{-2t}}\over{1+e^{-2t}}} \Big| \\
        \leq & C \frac{e^{-c|u-v|^2/t}}{t}.
    \end{align*}
    We can proceed similarly when we take two derivatives.
\end{proof}

\begin{proof}[Proof of  $(d)$]
    Observe that
    \begin{align}\label{28.1}
         \Big|u \partial_{u}W_t(u,v)+v\partial_{v}W_t(u,v)\Big|
       \leq &   C \sqrt{\fa} \exp\left(-{1\over 4}\left[(u-v)^2{{1+e^{-2t}}\over{1-e^{-2t}}}+(u+v)^2{{1-e^{-2t}}\over{1+e^{-2t}}}\right]\right) \nonumber \\
        &  \times  \left((u-v)^2{{1+e^{-2t}}\over{1-e^{-2t}}}+(u+v)^2{{1-e^{-2t}}\over{1+e^{-2t}}}\right) \nonumber \\
      \leq  &   C {{e^{-c|u-v|^2/t}}\over{\sqrt{t}}}.
    \end{align}
\end{proof}

\begin{proof}[Proof of  $(e)$]
From \eqref{eq2.7} we deduce that,
   \begin{align} \label{eq2.8.1.Z2}
        \partial_u W_t^{\beta}(u,v)
            = &  \sqrt{\fa } \partial_u \Big[\Big(\fa uv \Big)^{\beta + 1/2} e^{-\fb (u^2+v^2)}
                    \Big(\fa  uv \Big)^{-\beta} I_{\beta}\Big(\fa uv \Big)\Big] \nonumber \\
            = &  \sqrt{\fa } \Big[(\beta + 1/2) \fa  v \Big( \fa  uv\Big)^{-1/2} I_{\beta}\Big(\fa  uv\Big)
                    - \sqrt{\fa uv} I_{\beta}\Big(\fa uv\Big) 2 u \fb  \nonumber \\
              &  + \fa  v \sqrt{\fa  uv} I_{\beta +1}\Big( \fa  uv \Big)\Big] e^{-\fb  (u^2+v^2)}.
    \end{align}
Hence
   \begin{align*}
        u\partial_{u}W_t^{\beta}( u , v ) + v\partial_{v}W_t^{\beta}( u , v )
       = &\sqrt{\fa }\Big[\sqrt{\fa u v }I_{\beta}\Big( \fa u v \Big)
            \Big((2\beta + 1)-2 \fb( u ^2+ v ^2)\Big) \\
       & +2\Big( \fa u v \Big)^{3/2}I_{\beta +1}\Big( \fa u v \Big)\Big]
            e^{- \fb  ( u ^2+ v ^2)} \\
         = &\sqrt{2\pi} W_t ( u , v )\Big[\sqrt{\fa u v }I_{\beta}\Big( \fa u v \Big)
            \Big((2\beta + 1)-2 \fb ( u ^2+ v ^2)\Big)\\
       &+2\Big( \fa u v \Big)^{3/2}I_{\beta +1}\Big( \fa u v \Big) \Big] e^{-\fa u v }.
   \end{align*}
By \eqref{eq2.8}, \eqref{eq2.7.1} and \eqref{Hermitekernel} we obtain
        \begin{align*} 
             \left| u\partial_{u}W_t^{\beta}( u , v ) + v\partial_{v}W_t^{\beta}( u , v ) \right|
                    \leq & C \Big(1+ \fb (u^2+ v ^2) + \fa u v \Big){{e^{-c| u - v |^2/(1-e^{-2t})}}\over{\sqrt{t}}} \nonumber \\
             \leq & C \frac{e^{-c| u - v |^2/t}}{\sqrt{t}}, \quad v \in (0,u/2] \cup [2u,\infty).
        \end{align*}
    Suppose now that $0< u /2< v <2 u $. Applying \eqref{eq2.7.1} and Lemma~\ref{Lem W}, $(a)$; we get for each $t \in A_1(u,v)$,
    \begin{align*} 
             \left|u\partial_{u}W_t^{\beta}( u , v ) + v\partial_{v}W_t^{\beta}( u , v )\right|
                & \leq C \Big( 1 + \fb u^2\Big) e^{-c \fa u^2} W_t( u , v ) \\
                & \leq C ( 1 + e^{2t})  W_t( u , v )
                \leq C ( 1 + e^{(1+\eps)t}) {{e^{-c| u - v |^2/t}}\over{\sqrt{t}}}.
        \end{align*}
    On the other hand, \eqref{eq2.8} with $n=0$ give us ,for $t \in B_1(u,v)$,
    \begin{align*}
       & \left| -2 \fb  ( u ^2 +  v ^2)\sqrt{\fa u v }I_{\beta}\Big( \fa u v \Big)
            +2\Big( \fa u v \Big)^{3/2}I_{\beta +1}\Big( \fa u v \Big)\right| e^{-\fa u v } \\
       & \qquad \leq C \left(\left| -2 \fb ( u ^2+ v ^2)+ 2 \fa  u  v  \right|+ \frac{2 \fb ( u ^2+ v ^2) + 2 \fa  u  v }{\fa u v }\right) \\
        & \qquad \leq C \left(( u + v )^2{{1-e^{-2t}}\over{1+e^{-2t}}}+( u - v )^2{{1+e^{-2t}}\over{1-e^{-2t}}}+{{( u ^2+ v ^2)e^{2t}+ u  v }\over{ u  v }}\right).
   \end{align*}
 Thus, for every $t \in B_1(u,v)$,
        \begin{align*}
            & \left| u\partial_{u}W_t^{\beta}( u , v ) + v\partial_{v}W_t^{\beta}( u , v ) \right|
                \leq C(1+e^{(1+\eps)t}){{e^{-c| u - v |^2/t}}\over{\sqrt{t}}}.
        \end{align*}
\end{proof}

\begin{proof}[Proof of  $(f)$]
     According to \eqref{eq2.8.1.Z2} we get,
     \begin{align} \label{eq2.8.1.Z18}
        & u\partial_u[W_t^{\beta}-W_t](u,v)+v\partial_v[W_t^{\beta}-W_t](u,v)  \nonumber \\
        & \qquad = u\partial_{u}\left[W_t(u,v)\left(\sqrt{2\pi \fa u v }I_{\beta}\Big(\fa uv\Big) e^{-\fa uv}-1\right)\right] \nonumber\\
        & \qquad \qquad + v\partial_{v}\left[W_t(u,v)\left(\sqrt{2\pi \fa u v }I_{\beta}\Big(\fa uv\Big) e^{-\fa uv}-1\right)\right] \nonumber\\
        & \qquad = \Big[u \partial_uW_t(u,v)+v\partial_vW_t(u,v)\Big]\left(\sqrt{2\pi \fa u v }I_{\beta}\Big(\fa uv\Big) e^{-\fa uv}-1\right) \nonumber \\
        & \qquad \qquad + W_t(u,v)(u\partial_{u}+v\partial_{v})\left(\sqrt{2\pi \fa u v }I_{\beta}\Big(\fa uv\Big)e^{-\fa uv}\right) \nonumber \\
        & \qquad = {{\sqrt{\fa}}\over 2 \sqrt{\pi}}  \left[(u-v)^2{{1+e^{-2t}}\over{1-e^{-2t}}}+(u+v)^2{{1-e^{-2t}}\over{1+e^{-2t}}}\right] \nonumber \\
        & \qquad \qquad \times \exp\left(-{1\over 4}\left[(u-v)^2{{1+e^{-2t}}\over{1-e^{-2t}}}+(u+v)^2{{1-e^{-2t}}\over{1+e^{-2t}}}\right]\right)
                \left(\sqrt{2\pi \fa a u v }I_{\beta}\Big(\fa a u v \Big)e^{-\fa a u v }-1\right) \nonumber \\
        & \qquad \qquad +  W_t(u,v)(u\partial_{u}+v\partial_{v})\left(\sqrt{2\pi \fa u v }I_{\beta}\Big(\fa uv\Big)e^{-\fa uv}\right).
     \end{align}
     By \eqref{eq2.7} we have that
      \begin{align}\label{eq2.8.1.Z19}
         \frac{d}{dz} \Big[ \sqrt{2\pi z} I_\beta(z)e^{-z}\Big]
            &= \sqrt{2\pi}  \frac{d}{dz} \Big[ z^{\beta+1/2} z^{-\beta} I_\beta(z)e^{-z}\Big] \nonumber \\
        & = \frac{\beta+1/2}{z} \sqrt{2 \pi z} I_\beta(z)e^{-z} + \sqrt{2 \pi z} I_{\beta+1}(z)e^{-z} - \sqrt{2 \pi z} I_\beta(z)e^{-z}, \quad  z \in (0,\infty).
    \end{align}
     Then, by using \eqref{eq2.7.1} we obtain,
    \begin{equation}\label{eq2.8.1.Z20}
        \left| \frac{d}{dz} \Big( \sqrt{2\pi z} I_\beta(z)e^{-z}\Big) \right|
            \leq C z^{\beta-1/2}, \quad  z \in (0,1).
    \end{equation}
      From \eqref{eq2.8} and \eqref{eq2.8.1.Z19} we deduce that, for every $n=2,3, \dots,$
    \begin{align*}
        & \frac{d}{dz} \Big( \sqrt{2 \pi z} I_\beta(z) e^{-z}\Big)
            = \sqrt{2 \pi} \Big( \frac{\beta+1/2}{z} z^{1/2} I_\beta(z) e^{-z} + z^{1/2} I_{\beta+1}(z) e^{-z} - z^{1/2} I_\beta(z) e^{-z}\Big) \\
        & \qquad = \frac{\beta+1/2}{z} \Big( \sum_{r=0}^n (-1)^r\frac{ [\beta,r] }{(2z)^{r}} + \mathcal{O}\Big(\frac{1}{z^{n+1}}\Big)\Big)
                    + \sum_{r=0}^n (-1)^r \frac{[\beta +1 ,r]}{ (2z)^{r}} + \mathcal{O}\Big(\frac{1}{z^{n+1}}\Big) \\
        & \qquad \qquad - \sum_{r=0}^n (-1)^r \frac{[\beta ,r]}{ (2z)^{r}} + \mathcal{O}\Big(\frac{1}{z^{n+1}}\Big)
    \end{align*}
    \begin{align*}
        & \qquad = \sum_{r=1}^n (-1)^r \frac{-(2\beta+1)[\beta,r-1] + [\beta+1,r] - [\beta,r]}{ (2z)^{r}} + \mathcal{O}\Big(\frac{1}{z^{n+1}}\Big), \quad  z \in (0,\infty).
    \end{align*}
    Since,
    $$-(2\beta+1)[\beta,r-1] + [\beta+1,r] - [\beta,r]
        = 2(r-1)[\beta,r-1], \quad  r=1,2,3, \dots, $$
    we get
    \begin{equation*}
        \frac{d}{dz} \Big( \sqrt{2 \pi z} I_\beta(z) e^{-z}\Big)
            = \sum_{r=2}^n (-1)^r\frac{ 2(r-1)[\beta,r-1]}{(2z)^{r}} + \mathcal{O}\Big(\frac{1}{z^{n+1}}\Big), \quad  z \in (0,\infty),
    \end{equation*}
    for every $n=2,3, \dots$. Then,
    \begin{equation}\label{eq2.8.1.Z21}
        \frac{d}{dz} \Big( \sqrt{2 \pi z} I_\beta(z) e^{-z}\Big)
            = \mathcal{O}\Big(\frac{1}{z^{2}}\Big), \quad  z \in (0,\infty).
    \end{equation}
    According to \eqref{eq2.8} and \eqref{eq2.7.1}  it follows that
    \begin{align*}
        &\Big| \sqrt{\fa}\Big[( u - v )^2{{1+e^{-2t}}\over{1-e^{-2t}}}+( u + v )^2{{1-e^{-2t}}\over{1+e^{-2t}}}\Big]
        \exp\Big(-{1\over 4}\Big[( u - v )^2{{1+e^{-2t}}\over{1-e^{-2t}}}+( u + v )^2{{1-e^{-2t}}\over{1+e^{-2t}}}\Big]\Big) \nonumber \\
        & \qquad  \qquad \times \Big(\sqrt{2\pi \fa u v }I_{\beta}\Big(\fa u v \Big)e^{-\fa u v }-1\Big)\Big| \nonumber \\
        & \qquad \leq C {{e^{-(1-\eps)t}}\over{(\fa u v )^{1/4}}}  \frac{e^{-c| u - v |^2/t}}{\sqrt{t}}, \quad  u , v ,t\in(0,\infty).
     \end{align*}
     Moreover, \eqref{eq2.8.1.Z20} and \eqref{eq2.8.1.Z21} imply that
     \begin{align*}
        & \Big|W_t( u , v )( u \partial_{u}+ v \partial_{v})\Big(\sqrt{2\pi \fa u v }I_{\beta}(\fa u v )e^{-\fa u v }\Big)\Big|
        \leq C {{e^{-(1-\eps)t}}\over{(\fa u v )^{1/4}}} \frac{e^{-c|u - v|^2/t}}{\sqrt{t}}.
    \end{align*}
\end{proof}

Recall that, for every $t,u,v \in(0,\infty)$,
$$G_t^{\beta}(u,v)=A_{\beta}(1)W_t^{\beta}(u,v)
\quad \text{and} \quad
G_t(u,v)=A(1)W_t(u,v),$$
being $A_\beta(1)$ and $A(1)$ the usual derivatives in the Laguerre and Hermite setting, respectively (see \eqref{3.1} and \eqref{13.1}).

\begin{Lem}\label{Lem G}
    Let $\beta \geq 1/2$, $0<\eps<1$ and $t,u,v \in (0,\infty)$. Then, \\ \quad
    \begin{itemize}
        \item[$(a)$] $\displaystyle \Big| G_t(u,v) \Big|
                            \leq C e^{-(3-\eps)t} \frac{e^{-c|u-v|^2/t}}{t}.$ \\ \quad  \\

        \item[$(b)$] $\displaystyle \Big| G_t^{\beta}(u,v)-G_t(u,v) \Big|
                                        \leq C  \left\{ \begin{array}{ll}
                                                                \displaystyle \frac{e^{-(3/2-\eps)t}}{\sqrt{u}} {{e^{-c|u-v|^2/t}}\over{t^{3/4}}}, & u/2 < v < 2u  , \\
                                                                \\
                                                                \displaystyle e^{-(3-\eps)t} \max\{u,v\} {{e^{-c|u-v|^2/t}}\over{t^{3/2}}}, & v \in (0,u/2] \cup [2u,\infty).
                                                            \end{array}\right.$ \\ \quad  \\

        \item[$(c)$] $\displaystyle \Big|u\partial_{u}G_t(u,v) + v\partial_{v}G_t(u,v)\Big|
                            \leq C \frac{e^{-c|u-v|^2/t}}{t}.$ \\ \quad  \\

        \item[$(d)$] $\displaystyle \Big|\partial_{u}[G_t^{\beta}-G_t](u,v)\Big|
                            \leq C \frac{e^{-c(u^2+v^2)/t}}{t^{3/2}}, \quad t \in A_1(u,v).$ \\ \quad  \\

        \item[$(e)$] $\displaystyle \Big|u \partial_{u}[G_t^{\beta}-G_t](u,v) + v\partial_{v}[G_t^{\beta}-G_t](u,v) \Big|
                    \leq C \Big( \frac{u^{1/4}}{v^{3/4}} + \frac{v^{1/4}}{u^{3/4}} \Big) \frac{e^{-c|u-v|^2/t}}{t^{3/4}}, \quad t \in B_1(u,v).$ \\ \quad  \\

        \item[$(f)$] $\displaystyle \Big|u \partial_{u}[G_t^{\beta}-G_t](u,v) + v\partial_{v}[G_t^{\beta}-G_t](u,v) \Big|
        \leq C \frac{e^{-c\max\{u,v\}^2/t}}{t}, \quad v \in (0,u/2] \cup [2u,\infty).$ \\ \quad  \\

        \item[$(g)$] $\displaystyle \Big| u\partial_{u}^2[G_t^{\beta}-G_t](u,v) \Big|
                                        \leq C  \left\{ \begin{array}{ll}
                                                                \displaystyle \frac{e^{-c(u^2+v^2)/t}}{t^{3/2}}, & t \in A_1(u,v)  , \\
                                                                \\
                                                                \displaystyle \frac{e^{-c|u-v|^2/t}}{t^{3/2}}, & t \in B_1(u,v).
                                                            \end{array}\right.$ \\ \quad  \\
    \end{itemize}
\end{Lem}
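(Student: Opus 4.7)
The plan is to carry out direct computations for $G_t$ and then to compare $G_t^{\beta}$ with $G_t$ via the product representation $W_t^{\beta}=F(\fa uv)\,W_t$ from \eqref{Wtb}, where $F(z)=\sqrt{2\pi z}\,I_\beta(z)e^{-z}$, using the asymptotics \eqref{eq2.8}, \eqref{eq2.7.1} of $I_\beta$ and the estimates in Lemma~\ref{Lem W}. For $(a)$, applying $A(1)=\partial_u+u$ to \eqref{Hermitekernel} and simplifying yields the closed form
$$G_t(u,v)=\fa\,(v-ue^{-2t})\,W_t(u,v).$$
Splitting $v-ue^{-2t}=(v-u)+u(1-e^{-2t})$, for $t\leq 1$ I would use $\fa\leq C/t$, the Gaussian-polynomial absorption $|v-u|\,e^{-c|v-u|^2/t}\leq C\sqrt{t}\,e^{-c'|v-u|^2/t}$, and the spatial factor of $W_t$ of the form $\exp\bigl(-(u+v)^2(1-e^{-2t})/(4(1+e^{-2t}))\bigr)$ to absorb the $u$ in the second summand, obtaining the $e^{-c|u-v|^2/t}/t$ bound. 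For $t\geq 1$ one has $\fa=O(e^{-2t})$ and $W_t=O(e^{-t}e^{-(u^2+v^2)/2})$, so the product is $O(e^{-3t}e^{-c(u^2+v^2)})$, dominated by the right-hand side of $(a)$ since $te^{-\eps t}$ is bounded and $e^{-c(u^2+v^2)}\leq e^{-c|u-v|^2/(2t)}$ in this range. Step $(c)$ follows from the same closed formula after applying $u\partial_u+v\partial_v$: the factor $(u\partial_u+v\partial_v)(v-ue^{-2t})$ is controlled by $u+v$ and absorbed into the spatial Gaussian, while the contribution from $W_t$ is handled by Lemma~\ref{Lem W}~$(d)$.

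For parts $(b)$, $(d)$ and $(f)$, the key is the splitting
$$G_t^{\beta}-G_t=(\partial_u+u)(W_t^{\beta}-W_t)-\frac{\beta+1/2}{u}\,W_t^{\beta}.$$
In the diagonal region $u/2<v<2u$, the argument $\fa uv$ is large precisely on $B_1(u,v)$, and the asymptotic $F(z)-1=O(z^{-1})$ from \eqref{eq2.8} produces a gain of order $(\fa uv)^{-1/4}$ after combining with the derivative estimates of Lemma~\ref{Lem W}, exactly in the spirit of part $(f)$ of that lemma. In the off-diagonal region $v\in(0,u/2]\cup[2u,\infty)$, where $|u-v|\sim\max\{u,v\}$, the Gaussian $e^{-c|u-v|^2/t}$ absorbs both the singular factor $1/u$ in the second summand and the polynomial weight $\max\{u,v\}$ of the statement, via $re^{-cr^2}\leq Ce^{-c'r^2}$. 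Part $(d)$ is the same analysis restricted to $t\in A_1(u,v)$: there \eqref{eq2.7.1} yields $|F(z)-1|\leq C z^{\beta+1/2}$, so the factor $(\fa uv)^{\beta+1/2}$ pairs with the spatial Gaussian of $W_t$ to produce the stronger Gaussian $e^{-c(u^2+v^2)/t}$.

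For $(e)$ and $(g)$, I would expand $\partial_u(W_t^{\beta}-W_t)$ using \eqref{eq2.8.1.Z2}, apply the Leibniz rule to $F(\fa uv)W_t$, and split into two pieces: $(u\partial_u+v\partial_v)W_t\cdot(F(\fa uv)-1)$ and $W_t\cdot(u\partial_u+v\partial_v)F(\fa uv)$. The first piece is controlled via Lemma~\ref{Lem W}~$(d)$ together with the asymptotics of $F-1$ on $A_1$ and $B_1$; for the second I use $(u\partial_u+v\partial_v)(\fa uv)=2\fa uv$ together with \eqref{eq2.8.1.Z20} on $A_1$ and \eqref{eq2.8.1.Z21}, namely $F'(z)=O(z^{-2})$, on $B_1$. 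For the second-derivative estimate $(g)$ one applies $\partial_u$ once more and repeats the same asymptotic analysis on $A_1$ and $B_1$; the extra factor of $u$ in the statement absorbs the additional $1/u$ produced by Leibniz when two derivatives land on $F(\fa uv)$.

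The main obstacle I anticipate is $(g)$, and to a lesser extent $(e)$: these require sharp asymptotics for $F'$ and $F''$ in both regimes $A_1$ and $B_1$, and $\partial_u^2(W_t^{\beta}-W_t)$ decomposes into several mixed terms whose control demands matching the Gaussian absorption with the $\partial_u^2 W_t$ bound from Lemma~\ref{Lem W}~$(c)$. Moreover, in the transition zone where $\fa uv\sim 1$ one must verify that the two sets of asymptotics match continuously at the boundary of $A_1$ and $B_1$, typically handled by overlapping the cases. A subsidiary difficulty in $(b)$ is the behavior of $W_t^{\beta}/u$ as $u\to 0^+$: here \eqref{eq2.7.1} gives $F(\fa uv)\sim c(\fa uv)^{\beta+1/2}$, so that $W_t^{\beta}/u$ carries the factor $u^{\beta-1/2}$, harmless for $\beta\geq 1/2$ and absorbed into the spatial Gaussian.
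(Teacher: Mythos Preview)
Your treatment of $(a)$ and $(c)$ via the closed form $G_t(u,v)=\fa(v-ue^{-2t})W_t(u,v)$ is exactly what the paper does (the paper writes the equivalent $(\fa v+(1-2\fb)u)W_t$), and your plan for $(e)$ and $(g)$---Leibniz on $F(\fa uv)W_t$ together with the asymptotics \eqref{eq2.8.1.Z20}, \eqref{eq2.8.1.Z21} and a second-derivative analogue---matches the paper's ten-term expansion and the estimate \eqref{eqG30}.

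There is, however, a genuine gap in your approach to $(b)$ (and hence $(d)$, $(f)$). In the splitting
\[
G_t^{\beta}-G_t=(\partial_u+u)(W_t^{\beta}-W_t)-\frac{\beta+1/2}{u}\,W_t^{\beta},
\]
the last term carries no factor of $\fa$: on $B_1(u,v)$ one only has $W_t^{\beta}\sim W_t$, so $\frac{1}{u}W_t^{\beta}\lesssim \frac{1}{u\sqrt t}e^{-c|u-v|^2/t}$, and this does \emph{not} match the targets in $(b)$, which demand an extra factor $e^{-(3-\eps)t}$ (off-diagonal) or $e^{-(3/2-\eps)t}$ (diagonal). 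Concretely, for $v\in(0,u/2]$ and $t$ large one would need $t^{1/2}e^{(3-\eps)t}/u^2\leq C$, which is false in general; the absorption argument you sketch handles spatial singularities but cannot manufacture exponential decay in $t$. The ``subsidiary difficulty'' you flag (behaviour as $u\to0^+$) is not the real obstruction.

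The paper avoids this by never isolating the $1/u$ term. Instead it writes $A_\beta(1)=u^{\beta+1/2}\frac{d}{du}u^{-\beta-1/2}+u$ and applies the Bessel recurrence \eqref{eq2.7} directly to $W_t^{\beta}$, obtaining the key identity \eqref{eq2.8.1.Z1},
\[
G_t^{\beta}-G_t=W_t\Big[(1-2\fb)u\big(F_\beta(\fa uv)-1\big)+\fa v\big(F_{\beta+1}(\fa uv)-1\big)\Big],
\]
where $F_\nu(z)=\sqrt{2\pi z}\,I_\nu(z)e^{-z}$. Here every summand carries $|1-2\fb|\leq\fa$ or $\fa$ explicitly, and involves $F_\nu-1$ rather than $F_\nu$; this is precisely what produces the required $e^{-(3-\eps)t}$ and the gain $(\fa uv)^{-3/4}$ in the diagonal case. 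Your splitting can be reconciled with this only after invoking the derivative identity \eqref{eq2.8.1.Z19}, $F_\beta'(z)=\frac{\beta+1/2}{z}F_\beta(z)+F_{\beta+1}(z)-F_\beta(z)$, which exactly cancels the offending $\frac{\beta+1/2}{u}W_t^{\beta}$ against the $W_t\cdot\fa v\,F_\beta'$ piece of $(\partial_u)(W_t^{\beta}-W_t)$. You should make this cancellation explicit; without it parts $(b)$, $(d)$ and $(f)$ do not close.
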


\begin{proof}[Proof of  $(a)$]
     We have that
    \begin{equation}\label{Gt}
        G_t(u,v)= \Big(\fa v + (1-2\fb )u \Big) W_t(u,v).
    \end{equation}
    Thus,
         \begin{align*}
         \left|G_t(u,v)\right| & \leq C e^{-3t} {{|u-v|+u(1-e^{-2t})}\over{1-e^{-4t}}}{{e^{-c(|u-v|^2/(1-e^{-2t})+|u+v|^2(1-e^{-2t}))}}\over{\sqrt{t}}}
        \leq C e^{-(3-\eps)t} {{e^{-c|u-v|^2/t}}\over t}.
        \end{align*}
\end{proof}

\begin{proof}[Proof of  $(b)$]
        By taking into account that
        $$A_{\beta}(1)
        = \frac{d}{du}  + u - \frac{\beta+1/2}{u}
        = u^{\beta+1/2} \frac{d}{du} u^{-\beta-1/2}+u,$$
     \eqref{eq2.7} leads to
    \begin{align}\label{Gtb}
        G_t^{\beta}(u,v)
            = &  u W_t^{\beta}(u,v) + \fa ^{\beta+1}(uv)^{\beta+1/2} \partial_{u} \Big[ e^{-\fb (u^2+v^2)}
                    \Big(\fa uv \Big)^{-\beta} I_{\beta}\Big(\fa uv \Big)\Big] \nonumber \\
            = & \fa ^{\beta+1} (uv)^{\beta+1/2} e^{-\fb (u^2+v^2)} \nonumber \\
            & \times \Big[ \Big(\fa uv \Big)^{-\beta} I_{\beta}\Big(\fa uv \Big)(1-2\fb )u  +  \Big(\fa uv \Big)^{-\beta} I_{\beta+1}\Big(\fa uv \Big)v\fa   \Big] \nonumber \\
            = & u W_t(u,v) \sqrt{2\pi \fa uv} I_{\beta}\Big(\fa uv \Big) e^{-\fa uv} \nonumber \\
            & + W_t(u,v) \Big[ -2\fb u\sqrt{2\pi \fa uv}I_{\beta}\Big(\fa uv \Big) e^{-\fa uv} \nonumber \\
            & + \fa v\sqrt{2\pi \fa uv}I_{\beta+1}\Big(\fa uv \Big)e^{-\fa uv} \Big], \quad  t,u,v \in (0,\infty).
    \end{align}
    Putting together \eqref{Gt} and \eqref{Gtb} we arrive at
    \begin{align}\label{eq2.8.1.Z1}
        G_t^{\beta}(u,v)-G_t(u,v)
            = & u W_t(u,v) \Big( \sqrt{2\pi \fa uv}I_{\beta}\Big(\fa uv \Big)e^{-\fa uv} -1\Big) \nonumber \\
            & + W_t(u,v) \Big[ -2\fb u \Big( \sqrt{2\pi \fa uv}I_{\beta}\Big(\fa uv \Big)e^{-\fa uv} -1\Big)\nonumber \\
            & + \fa  v \Big(\sqrt{2\pi \fa uv}I_{\beta+1}\Big(\fa uv \Big)e^{-\fa uv}-1\Big)\Big].
    \end{align}
    First of all, assume that $u/2<v<2u$. From \eqref{eq2.8} with $n=0$ it follows that,
        \begin{align}\label{eq2.8.1.Z7}
            \left|G_t^{\beta}(u,v)-G_t(u,v)\right|
                & \leq C  {|1-2\fb |u+\fa v  \over{(\fa u v)^{3/4}}}W_t(u,v)
                \leq C  \frac{\fa^{1/4}}{\sqrt{u}} W_t(u,v) \nonumber\\
                & \leq C  \frac{e^{-(3/2-\eps)t}}{\sqrt{u}} {{e^{-c|u-v|^2/t}}\over{t^{3/4}}}, \quad t \in B_1(u,v).
        \end{align}
        Also \eqref{eq2.7.1} leads to,
        \begin{align}\label{eq2.8.1.Z8}
             \left|G_t^{\beta}(u,v)-G_t(u,v)\right|
                &\leq C \Big(|1-2\fb |u+\fa v \Big) W_t(u,v)
                    \leq C  {\fa u  \over{(\fa u v)^{3/4}}}W_t(u,v)\nonumber \\
                & \leq C  \frac{e^{-(3/2-\eps)t}}{\sqrt{u}} {{e^{-c|u-v|^2/t}}\over{t^{3/4}}}, \quad t \in A_1(u,v).
        \end{align}
        Suppose now that $v \in (0,u/2] \cup [2u,\infty)$. By proceeding as in \eqref{eq2.8.1.Z7} and \eqref{eq2.8.1.Z8} we can obtain
         $$\left|G_t^{\beta}(u,v)-G_t(u,v)\right|
            \leq C  \fa (u+v) W_t(u,v)
            \leq C  e^{-(3-\eps)t} \max\{u,v\} {{e^{-c|u-v|^2/t}}\over{t^{3/2}}}.$$
\end{proof}

\begin{proof}[Proof of  $(c)$]
    By \eqref{Gt} we can write
    \begin{align*}
        u\partial_{u}G_t(u,v) + v\partial_{v}G_t(u,v)
            = G_t(u,v) + \Big( \fa v + (1-2\fb)u \Big) \Big( u\partial_{u}W_t(u,v) + v\partial_{v}W_t(u,v) \Big).
    \end{align*}
    Since,
    $$\fa v + (1-2\fb)u
            = \fa \Big( (v-u) + (1-e^{-2t})u\Big),$$
    Lemma~\ref{Lem G}, $(a)$, and the first estimate in \eqref{28.1} allow us to deduce the desired conclusion.
\end{proof}

\begin{proof}[Proof of  $(d)$]
        Identity \eqref{eq2.8.1.Z1} allows us to write
    \begin{align*}
         \partial_{u}[G_t^{\beta}-G_t](u,v)
        =&   (1-2\fb )\left[ W_t(u,v) \Big( \sqrt{2\pi \fa u v}I_{\beta}\Big(\fa u v \Big)e^{-\fa u v} -1 \Big) \right. \\
        & \qquad + u\partial_{u}W_t(u,v) \Big( \sqrt{2\pi \fa u v}I_{\beta}\Big(\fa u v \Big)e^{-\fa u v} -1\Big) \\
        &\left. \qquad -uW_t(u,v)\partial_{u}\Big( \sqrt{2\pi \fa u v}I_{\beta}\Big(\fa u v \Big)e^{-\fa u v}\Big)\right] \\
        &  +\fa v\left[ \partial_{u}W_t(u,v) \Big( \sqrt{2\pi \fa u v}I_{\beta+1}\Big(\fa u v \Big)e^{-\fa u v} -1\Big) \right. \\
        &\left. \qquad -W_t(u,v)\partial_{u}\Big( \sqrt{2\pi \fa u v}I_{\beta+1}\Big(\fa u v \Big)e^{-\fa u v}\Big)\right].
    \end{align*}
    By taking into account \eqref{eq2.7.1} and \eqref{eq2.8.1.Z20}  we conclude
    \begin{align*}
        \left| \partial_{u}[G_t^{\beta}-G_t](u,v) \right|
       & \leq C \fa \Big((1+v+u)W_t(u,v)+(u+v)|\partial_{u}W_t(u,v)|\Big) \\
       &\leq  C {e^{-c(u^2+v^2)/t}\over{t^{3/2}}}, \quad t\in A_1(u,v).
    \end{align*}
\end{proof}

\begin{proof}[Proof of  $(e)$]
According to \eqref{eq2.7} and the identity \eqref{eq2.8.1.Z1} we can write
    \begin{align*}
        & u \partial_{u}  \Big( G_t^{\beta}(u,v) - G_t(u,v) \Big) + v \partial_{v}  \Big( G_t(u,v) - G_t^{\beta}(u,v) \Big) \nonumber \\
        & \qquad =(1-2\fb ) \Big[ u W_t(u,v) \Big( \sqrt{2\pi \fa uv}I_{\beta}\Big(\fa uv \Big)e^{-\fa uv} -1\Big) \nonumber \\
        & \qquad \qquad \qquad+ u^2 \partial_{u}  W_t(u,v) \Big( \sqrt{2\pi \fa uv}I_{\beta}\Big(\fa uv \Big)e^{-\fa uv} -1\Big) \nonumber \\
        & \qquad \qquad \qquad- u^2  W_t(u,v) \partial_{u}  \Big( \sqrt{2\pi \fa uv}I_{\beta}\Big(\fa uv \Big)e^{-\fa uv} \Big) \nonumber \\
        & \qquad \qquad \qquad+ uv \partial_{v}  W_t(u,v) \Big( \sqrt{2\pi \fa uv}I_{\beta}G_t^{\beta}-\Big(\fa uv \Big)e^{-\fa uv} -1\Big) \nonumber \\
        & \qquad \qquad \qquad- uv  W_t(u,v) \partial_{v} \Big( \sqrt{2\pi \fa uv}I_{\beta}\Big(\fa uv \Big)e^{-\fa uv} \Big) \Big]\nonumber \\
        & \qquad \qquad + \fa  \Big[ uv \partial_{u}  W_t(u,v) \Big( \sqrt{2\pi \fa uv}I_{\beta+1}\Big(\fa uv \Big)e^{-\fa uv} -1\Big) \nonumber \\
        & \qquad \qquad \qquad- uv  W_t(u,v) \partial_{u} \Big( \sqrt{2\pi \fa uv}I_{\beta+1}\Big(\fa uv \Big)e^{-\fa uv} \Big) \nonumber \\
        & \qquad \qquad \qquad+ v W_t(u,v) \Big( 1 - \sqrt{2\pi \fa uv}I_{\beta+1}\Big(\fa uv \Big)e^{-\fa uv} \Big) \nonumber \\
        & \qquad \qquad \qquad+ v^2 \partial_{v}  W_t(u,v) \Big( \sqrt{2\pi \fa uv}I_{\beta+1}\Big(\fa uv \Big)e^{-\fa uv} -1\Big)\nonumber \\
        & \qquad \qquad \qquad- v^2  W_t(u,v) \partial_{v} \Big( \sqrt{2\pi \fa uv}I_{\beta+1}\Big(\fa uv \Big)e^{-\fa uv} \Big)\Big]  \nonumber \\
        & \qquad = \sum_{j=1}^{10} H_j(t,u,v).
    \end{align*}
    Let $t \in B_1(u,v)$. By using \eqref{eq2.8} we obtain,
    \begin{align*}
        \Big| H_1(t,u, v) + H_8(t,u,v)\Big|
            \leq C \frac{|1-2\fb|}{(\fa u v)^{3/4}} (u+v)  W_t(u,v)
            \leq C \Big( \frac{u^{1/4}}{v^{3/4}} + \frac{v^{1/4}}{u^{3/4}} \Big) \frac{e^{-c|u-v|^2/t}}{t^{3/4}}.
    \end{align*}
    Asymptotic behavior \eqref{eq2.8} leads also to
    \begin{align*}
         \Big|H_2(t,u,v) + H_4(t,u,v)\Big|
            &\leq C \frac{\fa u}{\Big(\fa uv \Big)^{3/4}} \Big| u \partial_{u}  W_t(u,v) + v \partial_{v}  W_t(u,v)  \Big| \\
            &  = C \frac{\fa ^{3/4}u^{1/4}}{v^{3/4}} [\fb (u^2+v^2)-\fa uv]e^{-\fb (u^2+v^2)+\fa uv} \\
            &  \leq C  \frac{u^{1/4}}{v^{3/4}} \frac{e^{-c|u-v|^2/t}}{t^{3/4}}.
    \end{align*}
    Also, we have that
    \begin{align*}
         \Big|H_6(t,u,v) + H_9(t,u,v)\Big|
            &\leq C \frac{\fa v}{\Big(\fa uv \Big)^{3/4}} \Big| u \partial_{u}  W_t(u,v) + v \partial_{v}  W_t(u,v)  \Big|
             \leq C  \frac{v^{1/4}}{u^{3/4}} \frac{e^{-c|u-v|^2/t}}{t^{3/4}}.
    \end{align*}
    According to \eqref{eq2.8.1.Z21} we get,
       \begin{align*}
        & \Big| H_3(t,u,v) + H_5(t,u,v) \Big|
            \leq C \frac{\fa ^2 u^2v}{\Big(\fa uv \Big)^{7/4}} W_t(u,v)
         \leq C \frac{u^{1/4}}{v^{3/4}} \frac{e^{-c|u-v|^2/t}}{t^{3/4}}.
    \end{align*}
    Similarly we can show that
    \begin{align*}
        &  \Big| H_7(t,\sqrt{a}u, \sqrt{a}v) + H_{10}(\sqrt{a}u, \sqrt{a}v,t)\Big|
         \leq C \frac{v^{1/4}}{u^{3/4}} \frac{e^{-c|u-v|^2/t}}{t^{3/4}}.
    \end{align*}
\end{proof}

\begin{proof}[Proof of  $(f)$]
    According to \eqref{eq2.8.1.Z18} and by using \eqref{eq2.8}, \eqref{eq2.7.1}, \eqref{eq2.8.1.Z20} and  \eqref{eq2.8.1.Z21} we obtain
    \begin{align*}
        & \left| u \partial_{u}  \Big( G_t(u,v) - G_t^{\beta}(u,v) \Big) + v \partial_{v}  \Big( G_t(u,v) - G_t^{\beta}(u,v) \Big) \right| \nonumber \\
        & \qquad \leq C \fa  \Big\{ (u+v) W_t(u,v) + u^2 |\partial_{u} W_t(u,v)| + v^2 |\partial_{v} W_t(u,v)|
                    + uv (|\partial_{u} W_t(u,v)| + |\partial_{v} W_t(u,v)|)\Big\}  \nonumber \\
        & \qquad \leq  \frac{C}{t^{3/2}}\left[u+v+{{u^2+v^2}\over{\sqrt{t}}}\right] e^{-c|u-v|^2/t}
         \leq  \frac{C}{t}\left\{
            \begin{array}{l}
            e^{-cu^2/t},\;\;0<v<u/2, \\
            \\
            e^{-cv^2/t},\;\;0<2u<v.
          \end{array}\right.
    \end{align*}
\end{proof}

\begin{proof}[Proof of  $(g)$]
 We have that
     \begin{align}\label{eqG28}
         \partial_{u}^2 [ G_t^{\beta}- G_t](u,v)
        =&   (1-2\fb ) \Big\{ 2 \partial_{u} \Big[ W_t(u,v)  \Big( \sqrt{2\pi \fa u v} I_{\beta}\Big(\fa u v \Big) e^{-\fa u v} -1 \Big) \Big] \nonumber \\
         & \qquad + u \Big[ \partial^2_{u} W_t(u,v) \Big( \sqrt{2\pi \fa u v} I_{\beta}\Big(\fa u v \Big) e^{-\fa u v} -1\Big) \nonumber \\
        &  \qquad - 2 \partial_{u} W_t(u,v) \partial_{u} \Big( \sqrt{2\pi \fa u v} I_{\beta}\Big(\fa u v \Big) e^{-\fa u v} \Big) \nonumber \\
        &  \qquad - W_t(u,v) \partial^2_{u} \Big( \sqrt{2\pi \fa u v} I_{\beta}\Big(\fa u v \Big) e^{-\fa u v} \Big)\Big] \Big\} \nonumber \\
        &  + \fa  v \Big\{ \partial^2_{u} W_t(u,v) \Big( \sqrt{2\pi \fa u v} I_{\beta+1}\Big(\fa u v \Big) e^{-\fa u v} -1\Big) \nonumber \\
        &  \qquad - 2 \partial_{u} W_t(u,v) \partial_{u} \Big( \sqrt{2\pi \fa u v} I_{\beta+1}\Big(\fa u v \Big) e^{-\fa u v} \Big) \nonumber \\
        &  \qquad - W_t(u,v) \partial^2_{u} \Big( \sqrt{2\pi \fa u v} I_{\beta+1}\Big(\fa u v \Big) e^{-\fa u v} \Big)\Big\}.
    \end{align}
    By using \eqref{eq2.7} and \eqref{22b} we get
    \begin{equation}\label{eqG29}
        \frac{d^2}{dz^2} \Big[ \sqrt{z} I_{\beta}(z)e^{-z} \Big]
            = e^{-z} \Big[ \Big( \frac{4\beta^2-1}{4z^2} + 2 - \frac{2\beta + 1}{z}\Big)  \sqrt{z} I_{\beta}(z) -  2\sqrt{z} I_{\beta+1}(z)\Big], \quad  z \in (0,\infty).
    \end{equation}
    According to \eqref{eq2.8} it follows that
    \begin{equation}\label{eqG30}
        \frac{d^2}{dz^2} \Big[ \sqrt{z} I_{\beta}(z)e^{-z} \Big]
            = \mathcal{O}\Big( \frac{1}{z^3}\Big), \quad  z \in (0,\infty).
    \end{equation}
    Equalities \eqref{eq2.8}, \eqref{eq2.8.1.Z21}, \eqref{eqG28}, \eqref{eqG30} and Lemma~\ref{Lem W}, $(c)$, lead to
    \begin{align*}
     \Big| u  \partial_{u}^2 [ G_t^{\beta}- G_t](u,v) \Big|
        \leq  &   C u \fa  \Big\{ |\partial_{u} W_t(u,v)| \frac{1}{\Big(\fa u v \Big)^{1/2}} +  W_t(u,v) \frac{\fa v}{\fa u v}  \\
    &  + u |\partial^2_{u} W_t(u,v)|  \frac{1}{\fa u v} + u |\partial_{u} W_t(u,v)| \frac{\fa v}{\Big(\fa u v \Big)^{3/2}}
            + u W_t(u,v) \frac{(\fa v)^2}{\Big(\fa u v \Big)^2}\\
    &   + v  |\partial^2_{u} W_t(u,v)| \frac{1}{\fa u v} + v |\partial_{u} W_t(u,v)| \frac{\fa v}{\Big(\fa u v \Big)^{3/2}}
            + v W_t(u,v) \frac{(\fa v)^2}{\Big(\fa u v \Big)^2} \Big\}\\
   \leq &   C \frac{e^{-c|u-v|^2/t}}{t^{3/2}}, \quad  t \in B_1(u,v), \ 0<u/2<v<2u.
    \end{align*}
    Analogously, when $0<v<u/2$ or $0<2u<v$, we get
     \begin{align*}
         \Big| u  \partial_{u}^2 [ G_t^{\beta}- G_t](u,v) \Big|
       \leq &       C u \fa  \Big\{ |\partial_{u} W_t(u,v)| +  W_t(u,v) \fa v + u |\partial^2_{u} W_t(u,v)|  \\
        &  + u |\partial_{u} W_t(u,v)| \fa v + u W_t(u,v) (\fa v)^2 + v  |\partial^2_{u} W_t(u,v)| \\
        &  + v |\partial_{u} W_t(u,v)| \fa v + v W_t(u,v) (\fa v)^2 \Big\}\\
       \leq &    C \frac{u}{t} \Big\{ \frac{1}{t} + \frac{u+v}{t^{3/2}} + \frac{uv+v^2}{t^{2}} + \frac{uv^2+v^3}{t^{5/2}} \Big\} e^{-c|u-v|^2/t} \\
       \leq &    C \frac{e^{-c|u-v|^2/t}}{t^{3/2}}, \quad  t \in B_1(u,v).
    \end{align*}
    On the other hand, from \eqref{eq2.7.1}, \eqref{eq2.8.1.Z20} and \eqref{eqG29} we deduce, for every $z\in(0,1]$,
    $$\Big|\frac{d^\ell}{dz^\ell} \Big( \sqrt{z} I_{\beta}(z)e^{-z}\Big) \Big| \leq C, \quad \ell =0,1,
        \qquad \Big| \frac{d^2}{dz^2} \Big( \sqrt{z} I_{\beta}(z)e^{-z}\Big) \Big| \leq \frac{C}{z}.$$
    Hence,
    \begin{align*}
         \Big| u \partial_{u}^2 [ G_t^{\beta}- G_t](u,v) \Big|
      \leq  &   C \fa  \Big\{ (1+\fa v^2)W_t(u,v) + (u+v)|\partial_{u} W_t(u,v)| + (u^2+v^2)|\partial_{u}^2 W_t(u,v)|   \Big\}\\
       \leq &    C \frac{e^{-c(u^2+v^2)/t}}{t^{3/2}}, \quad t \in A_1(u,v).
    \end{align*}
\end{proof}

\subsection{The proof of the Theorem}

In this section we prove that the Riesz transform $R_{\alpha, 1}$ is bounded from $L^p((0,\infty)^m\times\R)$ into itself, for every $1<p<\infty$.
The boundedness of the other Riesz Transforms can be showed in a similar way.

The operator $R_{\alpha, 1}$ is defined by
$$R_{\alpha, 1}(f)(x,y)
    =\mathcal{F}_2^{-1}(R_{\alpha, 1}(|u|)(\mathcal{F}_2(f))(x,u))(y), \quad f\in L^2((0,\infty)^m\times\R).$$
Let $1<p<\infty$. We identify $L^p((0,\infty)^m\times\R)$ with $L^p(\R,L^p((0,\infty)^m))$. The Riesz transform $R_{\alpha, 1}$ can be understood
as a Banach valued Fourier multiplier. Indeed, let $f\in C_c^\infty((0,\infty)^m\times\R)\subseteq S(\R,L^p((0,\infty)^m))$,
where $S(\R,L^p((0,\infty)^m))$ denotes the $L^p((0,\infty)^m)$-valued Schwartz functions space.
Then, $\mathcal{F}(f)\in  S(\R,L^p((0,\infty)^m))$;  being $\mathcal{F}$ the ($L^p((0,\infty)^m)$-valued) Fourier transform.
By using Plancherel equality for Laguerre functions expansion we obtain
\begin{equation}\label{A1}
    \|R_{\alpha, 1}(a)\|_{L^2((0,\infty)^m)\rightarrow L^2((0,\infty)^m)}\leq 1, \quad a>0.
\end{equation}
Let $a>0$. From \eqref{eq2.2} we get, for every $g\in C_c^\infty((0,\infty)^m)$,
$$R_{\alpha, 1}(a)(g)(x)
    =\int_0^{\infty}{ R_{\alpha, 1}(x,y;a)g(y)dy}, \quad \mbox{a.e.}\;x\notin supp\;g.$$
Moreover, we have that for $ x,y \in (0,\infty)^m$ and $t>0$,
\begin{align*}
    & A_{\alpha_1}(a)\mathcal{W}_t^{\alpha}(x,y;a)= A_{\alpha_1}(a)W_{ta}^{\alpha_1}(\sqrt{a}x_1,\sqrt{a}y_1)a^{m/2}\prod_{j=2}^m{W_{ta}^{\alpha_j}(\sqrt{a}x_j,\sqrt{a}y_j)} \\
    & \qquad \qquad = a^{(m+1)/2}\left({d\over{d(\sqrt{a}x_1)}}+\sqrt{a}x_1-{{\alpha_1 +{1/2}}\over{\sqrt{a}x_1}}\right)\left[W_{ta}^{\alpha_1}(\sqrt{a}x_1,\sqrt{a}y_1)\right] \prod_{j=2}^m{W_{ta}^{\alpha_j}(\sqrt{a}x_j,\sqrt{a}y_j)} \\
    & \qquad \qquad = a^{(m+1)/2}\big[A_{\alpha_1}(1)W_{ta}^{\alpha_1}(\tilde{x}_1,\tilde{y}_1)\big]_{|\tilde{x}_1=\sqrt{a}x_1, \; \tilde{y}_1=\sqrt{a}y_1}\prod_{j=2}^m{W_{ta}^{\alpha_j}(\sqrt{a}x_j,\sqrt{a}y_j)}.
\end{align*}
Then,
\begin{align*}
    R_{\alpha, 1}(x,y;a)
    & = {{a^{(m+1)/2}}\over{\sqrt{\pi}}}\int_0^{\infty}{\big[A_{\alpha_1}(1)W_{ta}^{\alpha_1}(\tilde{x}_1,\tilde{y}_1)\big]\prod_{j=2}^m{W_{ta}^{\alpha_j}(\tilde{x}_j,\tilde{y}_j)}_{|\tilde{x}=\sqrt{a}x, \; \tilde{y}=\sqrt{a}y}{{dt}\over{\sqrt{t}}}} \\
    & = {{a^{m/2}}\over{\sqrt{\pi}}}\int_0^{\infty}{\big[A_{\alpha_1}(1)W_{s}^{\alpha_1}(\tilde{x}_1,\tilde{y}_1)\big]\prod_{j=2}^m{W_{s}^{\alpha_j}(\tilde{x}_j,\tilde{y}_j)}_{|\tilde{x}=\sqrt{a}x, \; \tilde{y}=\sqrt{a}y}{{ds}\over{\sqrt{s}}}} \\
    & = a^{m/2} R_{\alpha, 1}(\sqrt{a}x,\sqrt{a}y;1), \quad x,y \in (0,\infty)^m, \quad x \neq y.
\end{align*}

Since $R_{\alpha, 1}(x,y;1)$ is a standard Calderón-Zygmund kernel \cite[Proposition 3.1]{NoSt1} we can obtain
\begin{equation}\label{A2}
    |R_{\alpha, 1}(x,y;a)|
        \leq {C\over{|x-y|^m}}, \quad x,y\in (0,\infty)^m,\;x\neq y,
\end{equation}
and
\begin{equation}\label{A3}
    |{\nabla}_xR_{\alpha, 1}(x,y;a)|+|{\nabla}_yR_{\alpha}(x,y;a)|
        \leq {C\over{|x-y|^{m+1}}}, \quad x,y\in (0,\infty)^m,\;x\neq y,
\end{equation}
where the constant $C>0$ does not depend on $a$.

By using Calderón-Zygmund theory we conclude that $R_{\alpha, 1}$ can be extended from
$L^2((0,\infty)^m)\cap L^p((0,\infty)^m)$ to $L^p((0,\infty)^m)$ as a bounded operator from $L^p((0,\infty)^m)$ into itself (as it was said earlier) and
$$\|R_{\alpha, 1}(a)\|_{L^p((0,\infty)^m)\rightarrow L^p((0,\infty)^m)}\leq C,$$
where $C>0$ does not depend on $a$.

We deduce that $R_{\alpha, 1}(|u|)(\mathcal{F}(f)(u))\in L^1(\R,L^p((0,\infty)^m))$ and then
$\mathcal{F}^{-1}(R_{\alpha, 1}(|u|)(\mathcal{F}(f)(u)))\in L^{\infty}(\R,L^p((0,\infty)^m))$. Furthermore, we have that
\begin{equation}\label{eq2.4}
    R_{\alpha, 1}(f)=\mathcal{F}^{-1}(R_{\alpha, 1}(|u|)(\mathcal{F}(f)(u))).
\end{equation}
Indeed, suppose that $g\in  L^1(\R,L^p((0,\infty)^m))$. We understand $g$ as a function defined in $\R \times (0,\infty)^m$.
Let $h\in L^{p'}((0,\infty)^m)$, where $p'=p/(p-1)$. By using some properties of the Bochner integral and Hölder inequality we get
\begin{align*}
    \int_{(0,\infty)^m}{h(z)\left(\int_{\R}{g(y,\cdot)e^{-ixy}dy}\right)(z)dz}
    & =\int_{\R}{\int_{(0,\infty)^m}{h(z)g(y,z)dz}e^{-ixy}dy} \\
    & =\int_{(0,\infty)^m}{h(z)\int_{\R}{g(y,z)e^{-ixy}dy}dz}, \quad x\in\R.
\end{align*}
Then, for every $x\in\R$,
$$\left(\int_{\R}{g(y,\cdot)e^{-ixy}dy}\right)(z)
    =\int_{\R}{g(y,z)e^{-ixy}dy}, \quad \mbox{a.e.}\;z\in (0,\infty)^m.$$
According to \eqref{eq2.4}, since $L^p((0,\infty)^m)$ is a UMD Banach space, we use \cite[Theorem 3.4]{We} to show that $R_{\alpha, 1}$
defines a bounded operator from $L^p(\R,L^p((0,\infty)^m))$ into itself.
It is sufficient to see that the families of operators
$$ \Big\{R_{\alpha, 1}(u)\Big\}_{u>0} \quad  \text{and} \quad  \Big\{u \frac{d}{du}R_{\alpha, 1}(u)\Big\}_{u>0}$$
are $R$-bounded in $L^p((0,\infty)^m)$. It is well-known that if $\{T(u)\}_{u>0}$ is a set
of bounded operators in $L^p((0,\infty)^m)$, then $\mathcal{T}=\{T(u)\}_{u>0}$ is $R$-bounded in $L^p((0,\infty)^m)$
if, and only if, there exists $C>0$ such that
\begin{equation}\label{eq2.22}
    \Big\| \Big( \sum_{j=1}^N |T(u_j)g_j|^2 \Big)^{1/2} \Big\|_{L^p((0,\infty)^m)}
        \leq C \Big\| \Big( \sum_{j=1}^N |g_j|^2 \Big)^{1/2} \Big\|_{L^p((0,\infty)^m)},
\end{equation}
for every sequences $(u_j)_{j=1}^N \subset (0,\infty)$, $(g_j)_{j=1}^N \subset L^p((0,\infty)^m)$ and $N\in\N$.

Suppose that $\{T(u)\}_{u>0}$ is a family of bounded operators in $L^2((0,\infty)^m)$ such that
$$\sup_{u>0} \|T(u)\|_{\mathcal{L}(L^2((0,\infty)^m))}<\infty,$$
being $\mathcal{L}(L^2((0,\infty)^m))$ the space of bounded linear mappings from $L^2((0,\infty)^m)$ into itself.
Moreover, assume that, for every $u>0$ and $g \in C_c^\infty((0,\infty)^m)$,
$$T(u)g(x)
    = \int_{(0,\infty)^m} K_u(x,y) g(y)dy, \quad  x \notin \supp(g),$$
where
$$\sup_{u>0} |K_u(x,y)|
    \leq \frac{C}{|x-y|^m}, \quad  x,y \in (0,\infty)^m, x \neq y,$$
and
$$\sup_{u>0} \Big(|\nabla_x K_u(x,y)| + |\nabla_y K_u(x,y)| \Big)
    \leq \frac{C}{|x-y|^{m+1}}, \quad  x,y \in (0,\infty)^m, x \neq y.$$
Then, by \cite[Theorem 1.3 Chapter XII]{Tor}, \eqref{eq2.22} holds for every $(u_j)_{j=1}^N \subset (0,\infty)$,
$(g_j)_{j=1}^N \subset L^p((0,\infty)^m)$ and $N\in\N$.

\begin{Lem}\label{Lem p.18}
    Let $\alpha \in(-1/2,\infty)^m$. The family of operators
    $\{R_{\alpha, 1}(u)\}_{u>0}$ is $R$-bounded in \newline $L^p((0,\infty)^m)$.
\end{Lem}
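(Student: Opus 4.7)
The plan is to verify that the family $\{R_{\alpha,1}(u)\}_{u>0}$ satisfies, uniformly in $u>0$, the three hypotheses required to invoke the Torchinsky-type criterion cited in the paragraph immediately preceding the lemma, and then conclude $R$-boundedness in $L^p((0,\infty)^m)$ from that theorem.

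First, I would recall the uniform $L^2$ bound \eqref{A1}, which gives $\|R_{\alpha,1}(u)\|_{L^2\to L^2}\leq 1$ for every $u>0$. Next, I would exploit the scaling identity
$$R_{\alpha,1}(x,y;a) = a^{m/2}\, R_{\alpha,1}(\sqrt{a}\,x,\sqrt{a}\,y;1), \quad x,y\in (0,\infty)^m,\ x\neq y,$$
established in the preceding computation, combined with the Calderón--Zygmund kernel bounds \eqref{A2} and \eqref{A3} for $R_{\alpha,1}(\cdot,\cdot;1)$, to obtain the uniform estimates
$$\sup_{u>0} |R_{\alpha,1}(x,y;u)| \leq \frac{C}{|x-y|^m}, \qquad \sup_{u>0}\bigl(|\nabla_x R_{\alpha,1}(x,y;u)|+|\nabla_y R_{\alpha,1}(x,y;u)|\bigr) \leq \frac{C}{|x-y|^{m+1}},$$
for every $x,y\in(0,\infty)^m$ with $x\neq y$. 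Note that the $a^{m/2}$ in the size estimate is exactly cancelled by the factor $|\sqrt{a}\,x-\sqrt{a}\,y|^{-m}=a^{-m/2}|x-y|^{-m}$ coming from the scaling of \eqref{A2}, and the analogous cancellation occurs for the gradient bound; hence the constant $C$ in both estimates is independent of $u$.

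Finally, I would apply the result from \cite[Theorem 1.3, Chapter XII]{Tor} stated just above the lemma: since $\{R_{\alpha,1}(u)\}_{u>0}$ is uniformly $L^2$-bounded and its integral kernels satisfy the uniform size and regularity estimates displayed above, the square-function inequality \eqref{eq2.22} holds for the family with a constant independent of the choice of $(u_j)_{j=1}^N\subset(0,\infty)$, $(g_j)_{j=1}^N\subset L^p((0,\infty)^m)$, and $N\in\N$. This is precisely the definition of $R$-boundedness in $L^p((0,\infty)^m)$, completing the proof. There is no genuine obstacle here: the lemma is essentially a packaging statement, and the substantive work—the uniform-in-$a$ Calderón--Zygmund estimates for the kernels $R_{\alpha,1}(x,y;a)$—has already been carried out in the body of the section via the scaling argument and the results of \cite{NoSt1}.
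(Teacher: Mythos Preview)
Your proposal is correct and follows precisely the paper's own argument: the paper's proof simply invokes the preceding observation together with \eqref{eq2.2}, \eqref{A1}, \eqref{A2} and \eqref{A3}, which is exactly the uniform $L^2$ bound plus uniform Calder\'on--Zygmund kernel estimates (via scaling) feeding into \cite[Theorem 1.3, Chapter XII]{Tor}. There is nothing to add.
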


\begin{proof}
    It is enough to take into account the above observation and \eqref{eq2.2}, \eqref{A1}, \eqref{A2} and \eqref{A3}.
\end{proof}
The proof of the differentiability of the map
\begin{align*}
        (0,\infty) & \longrightarrow \mathcal{L}(L^p((0,\infty)^m)) \\
         u & \longmapsto R_{\alpha, 1}(u)
    \end{align*}
and the $R$-boundedness of the family $\{u \frac{d}{du}R_{\alpha, 1}(u)\}_{u>0}$ are more involved. In order to show these last properties we will use some results established in \cite{JST}.

\begin{Lem}
Let $\alpha \in [1/2,\infty)^m$. The function
  \begin{align*}
        R_{\alpha, 1} : (0,\infty) & \longrightarrow \mathcal{L}(L^p((0,\infty)^m)) \\
        a & \longmapsto R_{\alpha, 1}(a)
    \end{align*}
    is differentiable.
\end{Lem}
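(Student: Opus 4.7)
The plan is to identify the derivative by formally differentiating the integral kernel of $R_{\alpha,1}(a)$, and then to reduce the required operator-norm convergence to two complementary inputs: the Hermite-Grushin differentiability proved by Jotsaroop, Sanjay and Thangavelu in \cite{JST} for the ``free'' part, and the pointwise kernel-difference estimates of Lemmas~\ref{Lem W} and \ref{Lem G} for the Bessel-type perturbation.

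First I would use the scaling identity $R_{\alpha,1}(x,y;a)=a^{m/2}R_{\alpha,1}(\sqrt{a}x,\sqrt{a}y;1)$ established earlier in this section, so that all the $a$-dependence is concentrated in a single dilation of a fixed Calder\'on-Zygmund kernel. Formal differentiation in $a$ produces an expression of the form $\tfrac{m}{2a}R_{\alpha,1}(a)+(2\sqrt{a})^{-1}a^{m/2}\big[\sqrt{a}\,x\cdot\nabla_{1}+\sqrt{a}\,y\cdot\nabla_{2}\big]R_{\alpha,1}(\sqrt{a}x,\sqrt{a}y;1)$, which is the natural candidate for the derivative operator; the task is to justify the interchange of $\partial_a$ with the spatial integration against $f\in L^p((0,\infty)^m)$ in the $\mathcal{L}(L^p)$-norm.

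Next I would decompose $R_{\alpha,1}(a)=R_{1}(a)+\Delta_{\alpha,1}(a)$, where $R_{1}(a)$ denotes the Hermite-Riesz transform with the kernel obtained by substituting each Laguerre factor $W_t^{\alpha_j}$ and $G_t^{\alpha_1}$ by its Hermite analogue $W_t$ and $G_t$, and acting on $L^p((0,\infty)^m)$ via extension of functions by zero to $\R^m$ followed by restriction. The differentiability of $a\mapsto R_{1}(a)$ in $\mathcal{L}(L^p(\R^m))$ is precisely what is handled by \cite[Section~2]{JST}, and this restricts to give differentiability of $a\mapsto R_{1}(a)$ in $\mathcal{L}(L^p((0,\infty)^m))$. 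For the perturbation $\Delta_{\alpha,1}(a)$ I would exploit the exponential-in-$t$ decay of the kernel differences recorded in Lemmas~\ref{Lem W}\,(b),(f) and \ref{Lem G}\,(b),(e),(f),(g): the extra factors $e^{-(1-\eps)t}$ and $e^{-(3-\eps)t}$ absent in the raw Hermite kernels both legitimise the dominated-convergence argument that differentiates under the integral in $a$, and provide pointwise bounds on $\partial_a\Delta_{\alpha,1}(x,y;a)$ of the Calder\'on-Zygmund type \eqref{A2}--\eqref{A3} with constants uniform in $a$ on compact subsets of $(0,\infty)$. The Calder\'on-Zygmund theorem then furnishes $L^p$-boundedness of $\partial_a\Delta_{\alpha,1}(a)$ locally uniformly in $a$, and the pointwise identification of the kernel limit upgrades to convergence of the difference quotients in $\mathcal{L}(L^p)$.

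The main obstacle will be the short-$t$ regime, corresponding to the set $B_1(\sqrt{a}u,\sqrt{a}v)$ where $\fa\sqrt{a}u\sqrt{a}v\geq 1$: when $\partial_a$ hits the rescaled arguments $\sqrt{a}u$, $\sqrt{a}v$ inside the Bessel-function factor, the chain rule contributes extra $\sqrt{a}\,u$ and $\sqrt{a}\,v$ terms which are large precisely where the raw kernel-difference bound is weakest. The refined estimates of Lemma~\ref{Lem G}\,(e) and (g), which produce the absorbing factors $u^{1/4}/v^{3/4}+v^{1/4}/u^{3/4}$ and $t^{-3/2}$, are designed exactly to compensate this loss after integration against $dt/\sqrt{t}$, in direct parallel with the Hermite treatment in \cite[Section~2]{JST}. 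Verifying that these compensations indeed close the Calder\'on-Zygmund bounds for $\partial_a\Delta_{\alpha,1}$ will be the technical heart of the argument.
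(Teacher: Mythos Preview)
Your overall strategy---split $R_{\alpha,1}(a)=R_1(a)+\Delta_{\alpha,1}(a)$, invoke \cite{JST} for the Hermite part, and treat the perturbation via the kernel-difference estimates of Lemmas~\ref{Lem W} and \ref{Lem G}---is exactly the paper's approach. Two points deserve comment.

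First, a methodological difference. For the $L^p$-boundedness of $\Delta_{\alpha,1}(a)$ and of its candidate derivative, the paper does \emph{not} go through Calder\'on--Zygmund theory. Instead it observes that the difference kernels are absolutely integrable against $|g|$ and are dominated, after integration in $t$, by products of one-dimensional Hardy-type operators $H_0$, $H^\infty$, $N$ in one variable and a heat-maximal operator in the remaining $m-1$ variables; the $L^p$ bounds then follow directly, with explicit dependence $C/a$. Your CZ route is viable (the paper itself uses it later, in Lemma~\ref{Lem p.11}, to establish the standard kernel bounds \eqref{eqG0} and \eqref{eqG12} for $R$-boundedness), but note that CZ theory requires an $L^2$ input for $\partial_a\Delta_{\alpha,1}(a)$ that you have not supplied; in the paper this $L^2$ bound is a by-product of the Hardy/maximal argument at $p=2$.

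Second, and more seriously, there is a genuine gap in your last step. Having $\partial_a\Delta_{\alpha,1}(a)$ bounded on $L^p$ locally uniformly in $a$ does \emph{not} by itself give convergence of the difference quotients in $\mathcal{L}(L^p)$; the sentence ``the pointwise identification of the kernel limit upgrades to convergence of the difference quotients'' is precisely where the work lies. The paper closes this by proving in addition a \emph{second-derivative} bound,
\[
\Big\|\int_{(0,\infty)^m}\partial_a^2\big(R_{\alpha,1}(x,y;a)-R_1(x,y;a)\big)g(y)\,dy\Big\|_{L^p}\le \frac{C}{a^{3/2}}\|g\|_{L^p},
\]
and then writing
\[
\frac{\Delta_{\alpha,1}(a+h)-\Delta_{\alpha,1}(a)}{h}-d_{\alpha,1}(a)
=\frac{1}{h}\int_a^{a+h}\!\!\int_a^{\lambda}\partial_z^2\big(\cdots\big)\,dz\,d\lambda,
\]
which visibly has $L^p\to L^p$ norm $O(|h|)$. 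Some quantitative continuity of $a\mapsto\partial_a\Delta_{\alpha,1}(a)$---the second-derivative estimate being the cleanest---is unavoidable here, and your proposal should make this explicit.
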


\begin{proof}

    In \cite{JST} it was shown that the function
    \begin{align*}
        R_1: (0,\infty) &\longrightarrow \mathcal{L}(L^p(\R^m)) \\
         a&\longmapsto R_1(a)
    \end{align*}
    is differentiable. By identifying $g\in L^p((0,\infty)^m)$ with $g_0\in L^p(\R^m)$ defined by
    $$g_0(x)=\left\{
        \begin{array}{ll}
            g(x), & x_j>0,\;j=1,...,m \\
            0,    & x_j\leq 0,\;\mbox{for some}\;j=1,...,m,
        \end{array}\right.$$
    we have that
    \begin{align*}
        R_1 :  (0,\infty) & \longrightarrow \mathcal{L}(L^p((0,\infty)^m)) \\
        a & \longmapsto R_1(a)
    \end{align*}
    is differentiable.
    In order to show that $R_{\alpha, 1}$ is differentiable we will prove that $D_{\alpha, 1}=R_{\alpha, 1}-R_1$ is differentiable.
    We divide this proof in three steps.\\

    \noindent \textbf{Step 1}: In this first step we show that the operator $D_{\alpha, 1}(a)$ is bounded from $L^p((0,\infty)^m)$ into itself,
    for every $a>0$.\\

    Let $a>0$ and $g\in L^p((0,\infty)^m)$.
    Recall that
    $$G_t^{\alpha_1}(x_1,y_1)=A_{\alpha_1}(1)W_t^{\alpha_1}(x_1,y_1),\;\;\;x_1,y_1,t\in(0,\infty),$$
    and
     $$G_t(x_1,y_1)=A(1)W_t(x_1,y_1),\;\;\;x_1,y_1,t\in(0,\infty).$$
    We can write
    \begin{align*}
         D_{\alpha, 1}(x,y;a)
         =&R_{\alpha, 1}(x,y;a)- R_1(x,y;a)
            =  a^{m/2}[R_{\alpha, 1}(\sqrt{a}x,\sqrt{a}y;1)-R_1(\sqrt{a}x,\sqrt{a}y;1)] \\
          = &{{a^{m/2}}\over{\sqrt{\pi}}}\int_0^{\infty}{[(A_{\alpha_1}(1)\mathcal{W}_t^{\alpha})(\sqrt{a}x,\sqrt{a}y)-(A_1(1)\mathcal{W}_t)(\sqrt{a}x,\sqrt{a}y)]{{dt}\over{\sqrt{t}}}} \\
          = &{{a^{m/2}}\over{\sqrt{\pi}}}\left\{\int_0^{\infty}{[G_t^{\alpha_1}-G_t](\sqrt{a}x_1,\sqrt{a}y_1)\prod_{j=2}^m{W_t^{\alpha_j}(\sqrt{a}x_j,\sqrt{a}y_j)}{{dt}\over{\sqrt{t}}}} \right. \\
        & + \sum_{i=2}^m \int_0^{\infty}{G_t(\sqrt{a}x_1,\sqrt{a}y_1)\prod_{j=2}^{i-1}{W_t(\sqrt{a}x_j,\sqrt{a}y_j)}}  \\
        & \times \left.[W_t^{\alpha_i}(\sqrt{a}x_i,\sqrt{a}y_i)-W_t(\sqrt{a}x_i,\sqrt{a}y_i)]\prod_{j=i+1}^m{W_t^{\alpha_j}(\sqrt{a}x_j,\sqrt{a}y_j)}{{dt}\over{\sqrt{t}}}\right\} \\
          =& \sum_{i=1}^m{M_i(x,y;a)}, \quad x , y \in (0,\infty)^m.
    \end{align*}
    Note that we have apply the identity,
    $$\prod_{j=1}^\ell a_j - \prod_{j=1}^\ell b_j
        = \sum_{i=1}^\ell \Big(\prod_{j=1}^{i-1} b_j \Big) [a_i-b_i] \Big(\prod_{j=i+1}^{m} a_j\Big), \quad a_j, b_j \in \mathbb{R}, \ \ell \in \mathbb{N},$$
    which allows us to compare the terms of each product one by one.

    We now prove that
    \begin{equation}\label{14.1}
        \int_{(0,\infty)^m}{|D_{\alpha, 1}(x,y;a)||g(y)|dy}<\infty,\;\;\;\;x\in(0,\infty)^m.
    \end{equation}

    According to Lemma~\ref{Lem G}, $(b)$, we have that
    \begin{align*}
        & a^{1/2}\int_0^{\infty}{[G_t^{\alpha_1}-G_t](\sqrt{a}x_1,\sqrt{a}y_1){{dt}\over{\sqrt{t}}}} \\
        & \qquad \qquad \leq C\left\{\begin{array}{l}
        \displaystyle{1\over{x_1}}, \quad 0<y_1 \leq{{x_1}\over 2}<\infty, \\
        \\
        \displaystyle{1\over{x_1}}\left(1+\sqrt{{x_1}\over{|x_1-y_1|}}\right), \quad 0<{{x_1}\over 2}<y_1<2x_1<\infty, \\
        \\
        \displaystyle{1\over{y_1}}, \quad 0<2x_1 \leq y_1<\infty,
        \end{array}\right.
    \end{align*}
    where $C>0$ does not depend on $a$.

    From now on, to abbreviate notation, if $x=(x_1, \dots, x_m) \in (0,\infty)^m$,
    we simply call $\bar{x}_j=(x_1, \dots, x_{j-1},x_{j+1}, \dots, x_m) \in (0,\infty)^{m-1}$, $j =1, \dots, m$.
    Hence, from Lemma~\ref{Lem W}, $(a)$, we get
    \begin{align*}
        &\int_{(0,\infty)^m} {|M_1(x,y;a)||g(y)|dy}
            \leq C \left[\int_0^{x_1/2}{{1\over{x_1}}}+\int_{x_1/2}^{2x_1}{{1\over{x_1}}\left(1+\sqrt{{x_1}\over{|x_1-y_1|}}\right)} +\int_{2x_1}^{\infty}{{1\over{y_1}}}\right] \\
        & \qquad \qquad \times \Big\{\sup_{t>0}\int_{(0,\infty)^{m-1}}{a^{(m-1)/ 2} \frac{e^{-ca|\bar{x}_1-\bar{y}_1|^2/t}}{t^{(m-1)/2}}|g(y)|}d\bar{y}_1 \Big\}dy_1.
     \end{align*}
    We consider the maximal operator
    $$\frak{W}_*(h)(x)
        =\sup_{t>0}\left|\int_{(0,\infty)^{m-1}} \frac{e^{-ca|x-y|^2/t}}{t^{(m-1)/2}} h(y)dy\right|,\;\;\;x\in(0,\infty)^{m-1},$$
    which is bounded from $L^p((0,\infty)^{m-1})$ into itself for each $1<p<\infty$.

     Moreover, the Hardy operators \cite[p. 244, (9.9.1) and (9.9.2)]{HLP}
     $$H_0(F)(x)
        ={1\over x}\int_0^x{F(y)dy}\;\;\;\mbox{and}\;\;\;H^{\infty}F(x)=\int_x^{\infty}{{F(y)\over y}dy},\;\;\;x\in(0,\infty),$$
     and the operator
     $$N(F)(x)
        ={1\over x}\int_{x/2}^{2x}{\left(1+\sqrt{x\over{|x-y|}}\right)F(y)dy},\;\;\;x\in (0,\infty),$$
     are bounded from $L^p(0,\infty)$ into itself for each $1<p<\infty$. Then
     $$\int_{(0,\infty)^m} {|M_1(x,y;a)||g(y)|dy}<\infty,\;\;\;\mbox{a.e.}\;x\in(0,\infty)^m.$$

     Next we study $M_2$. From Lemma~\ref{Lem W}, $(b)$, we obtain
    \begin{align*}
        & \int_0^{\infty}\left|W_t^{\alpha_2}(\sqrt{a}x_2,\sqrt{a}y_2)-W_t(\sqrt{a}x_2,\sqrt{a}y_2)\right|{{dt}\over t}
            \leq C\left(\int_0^{x_2y_2a}{1\over{\sqrt{t}x_2y_2a}}dt + \int_{x_2y_2a}^{\infty}{1\over{t^{3/2}}}dt\right)\\
        & \qquad \leq {C\over{\sqrt{x_2y_2a}}},\;\;\;x_2,y_2,a\in(0,\infty),
    \end{align*}
and
\begin{align*}
        & \int_0^{\infty}\left|W_t^{\alpha_2}(\sqrt{a}x_2,\sqrt{a}y_2)-W_t(\sqrt{a}x_2,\sqrt{a}y_2)\right|{{dt}\over t}
            \leq C\int_0^{\infty}{{e^{-ca|x_2-y_2|^2/t}}\over{t^{3/2}}}dt \\
        & \qquad \leq {C\over{a^{1/2}|x_2-y_2|}}
            \leq C\left\{\begin{array}{l}
                \displaystyle {1\over{\sqrt{a}x_2}},\;\;0<y_2<x_2/2 \\
                \\
                \displaystyle {1\over{\sqrt{a}y_2}},\;\;2x_2<y_2<\infty.
         \end{array}\right.
            \end{align*}

  The above estimates allow us to write, for every $x\in(0,\infty)^m$,
 \begin{align*}
    & \int_{(0,\infty)^m}|M_2(x,y;a)||g(y)|dy \\
    & \qquad \qquad \leq Ca^{(m-1)/ 2}\left(\int_0^{x_2/2}{{1\over{x_2}}}+\int_{x_2/2}^{2x_2}{{1\over{x_2}}} +\int_{2x_2}^{\infty}{{1\over{y_2}}}\right)
         \Big\{\sup_{t>0}\int_{(0,\infty)^{m-1}} {e^{-ca|\bar{x}_2-\bar{y}_2|^2/t}\over{t^{(m-1)/2}}}|g(y)|d\bar{y}_2 \Big\} dy_2.
 \end{align*}
         where $C$ is not depending on $a$.

So, as in the case of $M_1$, we conclude
$$\int_{(0,\infty)^m}|M_2(x,y;a)||g(y)|dy < \infty,\;\;\;\mbox{a.e.}\;x\in(0,\infty)^m.$$

In a similar way we can see that, for every $j=3,...,m$,
$$\int_{(0,\infty)^m}|M_j(x,y;a)||g(y)|dy <\infty,\;\;\;\mbox{a.e.}\;x\in(0,\infty)^m.$$
Hence, \eqref{14.1} is justified. Thus, we can write
    $$D_{\alpha,1}(a)(g)(x)
        =\int_{(0,\infty)^m}{[R_{\alpha, 1}(x,y;a)-R_1(x,y;a)]g(y)dy}, \quad \mbox{a.e.}\;x\in (0,\infty)^m,$$
where the last integral is absolutely convergent. Moreover, by using the $L^p$-boundedness of the auxiliar operators we conclude that
\begin{equation}\label{eq2.8.1.F1}
      \left\|D_{\alpha ,1}(a)(g)\right\|_{L^p((0,\infty)^m)}\leq C\|g\|_{L^p((0,\infty)^m)},\;\;\;g\in L^p((0,\infty)^m),
    \end{equation}
    where $C>0$ does not depend on $a$.\\

    \noindent \textbf{Step 2}: The aim of this step is to show that
    \begin{equation*}
        \|d_{\alpha,1}(a)(g)\|_{L^p((0,\infty)^m)}
            \leq \frac{C}{a} \|g\|_{L^p((0,\infty)^m)}, \quad a>0,
    \end{equation*}
    being $d_{\alpha,1}(a)$ the operator given by
    $$d_{\alpha,1}(a)(g)(x)
        = \int_{(0,\infty)^m}  \partial_a [ R_{\alpha,1}(x,y;a) - R_1(x,y;a)]g(y)dy.$$ \\

    We now analyze ${\partial}_a[R_{\alpha, 1}(x,y;a)-R_1(x,y;a)]$, $x,y\in (0,\infty)^m$.
    We have that
    \begin{align}\label{W1}
        & {\partial}_a[R_{\alpha, 1}(x,y;a)-R_1(x,y;a)]=
         \partial_a\left(a^{m/2}[R_{\alpha, 1}(\sqrt{a}x,\sqrt{a}y;1)-R_1(\sqrt{a}x,\sqrt{a}y;1)]\right) \nonumber \\
        & \qquad ={m\over 2}a^{(m-2)/2}[R_{\alpha, 1}(\sqrt{a}x,\sqrt{a}y;1)-R_1(\sqrt{a}x,\sqrt{a}y;1)] \nonumber \\
        & \qquad \qquad +{{a^{(m-1)/2}}\over 2}\int_0^{\infty}\left(x_1(\partial_{x_1}G_t^{\alpha_1})(\sqrt{a}x_1,\sqrt{a}y_1)\prod_{j=2}^mW_t^{\alpha_j}(\sqrt{a}x_j,\sqrt{a}y_j) \right. \nonumber \\
        & \qquad \qquad + G_t^{\alpha_1}(\sqrt{a}x_1,\sqrt{a}y_1)\sum_{j=2}^mx_j(\partial_{x_j}W_t^{\alpha_j})(\sqrt{a}x_j,\sqrt{a}y_j)\prod_{i=2,i\neq j}^mW_t^{\alpha_i}(\sqrt{a}x_i,\sqrt{a}y_i) \nonumber \\
        & \qquad \qquad +y_1(\partial_{y_1}G_t^{\alpha_1})(\sqrt{a}x_1,\sqrt{a}y_1)\prod_{j=2}^mW_t^{\alpha_j}(\sqrt{a}x_j,\sqrt{a}y_j) \nonumber \\
        & \qquad \qquad +G_t^{\alpha_1}(\sqrt{a}x_1,\sqrt{a}y_1)\sum_{j=2}^my_j(\partial_{y_j}W_t^{\alpha_j})(\sqrt{a}x_j,\sqrt{a}y_j)\prod_{i=2,i\neq j}^mW_t^{\alpha_i}(\sqrt{a}x_i,\sqrt{a}y_i) \nonumber \\
        & \qquad \qquad - x_1(\partial_{x_1}G_t)(\sqrt{a}x_1,\sqrt{a}y_1)\prod_{j=2}^mW_t(\sqrt{a}x_j,\sqrt{a}y_j) \nonumber \\
        & \qquad \qquad - G_t(\sqrt{a}x_1,\sqrt{a}y_1)\sum_{j=2}^mx_j(\partial_{x_j}W_t)(\sqrt{a}x_j,\sqrt{a}y_j)\prod_{i=2,i\neq j}^mW_t(\sqrt{a}x_i,\sqrt{a}y_i) \nonumber \\
        & \qquad \qquad -y_1(\partial_{y_1}G_t)(\sqrt{a}x_1,\sqrt{a}y_1)\prod_{j=2}^mW_t(\sqrt{a}x_j,\sqrt{a}y_j) \nonumber \\
        & \qquad \qquad \left. -G_t(\sqrt{a}x_1,\sqrt{a}y_1)\sum_{j=2}^my_j(\partial_{y_j}W_t)(\sqrt{a}x_j,\sqrt{a}y_j)\prod_{i=2,i\neq j}^mW_t(\sqrt{a}x_i,\sqrt{a}y_i)\right){{dt}\over{\sqrt{\pi t}}}
    \end{align}

    According to \eqref{eq2.8.1.F1} we get, for every $g\in L^p((0,\infty)^m)$,
  \begin{align}\label{eq2.8.1.F2}
      & \left\|a^{(m-2)/2}\int_{(0,\infty)^m}[R_{\alpha, 1}(\sqrt{a}x,\sqrt{a}y;1)-R_1(\sqrt{a}x,\sqrt{a}y;1)]g(y)dy\right\|_{L^p((0,\infty)^m)} \nonumber \\
      & \qquad \qquad \leq {C\over a}\|g\|_{L^p((0,\infty)^m)}.
    \end{align}

    We want to obtain analogous $L^p$-estimates for the remaining terms in \eqref{W1}. We are going to reorganize them in the new kernels
    $S_t$ and $T_t$ defined below (see \eqref{19.1} and \eqref{25.1}).\\

    We consider, for every $x, y \in(0,\infty)^m$ and $t>0$, the kernel $S_t$ given by
   \begin{align}\label{19.1}
       & S_t(x,y;a)
           =  G_t^{\alpha_1}(\sqrt{a}x_1,\sqrt{a}y_1) \sum_{j=2}^m (x_j\partial_{x_j}+y_j\partial_{y_j})W_t^{\alpha_j}(\sqrt{a}x_j,\sqrt{a}y_j) \prod_{i=2,i\neq j}^mW_t^{\alpha_i}(\sqrt{a}x_i,\sqrt{a}y_i)\nonumber \\
       & \quad \quad -  G_t(\sqrt{a}x_1,\sqrt{a}y_1)\sum_{j=2}^m(x_j\partial_{x_j}+y_j\partial_{y_j})W_t(\sqrt{a}x_j,\sqrt{a}y_j) \prod_{i=2,i\neq j}^mW_t(\sqrt{a}x_i,\sqrt{a}y_i) \nonumber \\
       & \quad =  \Big[G_t^{\alpha_1}(\sqrt{a}x_1,\sqrt{a}y_1)-G_t(\sqrt{a}x_1,\sqrt{a}y_1)\Big] \sum_{j=2}^m  (x_j\partial_{x_j}+y_j\partial_{y_j})W_t^{\alpha_j}(\sqrt{a}x_j,\sqrt{a}y_j) \nonumber \\
       & \qquad \qquad  \times \prod_{i=2,i\neq j}^mW_t^{\alpha_i}(\sqrt{a}x_i,\sqrt{a}y_i) \nonumber \\
       & \quad \quad + G_t(\sqrt{a}x_1,\sqrt{a}y_1)\sum_{j=2}^m(x_j\partial_{x_j}+y_j\partial_{y_j})\Big[W_t^{\alpha_j}(\sqrt{a}x_j,\sqrt{a}y_j)- W_t(\sqrt{a}x_j,\sqrt{a}y_j)\Big] \nonumber \\
       & \qquad \qquad \times \prod_{i=2,i\neq j}^mW_t^{\alpha_i}(\sqrt{a}x_i,\sqrt{a}y_i) \nonumber \\
       & \quad \quad + G_t(\sqrt{a}x_1,\sqrt{a}y_1)\sum_{j=2}^m(x_j\partial_{x_j}+y_j\partial_{y_j})W_t(\sqrt{a}x_j,\sqrt{a}y_j) \nonumber \\
       & \qquad \qquad  \times \sum_{k=2,k\neq j}^m \Big[W_t^{\alpha_k}(\sqrt{a}x_k,\sqrt{a}y_k)- W_t(\sqrt{a}x_k,\sqrt{a}y_k) \Big] \prod_{s=2,s\neq j}^{k-1}W_t(\sqrt{a}x_s,\sqrt{a}y_s)  \nonumber \\
       & \qquad \qquad  \times  \prod_{s=k+1,s\neq j}^{m}W_t^{\alpha_s}(\sqrt{a}x_s,\sqrt{a}y_s) \nonumber \\
       & \quad = S_t^1(x,y;a) + S_t^2(x,y;a) + S_t^3(x,y;a)
    \end{align}

    We study each of the above terms separately. First of all we analyze $S_t^1$. Lemma~\ref{Lem G}, $(b)$, with $\eps=1/4$, implies that
         \begin{align}\label{eq2.8.1.Z9}
            & \int_0^{\infty}\left|G_t^{\alpha_1}(\sqrt{a}x_1,\sqrt{a}y_1)-G_t(\sqrt{a}x_1,\sqrt{a}y_1)\right|e^{5t/4}{{dt}\over{\sqrt{t}}} \nonumber \\
            & \qquad \qquad \leq \frac{C}{\sqrt{a x_1}} \int_0^{\infty}{{e^{-ca|x_1-y_1|^2/t}}\over{t^{5/4}}}dt
                            \leq {{C}\over{\sqrt{a}}}{1\over{x_1}}\sqrt{{x_1}\over{|x_1-y_1|}}, \quad 0<x_1/2<y_1<2x_1,
        \end{align}
    and
        \begin{align}\label{eq2.8.1.Z10}
            & \int_0^{\infty}\left|G_t^{\alpha_1}(\sqrt{a}x_1,\sqrt{a}y_1)-G_t(\sqrt{a}x_1,\sqrt{a}y_1)\right|e^{5t/4}{{dt}\over{\sqrt{t}}} \nonumber \\
            & \qquad \leq C\sqrt{a}\max\{x_1,y_1\}\int_0^{\infty}{{e^{-ca|x_1-y_1|^2/t}}\over{t^2}}dt
            \leq {{C}\over{\sqrt{a}}} {{\max\{x_1,y_1\}}\over{|x_1-y_1|^2}}
             \leq {{C}\over{\sqrt{a}}}
            \left\{\begin{array}{l}
                \displaystyle {1\over{x_1}},\;\;0<y_1<x_1/2, \\
                \\
                \displaystyle {1\over{y_1}},\;\;0<2x_1<y_1.
            \end{array}\right.
        \end{align}

        By combining Lemma~\ref{Lem W}, $(a)$ and $(e)$ taken with $\eps=1/4$;  \eqref{eq2.8.1.Z9} and  \eqref{eq2.8.1.Z10} we deduce that,
        for every $g\in L^p((0,\infty)^m)$, $1<p<\infty$ and $j=2,...,m$,
        \begin{align*}
            & \Big| \int_{(0,\infty)^m}g(y)\int_0^\infty \Big[ G_t^{\alpha_1}(\sqrt{a}x_1,\sqrt{a}y_1)-G_t(\sqrt{a}x_1,\sqrt{a}y_1)\Big]
                    \prod_{i=2,i\neq j}^mW_t^{\alpha_i}(\sqrt{a}x_i,\sqrt{a}y_i) \\
            & \qquad  \times \Big[ x_j(\partial_{x_j}W_t^{\alpha_j})(\sqrt{a}x_j,\sqrt{a}y_j)+y_j(\partial_{y_j}W_t^{\alpha_j})(\sqrt{a}x_j,\sqrt{a}y_j) \Big] {{dt}\over{\sqrt{t}}}dy\Big| \\
            & \quad \leq {C\over a}\Big(\int_0^{x_1/2}{{1\over{x_1}}}+\int_{x_1/2}^{2x_1}{{1\over{x_1}}\sqrt{{x_1}\over{|x_1-y_1|}}}    +\int_{2x_1}^{\infty}{{1\over{y_1}}}\Big)
              \Big\{\sup_{t>0}\int_{(0,\infty)^{m-1}} {{e^{-ca|\bar{x}_1-\bar{y}_1|^2/t}}\over{t^{(m-1)/2}}} |g(y)|d\bar{y}_1\Big\}dy_1,
        \end{align*}
     and the $L^p$-boundedness property for the Hardy and maximal operators leads to
     \begin{align}\label{eq2.8.1.Z11}
            & a^{(m-1)/ 2}\left\|\int_{(0,\infty)^m}g(y)\int_0^\infty S_t^1(x,y;a){{dt}\over{\sqrt{t}}}dy\right\|_{L^p((0,\infty)^m)}
                 \leq {C\over a}\|g\|_{L^p((0,\infty)^m)},
     \end{align}
     where $C>0$ does not depend on $a$.\\

     Now we pass to the study of the kernel $S_t^3$.  Let $k=2,...,m$. Lemma~\ref{Lem W}, $(b)$, allows us to write
        \begin{align}\label{eq2.8.1.Z14}
            & \int_0^{\infty} \left|W_t^{\alpha_k}(\sqrt{a}x_k,\sqrt{a}y_k)-W_t(\sqrt{a}x_k,\sqrt{a}y_k)\right|{{dt}\over t} \nonumber\\
            & \qquad \qquad \leq C\int_0^{\infty} {{e^{-t/2}}\over{(\fa a x_k y_k)^{1/4}}} {{e^{-ca|x_k-y_k|^2/t}}\over{t^{3/2}}}dt
                \leq {C\over{(ax_ky_k)^{1/4}}}\int_0^{\infty}{{e^{-ca|x_k-y_k|^2/t}}\over{t^{5/4}}}dt \nonumber \\
            & \qquad \qquad \leq \frac{C}{\sqrt{a}} \frac{1}{x_k} \sqrt{\frac{x_k}{|x_k-y_k|}}, \quad x_k/2 < y_k < 2x_k,
        \end{align}
        and from Lemma~\ref{Lem W}, $(a)$, we obtain
         \begin{align}\label{eq2.8.1.Z15}
            & \int_0^{\infty}\left|W_t^{\alpha_k}(\sqrt{a}x_k,\sqrt{a}y_k)-W_t(\sqrt{a}x_k,\sqrt{a}y_k)\right|{{dt}\over t} \nonumber \\
            & \qquad \qquad \leq C\int_0^{\infty}{{e^{-ca|x_k-y_k|^2/t}}\over{t^{3/2}}}dt
                    \leq \frac{C}{\sqrt{a}} {1\over{|x_k-y_k|}}\leq{C\over{\sqrt{a}}}
            \left\{\begin{array}{l}
                \displaystyle {1\over{x_k}},\;\;0<y_k<x_k/2, \\
                \\
                \displaystyle {1\over{y_k}},\;\;0<2x_k<y_k.
            \end{array}\right.
        \end{align}

        By combining Lemma~\ref{Lem W}, $(a)$ and $(d)$; Lemma~\ref{Lem G}, $(a)$;  \eqref{eq2.8.1.Z14} and \eqref{eq2.8.1.Z15}  we deduce
        \begin{align}\label{19.2}
            & \Big|\int_{(0,\infty)^m}g(y)\int_0^\infty G_t(\sqrt{a}x_1,\sqrt{a}y_1)\sum_{j=2}^m\left(x_j\partial_{x_j}+y_j\partial_{y_j}\right)W_t(\sqrt{a}x_j,\sqrt{a}y_j) \nonumber \\
            & \qquad \qquad \times \sum_{k=2,k\neq j}^m \Big[W_t^{\alpha_k}(\sqrt{a}x_k,\sqrt{a}y_k)-W_t(\sqrt{a}x_k,\sqrt{a}y_k)\Big]   \prod_{s=2,s\neq j}^{k-1}W_t(\sqrt{a}x_s,\sqrt{a}y_s) \nonumber\\
            &\qquad \qquad  \times \prod_{s=k+1,s\neq j}^mW_t^{\alpha_s}(\sqrt{a}x_s,\sqrt{a}y_s) {{dt}\over{\sqrt{t}}}dy \Big| \nonumber \\
            & \qquad \leq {C\over a}\sum_{j=2}^m\sum_{k=2,k\neq j}^m\left(\int_0^{x_k/2}{{1\over{x_k}}}+\int_{x_k/2}^{2x_k}{{1\over{x_k}}\sqrt{{x_k}\over{|x_k-y_k|}}}
            +\int_{2x_k}^{\infty}{{1\over{y_k}}}\right) \nonumber \\
            & \qquad \qquad \times \Big\{ \sup_{t>0}\int_{(0,\infty)^{m-1}} {{e^{-ca|\bar{x}_k-\bar{y}_k|^2/t}}\over{t^{(m-1)/2}}}|g(y)|d\bar{y}_k \Big\} dy_k, \quad x \in(0,\infty)^m.
        \end{align}

     Then, for every $g\in L^p((0,\infty)^m)$,
     \begin{align}\label{eq2.8.1.Z17}
       & a^{(m-1)/ 2}\left\|\int_{(0,\infty)^m}g(y)\int_0^\infty S_t^3(x,y;a){{dt}\over{\sqrt{t}}}dy\right\|_{L^p((0,\infty)^m)}
                \leq {C\over a}\|g\|_{L^p((0,\infty)^m)},
     \end{align}
     where $C>0$ does not depend on $a$.\\

     Next we concentrate on the kernel $S_t^2$. Lemma~\ref{Lem W}, $(f)$, give us
    \begin{align}\label{eq2.8.1.Z24}
        & \int_0^{\infty}\Big|x_j\left[\partial_{x_j}(W_t^{\alpha_j}-W_t)\right](\sqrt{a}x_j,\sqrt{a}y_j)+y_j\left[\partial_{y_j}(W_t^{\alpha_j}-W_t)\right](\sqrt{a}x_j,\sqrt{a}y_j)\Big|{{dt}\over t} \nonumber \\
        & \qquad \qquad \leq {C \over a} {1\over{(x_jy_j)^{1/4}|x_j-y_j|^{1/2}}},\;\;\;x_j,y_j\in(0,\infty).
     \end{align}
     Also, from Lemma~\ref{Lem W}, $(d)$ and $(e)$, we get
      \begin{align}\label{eq2.8.1.Z25}
      & \int_0^{\infty}\Big|x_j\left[\partial_{x_j}(W_t^{\alpha_j}-W_t)\right](\sqrt{a}x_j,\sqrt{a}y_j)+y_j\left[\partial_{y_j}(W_t^{\alpha_j}-W_t)\right](\sqrt{a}x_j,\sqrt{a}y_j)\Big|{{dt}\over t} \nonumber \\
      & \qquad \qquad \leq {C \over a} {1\over{|x_j-y_j|}}\leq {{C}\over{\sqrt{a}}}\left\{
      \begin{array}{l}
        \displaystyle {1\over{x_j}},\;\;0<y_j<x_j/2, \\
         \\
        \displaystyle {1\over{y_j}},\;\;0<2x_j<y_j.
        \end{array}\right.
     \end{align}

     By combining Lemma~\ref{Lem W}, $(a)$; Lemma~\ref{Lem G}, $(a)$; \eqref{eq2.8.1.Z24} and \eqref{eq2.8.1.Z25} we obtain, for every $g\in L^p((0,\infty)^m)$, $1<p<\infty$,
     \begin{align*}
        & \left| \int_{(0,\infty)^m}g(y)\int_0^{\infty}G_t(\sqrt{a}x_1,\sqrt{a}y_1)\sum_{j=2}^m
                (x_j\partial_{x_j}+y_j\partial_{y_j})\Big[W_t^{\alpha_j}(\sqrt{a}x_j,\sqrt{a}y_j)-W_t(\sqrt{a}x_j,\sqrt{a}y_j)\Big] \right. \\
        & \quad \quad \left.\times\prod_{i=2,i\neq j}^m W_t^{\alpha_i}(\sqrt{a}x_i,\sqrt{a}y_i){{dt}\over{\sqrt{t}}}dy\right| \\
        & \quad \leq {C\over a}\sum_{j=2}^m\left(\int_0^{x_j/2}{{1\over{x_j}}}+\int_{x_j/2}^{2x_j}{{1\over{x_j}}\sqrt{{x_j}\over{|x_j-y_j|}}}
                +\int_{2x_j}^{\infty}{{1\over{y_j}}}\right)
         \Big\{\sup_{t>0}\int_{(0,\infty)^{m-1}} {{e^{-ca|\bar{x}_j-\bar{y}_j|^2/t}}\over{t^{(m-1)/2}}}|g(y)|d\bar{y}_j \Big\} dy_j.
    \end{align*}

    $L^p$-boundedness properties of the Hardy and maximal operators lead to
    \begin{align}\label{eq2.8.1.Z27}
        a^{(m-1)/ 2} \left\|\int_{(0,\infty)^m} g(y)\int_0^\infty S_t^2(x,y;a){{dt}\over{\sqrt{t}}}dy\right\|_{L^p((0,\infty)^m)}
            \leq {C\over a}\|g\|_{L^p((0,\infty)^m)},
     \end{align}
     for every $g\in L^p((0,\infty)^m)$, $1<p<\infty$. Here $C>0$ is not depending on $a$.\\

     Putting together \eqref{eq2.8.1.Z11}, \eqref{eq2.8.1.Z17} and \eqref{eq2.8.1.Z27} we obtain
      \begin{align}\label{eq2.8.1.Z28}
       & a^{(m-1)/ 2}\left\|\int_{(0,\infty)^m}g(y)\int_0^\infty S_t(x,y;a){{dt}\over{\sqrt{t}}}dy\right\|_{L^p((0,\infty)^m)} \leq {C\over a}\|g\|_{L^p((0,\infty)^m)},
     \end{align}
     for every $g\in L^p((0,\infty)^m)$, $1<p<\infty$, where $C>0$ does not depend on $a$.\\

     We now define the kernel
     \begin{align}\label{25.1}
        T_t(x,y;a)
        =  &    \Big(x_1\left[\partial_{x_1}(G_t^{\alpha_1}-G_t)\right](\sqrt{a}x_1,\sqrt{a}y_1)+y_1\left[\partial_{y_1}(G_t^{\alpha_1}-G_t)\right](\sqrt{a}x_1,\sqrt{a}y_1)\Big) \nonumber \\
        & \qquad \times\prod_{j=2}^mW_t^{\alpha_j}(\sqrt{a}x_j,\sqrt{a}y_j) \nonumber \\
        & + \Big( x_1 \partial_{x_1} G_t(\sqrt{a}x_1,\sqrt{a}y_1) + y_1 \partial_{y_1} G_t(\sqrt{a}x_1,\sqrt{a}y_1) \Big) \nonumber \\
        & \qquad \times \Big( \prod_{j=2}^m W_t^{\alpha_j}(\sqrt{a}x_j,\sqrt{a}y_j) - \prod_{j=2}^m W_t(\sqrt{a}x_j,\sqrt{a}y_j) \Big) \nonumber \\
        =&   T_t^1(x,y;a) + T_t^2(x,y;a), \quad x, y \in(0,\infty)^m\;\mbox{and}\;t>0.
    \end{align}

    We start with $T_t^1$.  By Lemma~\ref{Lem G}, $(d)$ and $(e)$, it follows that
    \begin{align}\label{eq2.8.1.Z35}
        & \int_0^\infty \Big| x_1 \partial_{x_1}  \Big( G_t - G_t^{\alpha_1} \Big)(\sqrt{a}x_1,\sqrt{a}y_1) + y_1 \partial_{y_1}  \Big( G_t - G_t^{\alpha_1}\Big)(\sqrt{a}x_1,\sqrt{a}y_1) \Big| \frac{dt}{\sqrt{t}} \nonumber \\
        & \quad \leq \frac{C}{a} \Big[ \frac{1}{|x_1-y_1|^{1/2}} \Big( \frac{x_1^{1/4}}{y_1^{3/4}} + \frac{y_1^{1/4}}{x_1^{3/4}} \Big)
                  +   \frac{1}{\sqrt{x_1^2+y_1^2}}\Big]
         \leq \frac{C}{a} \frac{1}{x_1} \Big( 1 + \sqrt{\frac{x_1}{|x_1-y_1|}} \Big), \quad  x_1/2 < y_1 < 2 x_1,
    \end{align}
    and from  Lemma~\ref{Lem G}, $(f)$, we deduce that
     \begin{align}\label{eq2.8.1.Z36}
        & \int_0^\infty \Big| x_1 \partial_{x_1}  \Big( G_t - G_t^{\alpha_1} \Big)(\sqrt{a}x_1,\sqrt{a}y_1) + y_1 \partial_{y_1}  \Big( G_t - G_t^{\alpha_1}\Big)(\sqrt{a}x_1,\sqrt{a}y_1) \Big| \frac{dt}{\sqrt{t}} \nonumber \\
        & \qquad \leq \frac{C}{\sqrt{a}} \int_0^{\infty}{{e^{-ca\max\{x_1,y_1\}^2/t}}\over{t^{3/2}}}dt
         \leq  \frac{C}{a}
           \left\{\begin{array}{l}
           \displaystyle{1\over{x_1}},\;\;0<y_1<x_1/2. \\
         \\
         \displaystyle{1\over{y_1}},\;\;0<2x_1<y_1.
          \end{array}\right.
    \end{align}

    According to  Lemma~\ref{Lem W}, $(a)$;  \eqref{eq2.8.1.Z35} and \eqref{eq2.8.1.Z36}
    it follows that, for every $g\in L^p((0,\infty)^m)$, $1<p<\infty$ and $x\in(0,\infty)^m$,
     \begin{align*}
         \left|\int_{(0,\infty)^m}g(y)\int_0^{\infty}T_t^1(x,y,a){{dt}\over{\sqrt{t}}}dy\right|
                \leq & \frac{C}{a}\left(\int_0^{x_1/2}{{1\over{x_1}}}+\int_{x_1/2}^{2x_1}{{1\over{x_1}}\Big(1+\sqrt{{x_1}\over{|x_1-y_1|}}}\Big) +\int_{2x_1}^{\infty}{{1\over{y_1}}}\right) \nonumber \\
        & \times  \Big\{ \sup_{t>0}\int_{(0,\infty)^{m-1}} {{e^{-ca|\bar{x}_1-\bar{y}_1|^2/t}}\over{t^{(m-1)/2}}}|g(y)| d\bar{y}_1 \Big\} dy_1.
    \end{align*}

    And, $L^p$-boundedness properties of Hardy and maximal operators, together with, Jensen inequality, imply that
      \begin{align}\label{eq2.8.1.Z38}
       & a^{(m-1)/ 2}\left\|\int_{(0,\infty)^m}g(y)\int_0^\infty T_t^1(x,y,a){{dt}\over{\sqrt{t}}}dy\right\|_{L^p((0,\infty)^m)} \leq {C\over a}\|g\|_{L^p((0,\infty)^m)},
     \end{align}
     where $C>0$ does not depend on $a$.\\

     Now, we consider the kernel $T_t^2$. First of all, observe that
     \begin{align*}
        T_t^2(x,y;a)
            = & \Big( x_1 \partial_{x_1} G_t(\sqrt{a}x_1,\sqrt{a}y_1) + y_1 \partial_{y_1} G_t(\sqrt{a}x_1,\sqrt{a}y_1) \Big)  \\
              &  \times \sum_{i=2}^m \prod_{j=2}^{i-1} W_t(\sqrt{a}x_j,\sqrt{a}y_j)
                    \Big[ W_t^{\alpha_i}(\sqrt{a}x_i,\sqrt{a}y_i) - W_t(\sqrt{a}x_i,\sqrt{a}y_i) \Big]  \\
              & \times \prod_{j=i+1}^{m} W_t^{\alpha_j}(\sqrt{a}x_j,\sqrt{a}y_j).
     \end{align*}
     By taking in mind Lemma~\ref{Lem W}, $(a)$; Lemma~\ref{Lem G}, $(c)$; \eqref{eq2.8.1.Z14}, \eqref{eq2.8.1.Z15} and
     proceeding as in \eqref{19.2} we conclude, for every $g\in L^p((0,\infty)^m)$,
     \begin{align}\label{Tt2}
       & a^{(m-1)/ 2}\left\|\int_{(0,\infty)^m}g(y)\int_0^\infty T_t^2(x,y;a){{dt}\over{\sqrt{t}}}dy\right\|_{L^p((0,\infty)^m)}
                \leq {C\over a}\|g\|_{L^p((0,\infty)^m)},
     \end{align}
     where $C>0$ does not depend on $a$.

     Hence, \eqref{eq2.8.1.Z38} and \eqref{Tt2} give us
     \begin{align}\label{Tt}
       & a^{(m-1)/ 2}\left\|\int_{(0,\infty)^m}g(y)\int_0^\infty T_t(x,y;a){{dt}\over{\sqrt{t}}}dy\right\|_{L^p((0,\infty)^m)}
                \leq {C\over a}\|g\|_{L^p((0,\infty)^m)},
     \end{align}
     being the constant $C>0$ independent of $a$.

     By combining \eqref{eq2.8.1.F2}, \eqref{eq2.8.1.Z28} and \eqref{Tt} we conclude that, for every $g\in L^p((0,\infty)^m)$, $1<p<\infty$
     \begin{align}\label{eq2.8.1.Z39.0}
       & \left\|\int_{(0,\infty)^m}  \partial_a \Big( R_{\alpha,1}(x,y;a) - R_1(x,y;a)\Big)g(y)dy\right\|_{L^p((0,\infty)^m)} \leq {C\over a}\|g\|_{L^p((0,\infty)^m)},
     \end{align}

     Here, as above, $C>0$ does not depend on $a$. By proceeding in a similar way we can also obtain
      \begin{align}\label{eq2.8.1.Z39}
       & \left\|\int_{(0,\infty)^m}  \partial_a^2 \Big( R_{\alpha,1}(x,y;a) - R_1(x,y;a)\Big)g(y)dy\right\|_{L^p((0,\infty)^m)} \leq {C\over{a^{3/2}}}\|g\|_{L^p((0,\infty)^m)},
     \end{align}
     for every $g\in L^p((0,\infty)^m)$, $1<p<\infty$, where $C>0$ is not depending on $a$.\\

     \noindent \textbf{Step 3}: This last step is devoted to show the differentiability of the operator $D_{\alpha,1}$.\\

     Let  $g\in L^p((0,\infty)^m)$, $1<p<\infty$. We can write
      \begin{align*}
        & \frac{1}{h} \Big[ D_{\alpha,1}(a+h)(g)(x) - D_{\alpha.1}(a)(g)(x)\Big] - d_{\alpha,1}(a)(g)(x) \\
        & \qquad= \int_{(0,\infty)^m} \Big[ \frac{1}{h} \int_a^{a+h} \partial_{\lambda} \Big( R_{\alpha,1}(x,y;\lambda) - R_1(x,y;\lambda)\Big) d\lambda
                        - \partial_{a} \Big( R_{\alpha,1}(x,y;a) - R_1(x,y;a)\Big) \Big] g(y)dy \\
        &  \qquad = \int_{(0,\infty)^m} \frac{1}{h} \int_a^{a+h} \int_a^\lambda  \partial^2_{z} \Big( R_{\alpha,1}(x,y;z) - R_1(x,y;z)\Big) dz d\lambda g(y) dy,
        \quad  0 < |h|<a \ x \in (0,\infty)^m,
    \end{align*}
    being $d_{\alpha,1}(a)$ the operator considered in Step 2.
    According to \eqref{eq2.8.1.Z39} we get
    \begin{align*}
        & \Big\| \frac{D_{\alpha,1}(a+h)(g) - D_{\alpha,1}(a)(g)}{h} - d_{\alpha,1}(a)(g) \Big\|_{L^p((0,\infty)^m)} \\
        & \qquad = \Big\| \frac{1}{h} \int_a^{a+h} \int_a^\lambda \int_{(0,\infty)^m}  \partial^2_{z} \Big( R_{\alpha,1}(x,y;z) - R_1(x,y;z)\Big) g(y) dy dz d\lambda \Big\|_{L^p((0,\infty)^m)} \\
        &  \qquad\leq \Big| \frac{1}{h} \int_a^{a+h} \int_a^\lambda \Big\|\int_{(0,\infty)^m}  \partial^2_{z} \Big( R_{\alpha,1}(x,y;z) - R_1(x,y;z)\Big) g(y) dy\Big\|_{L^p((0,\infty)^m)} dz d\lambda\big|  \\
         &  \qquad \leq C \|g\|_{L^p((0,\infty)^m)}\Big| \frac{1}{h}\int_a^{a+h} \int_a^\lambda \frac{dz d\lambda}{z^{3/2}} \Big|  \\
        & \qquad \leq C \Big| \frac{h-2\sqrt{a^2+ah}+2a}{\sqrt{a}h} \Big| \|g\|_{L^p((0,\infty)^m)}, \quad 0<|h|<a.
    \end{align*}
    Hence,
    $$\lim_{h \to 0} \frac{D_{\alpha,1}(a+h) - D_{\alpha,1}(a)}{h} = d_{\alpha,1}(a),$$
    in the sense of convergence in $\mathcal{L}(L^p((0,\infty)^m))$.

    We can proceed in a similar way when $a<0$. We conclude that for each $1<p<\infty$ the function
    $$\begin{array}{rcl}
        \R \setminus \{0\} & \longrightarrow & \mathcal{L}(L^p((0,\infty)^m)) \\
        a & \longmapsto & R_{\alpha,1}(a)
    \end{array}$$
    is differentiable  and that, for every $g \in L^p((0,\infty)^m)$,
    \begin{equation*}
        \left[{d\over{da}}R_{\alpha,1}(a)\right]g(x)
            = \lim_{\varepsilon \to 0^+} \int_{|x-y| > \varepsilon} \partial_{a} R_{\alpha,1}(x,y;a)g(y) dy, \quad  \text{a.e. } x \in (0,\infty)^m.
    \end{equation*}
    \end{proof}

\begin{Lem}\label{Lem p.11}
    Let $\alpha \in (1/2,\infty)^m$ and $1<p<\infty$. The family of operators $\{a \frac{d}{da}R_{\alpha,1}(a)\}_{a \in \R \setminus \{0\}}$
    is $R$-bounded in $L^p((0,\infty)^m)$.
\end{Lem}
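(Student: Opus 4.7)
The plan is to reduce to the Hermite case of \cite{JST} and control the difference using the pointwise kernel estimates supplied by Lemmas~\ref{Lem W} and \ref{Lem G}. Using the splitting $R_{\alpha, 1}(a) = R_1(a) + D_{\alpha,1}(a)$ already exploited in the preceding lemma, write
$$a\frac{d}{da} R_{\alpha,1}(a) = a\frac{d}{da}R_1(a) + a\frac{d}{da} D_{\alpha,1}(a).$$
The $R$-boundedness of $\{a\frac{d}{da}R_1(a)\}_{a>0}$ on $L^p((0,\infty)^m)$ is inherited from the analogous Hermite statement in \cite[Section 2]{JST}, after identifying $g \in L^p((0,\infty)^m)$ with its zero extension in $L^p(\R^m)$ (as in the identification used in the preceding lemma).

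For the contribution $\{a\frac{d}{da} D_{\alpha,1}(a)\}_{a>0}$, my aim is to apply the Calder\'on--Zygmund $R$-boundedness criterion recalled just before Lemma~\ref{Lem p.18}. This needs two ingredients:
\begin{enumerate}
\item[(i)] a uniform $L^2$-bound $\|a\frac{d}{da} D_{\alpha,1}(a)\|_{L^2 \to L^2} \leq C$; and
\item[(ii)] uniform pointwise estimates
$$|K_a(x,y)| \leq \frac{C}{|x-y|^m}, \qquad |\nabla_x K_a(x,y)| + |\nabla_y K_a(x,y)| \leq \frac{C}{|x-y|^{m+1}},$$
where $K_a(x,y) = a \partial_a [R_{\alpha,1}(x,y;a)-R_1(x,y;a)]$ and $C$ is independent of $a$.
\end{enumerate}
Item (i) is immediate: take $p = 2$ in \eqref{eq2.8.1.Z39.0} to obtain $\|\partial_a D_{\alpha,1}(a)\|_{L^2 \to L^2} \leq C/a$, and multiply by $a$.

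For (ii), I would revisit the decomposition \eqref{W1} of $\partial_a[R_{\alpha,1}(x,y;a) - R_1(x,y;a)]$ used in Step~2 of the preceding lemma, and extract pointwise estimates on the individual kernel fragments $M_i$, $S_t$ and $T_t$ rather than passing through Hardy and heat-maximal operators. The crucial point is that Lemmas~\ref{Lem W}(b),(f) and \ref{Lem G}(b),(d),(e),(f) provide Gaussian bounds in $\sqrt{a}\,|x_j-y_j|$ with a built-in gain of order $1/\sqrt{a}$ coming from the differences $G_t^{\alpha_1} - G_t$ and $W_t^{\alpha_j} - W_t$. Integrating in $t$ against $dt/\sqrt{t}$, these $1/\sqrt{a}$ gains combine with the prefactor $a^{(m-1)/2}$ in \eqref{W1} to yield a kernel bound of the form $C/(a\,|x-y|^m)$, which the additional factor $a$ from $a\frac{d}{da}$ turns into the Calder\'on--Zygmund bound required in (ii). For the gradient estimates, differentiating one of the heat kernels in $x$ or $y$ costs an extra factor $\sqrt{a}/|x-y|$ (via Lemma~\ref{Lem W}(c) and Lemma~\ref{Lem G}(g)), producing the desired $1/|x-y|^{m+1}$ decay after the same accounting.

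The main obstacle is the gradient bound in (ii): applying $\nabla_x$ or $\nabla_y$ to the products appearing in \eqref{W1} generates a large number of cross terms, each combining a derivative of $G_t^{\alpha_1} - G_t$ (or a second derivative of the scalar Hermite heat kernel) with undifferentiated Laguerre factors, together with new terms coming from $\nabla(A_{\alpha_1}(1) W_t^{\alpha_1})$. Controlling all of them uniformly in $a$ requires exactly the second-order analogues of Lemmas~\ref{Lem W} and~\ref{Lem G} that have been collected in Section~3.1 (notably Lemma~\ref{Lem G}(g)); the bookkeeping parallels that of Step~2 of the preceding lemma, but kept at the pointwise level. Once (i) and (ii) are established, \cite[Theorem 1.3, Chapter~XII]{Tor} yields \eqref{eq2.22} for $\{a\frac{d}{da} D_{\alpha,1}(a)\}_{a>0}$, completing the proof.
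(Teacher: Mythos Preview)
Your approach matches the paper's: split $a\frac{d}{da}R_{\alpha,1}(a)$ into the Hermite piece handled by \cite{JST} and the difference $a\frac{d}{da}D_{\alpha,1}(a)$, then verify the Calder\'on--Zygmund hypotheses for the latter by extracting pointwise kernel bounds from the decomposition \eqref{W1} via Lemmas~\ref{Lem W} and~\ref{Lem G}, and conclude with \cite[Theorem 1.3, Chapter~XII]{Tor}. The only cosmetic difference is that the paper assembles the $L^2$ bound and the kernel bounds for the full operator $a\frac{d}{da}R_{\alpha,1}(a)$ (adding the JST kernel estimates for $a\frac{d}{da}R_1$ to the difference estimates \eqref{eqG0} and \eqref{eqG12}) and then applies the criterion once, whereas you invoke $R$-boundedness of the Hermite family directly and apply the criterion only to $D_{\alpha,1}$; both routes require exactly the same technical work on the difference kernel.
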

\begin{proof}
    From \eqref{eq2.8.1.Z39.0} and \cite[Theorem 2.1]{JST} we deduce that
    $$\sup_{a \in \R \setminus \{0\}} \| a{d\over{da}} R_{\alpha,1}(a)\|_{\mathcal{L}(L^2((0,\infty)^m))}<\infty.$$
    Our next objective is to see that
    \begin{equation}\label{eqG0}
        \sup_{a \in \R \setminus \{0\}} \Big|a {d\over{da}} \Big( R_{\alpha,1}(x,y;a) - R_1(x,y;a)  \Big) \Big|
            \leq \frac{C}{|x-y|^m}, \quad  x,y \in (0,\infty)^m, x \neq y.
    \end{equation}

    We will have \eqref{W1} in mind. Let $a>0$. In the sequel the constant $C>0$ does not depend on $a$. As in \eqref{A2} and by \cite{ST1} we get
    \begin{equation}\label{eqG1}
       \Big|a^{(m-2)/2}  \Big( R_{\alpha,1}(\sqrt{a}x,\sqrt{a}y;1) - R_1(\sqrt{a}x,\sqrt{a}y;1)  \Big) \Big|
            \leq \frac{C}{a|x-y|^m}, \quad  x,y \in (0,\infty)^m, x \neq y.
    \end{equation}

    Next we analyze the terms associates to $S_t$ and $T_t$ in \eqref{W1} (see \eqref{19.1} and \eqref{25.1}).

    By Lemma~\ref{Lem W}, $(a)$; as in Lemma~\ref{Lem W}, $(e)$; and Lemma~\ref{Lem G}, $(b)$, we obtain
    \begin{align}\label{eqG4}
        & \int_0^{\infty} \Big| S_t^1(x,y;a) \Big|{{dt}\over{\sqrt{t}}} \nonumber \\
        & \quad \leq C\Big( \chi_{\{x_1/2<y_1<2x_1\}}(y_1) \int_0^{\infty}{{e^{-ca|x-y|^2/t}}\over{\sqrt{a}t^{(m+2)/2}}}dt
                                +\max\{x_1,y_1\}\chi_{\{x_1/2<y_1<2x_1\}^c}(y_1)\int_0^{\infty}{{e^{-ca|x-y|^2/t}}\over{t^{2+(m-1)/2}}}dt\Big) \nonumber \\
        & \quad \leq C\Big({1\over{a^{(m+1)/2}|x-y|^m}}\chi_{\{x_1/2<y_1<2x_1\}}(y_1)+{{\max\{x_1,y_1\}}\over{|x-y|^ma^{m/2}}}\int_0^{\infty}{{e^{-ca|x_1-y_1|^2/t}}\over{t^{3/2}}}dt\chi_{\{x_1/2<y_1<2x_1\}^c}(y_1)\Big) \nonumber \\
        &  \quad \leq C\Big({1\over{a^{(m+1)/2}|x-y|^m}}\chi_{\{x_1/2<y_1<2x_1\}}(y_1)+{{\max\{x_1,y_1\}}\over{|x-y|^ma^{m/2}}}{1\over{|x_1-y_1|\sqrt{a}}}\chi_{\{x_1/2<y_1<2x_1\}^c}(y_1)\Big) \nonumber \\
        & \quad \leq {C\over{a^{(m+1)/2}|x-y|^m}},\;\;\;\;x,y\in(0,\infty)^m,\;x\neq y.
    \end{align}
 We have changed the power $3/4$ of $t$ by $1/2$ in Lemma~\ref{Lem G}, $(b)$, when it was needed (see its proof).

        Also as in Lemma~\ref{Lem W}, $(a)$, $(b)$, $(d)$; and Lemma~\ref{Lem G}, $(a)$; we get
     \begin{align}\label{eqG5}
        & \int_0^{\infty}  \left|S_t^3(x,y;a)\right|{{dt}\over{\sqrt{t}}}
          \leq {C\over{\sqrt{a}}}\int_0^{\infty}{{e^{-ca|x-y|^2/t}}\over{t^{(m+2)/2}}}dt
             \leq {C\over{a^{(m+1)/2}|x-y|^m}},\;\;\;\;x,y\in(0,\infty)^m,\;x\neq y.
     \end{align}
In Lemma~\ref{Lem W}, $(b)$, the factor $(\fa a x_k y_k)^{-1}$ has been removed when it was less than $1$.

        We obtain as in Lemma~\ref{Lem W}, $(a)$ and $(f)$, and Lemma~\ref{Lem G}, $(a)$,
         \begin{align}\label{eqG6}
            & \int_0^{\infty}  \left| S_t^2(x,y;a)\right|{{dt}\over{\sqrt{t}}} \leq {C\over{\sqrt{a}}}\int_0^{\infty}{{e^{-ca|x-y|^2/t}}\over{t^{(m+2)/2}}}dt
                \leq {C\over{a^{(m+1)/2}|x-y|^m}},\;\;\;\;x,y\in(0,\infty)^m,\;x\neq y.
        \end{align}

     On the other hand, from Lemma~\ref{Lem W}, $(a)$; and Lemma~\ref{Lem G}, $(d)$, $(e)$ and $(f)$; we deduce that
    \begin{align}\label{eqG11}
         \int_0^{\infty} & \Big| T_t^1(x,y;a) \Big| \frac{dt}{\sqrt{t}}
       \leq \frac{C}{a^{(m+1)/2} |x-y|^m}, \quad  x,y \in (0,\infty)^m,\;\;x\neq y.
    \end{align}
    Moreover, by Lemma~\ref{Lem W}, $(a)$ and $(b)$; and Lemma~\ref{Lem G}, $(c)$,
    \begin{align}\label{eqG13}
         \int_0^{\infty} & \Big| T_t^2(x,y;a) \Big| \frac{dt}{\sqrt{t}}
       \leq \frac{C}{a^{(m+1)/2} |x-y|^m}, \quad  x,y \in (0,\infty)^m,\;\;x\neq y.
    \end{align}

    Putting together \eqref{eqG1},  \eqref{eqG4}, \eqref{eqG5}, \eqref{eqG6}, \eqref{eqG11} and \eqref{eqG13}, we conclude that \eqref{eqG0} holds.

    The following property holds
     \begin{align}\label{eqG12}
        \sup_{a \in \R \setminus \{0\}} & \left\{\Big|\nabla_x\left(a {d\over{da}} \Big( R_{\alpha,1}(x,y;a) - R_1(x,y;a)  \Big) \right)\Big|+\Big|\nabla_y\left(a {d\over{da}} \Big( R_{\alpha,1}(x,y;a) - R_1(x,y;a)  \Big) \right)\Big|\right\} \nonumber \\
      & \leq \frac{C}{|x-y|^{m+1}}, \quad  x,y \in (0,\infty)^m,\;\;x\neq y.
    \end{align}

    We now prove that
     \begin{align}\label{eqG22}
       &  \sup_{a \in \R \setminus \{0\}} \Big|\partial_{x_1}\left(a {d\over{da}} \Big( R_{\alpha,1}(x,y;a) - R_1(x,y;a)  \Big) \right)\Big| \leq \frac{C}{|x-y|^{m+1}}, \quad  x,y \in (0,\infty)^m,\;\;x\neq y.
    \end{align}

    The proof of \eqref{eqG12} can be completed by the interested reader by using the same ideas used in the proof of \eqref{eqG22} but making careful manipulations.

    Let $a>0$. According to \eqref{W1} we have that, for every $x,y \in (0,\infty), \ x \neq y$
     \begin{align*}
        \partial_{x_1}  & \Big[ \partial_a \Big( R_{\alpha,1}(x,y;a) - R_1(x,y;a) \Big) \Big]
            = {m\over 2}a^{(m-2)/2}\partial_{x_1} \Big[ R_{\alpha,1}(\sqrt{a}x,\sqrt{a}y;1) - R_1(\sqrt{a}x,\sqrt{a}y;1) \Big] \nonumber \\
        & +{{a^{(m-1)/2}}\over 2}\int_0^{\infty}\left\{ \Big[ (\partial_{x_1} G_t^{\alpha_1})(\sqrt{a}x_1,\sqrt{a}y_1) - (\partial_{x_1} G_t)(\sqrt{a}x_1,\sqrt{a}y_1)  \Big]\prod_{j=2}^m W_t^{\alpha_j}(\sqrt{a}x_j,\sqrt{a}y_j) \right.\nonumber \\
        & + x_1\sqrt{a} \Big[(\partial_{x_1}^2 G_t^{\alpha_1})(\sqrt{a}x_1,\sqrt{a}y_1) - (\partial_{x_1}^2 G_t)(\sqrt{a}x_1,\sqrt{a}y_1)\Big]\prod_{j=2}^m W_t^{\alpha_j}(\sqrt{a}x_j,\sqrt{a}y_j) \nonumber \\
        & + \sqrt{a}\Big[(\partial_{x_1} G_t^{\alpha_1})(\sqrt{a}x_1,\sqrt{a}y_1) - (\partial_{x_1} G_t)(\sqrt{a}x_1,\sqrt{a}y_1)\Big] \nonumber \\
        & \times\sum_{j=2}^m\Big(x_j(\partial_{x_j}W_t^{\alpha_j})(\sqrt{a}x_j,\sqrt{a}y_j)+y_j(\partial_{y_j}W_t^{\alpha_j})(\sqrt{a}x_j,\sqrt{a}y_j)\Big)\prod_{i=2,i\neq j}^m W_t^{\alpha_i}(\sqrt{a}x_i,\sqrt{a}y_i) \nonumber \\
        & \left.+ \sqrt{a}y_1\Big[ (\partial_{x_1y_1}^2 G_t^{\alpha_1})(\sqrt{a}x_1,\sqrt{a}y_1) - (\partial_{x_1y_1}^2 G_t)(\sqrt{a}x_1,\sqrt{a}y_1)  \Big]\prod_{j=2}^m W_t(\sqrt{a}x_j,\sqrt{a}y_j)\right\}{{dt}\over{\sqrt{t}}}.
    \end{align*}

    According to \eqref{A3} and \cite{ST1} we obtain for every $x,y \in (0,\infty)^m, \ x \neq y$
     \begin{align}\label{eqG24}
        a^{(m-2)/2}\Big|\partial_{x_1}\Big( R_{\alpha,1}(\sqrt{a}x,\sqrt{a}y;1) - R_1(\sqrt{a}x,\sqrt{a}y;1)  \Big) \Big|
            \leq \frac{C}{a|x-y|^{m+1}}.
    \end{align}

    Then, Lemma~\ref{Lem W}, $(a)$, and Lemma~\ref{Lem G}, $(d)$, lead to
    \begin{align}\label{eqG27}
        a^{(m-1)/2}\int_0^{\infty} & \Big| \left(\partial_{x_1}G_t^{\alpha_1}\right)(\sqrt{a}x_1,\sqrt{a}y_1) -\left( \partial_{x_1} G_t\right)(\sqrt{a}x_1,\sqrt{a}y_1) \Big| \prod_{j=2}^mW_t^{\alpha_j}(\sqrt{a}x_j,\sqrt{a}y_j)\frac{dt}{\sqrt{t}} \nonumber \\
      & \leq \frac{C}{a |x-y|^{m+1}}, \quad  x,y \in (0,\infty)^m,\;\;x\neq y.
    \end{align}
    Moreover, we apply Lema~\ref{Lem G}, $(g)$, to obtain, for each $x,y \in (0,\infty)^m$, $x \neq y$,
    \begin{align}\label{eqG33}
        a^{m/2}x_1\int_0^{\infty} & \Big| \partial_{x_1}^2\Big( G_t - G_t^{\alpha_1} \Big)(\sqrt{a}x_1,\sqrt{a}y_1) \Big|  \prod_{j=2}^mW_t^{\alpha_j}(\sqrt{a}x_j,\sqrt{a}y_j) \frac{dt}{\sqrt{t}}
         \leq \frac{C}{a|x-y|^{m+1}}
    \end{align}

    By using Lemma~\ref{Lem W}, $(a)$ and $(e)$; and by proceeding as in the previous cases we obtain
    \begin{align}\label{eqG34}
      &  a^{m/2}\int_0^{\infty}  \Big| \left(\partial_{x_1}G_t^{\alpha_1}\right)(\sqrt{a}x_1,\sqrt{a}y_1) -\left( \partial_{x_1} G_t\right)(\sqrt{a}x_1,\sqrt{a}y_1) \Big|
                \sum_{j=2}^m \prod_{i=2,i\neq j}^mW_t(\sqrt{a}x_i,\sqrt{a}y_i) \nonumber  \\
      & \qquad \qquad  \times\Big|x_j \left(\partial_{x_j}W_t^{\alpha_j}\right)(\sqrt{a}x_j,\sqrt{a}y_j)+y_j \left(\partial_{y_j}W_t^{\alpha_j}\right)(\sqrt{a}x_j,\sqrt{a}y_j) \Big| \frac{dt}{\sqrt{t}} \nonumber \\
      & \qquad \leq C a^{(m-1)/2}\int_0^\infty \frac{e^{-ca|x-y|^2/t}}{t^{(m+3)/2}} dt\leq \frac{C}{a |x-y|^{m+1}}, \quad  x,y \in (0,\infty)^m,\;\;x\neq y.
    \end{align}

    Finally, as in the proof of \eqref{eqG33} we get
     \begin{align}\label{eqG35}
        a^{m/2}\int_0^{\infty} & y_1\Big|\Big( \partial_{y_1x_1}^2 G_t^{\alpha_1} \Big)(\sqrt{a}x_1,\sqrt{a}y_1)-\Big( \partial_{y_1x_1}^2 G_t \Big)(\sqrt{a}x_1,\sqrt{a}y_1) \Big|  \prod_{j=2}^mW_t(\sqrt{a}x_j,\sqrt{a}y_j) \frac{dt}{\sqrt{t}} \nonumber \\
            & \leq \frac{C}{a(|x-y|^{m+1})},\quad  x,y \in (0,\infty)^m,\;\;x\neq y.
    \end{align}

    Putting together \eqref{eqG24}, \eqref{eqG27}, \eqref{eqG33}, \eqref{eqG34} and \eqref{eqG35} we deduce
     \begin{align*}
         \Big|\partial_{x_1}  \partial_a \Big( R_{\alpha,1}(x,y;a) - R_1(x,y;a) \Big) \Big| \leq \frac{C}{a|x-y|^{m+1}},\quad  x,y \in (0,\infty)^m,\;\;x\neq y.
    \end{align*}

    If $a<0$ we can proceed in a similar way and \eqref{eqG22} is established.

    According to \cite[Proof of Lemma 2.2 and Proposition 2.3]{JST}, we have that
    $$\sup_{a \in \R \setminus \{0\}} \Big|a {d\over{da}} R_1(x,y;a) \Big|
        \leq \frac{C}{|x-y|^m}, \quad  x,y \in (0,\infty)^m, \ x \neq y, $$
    and
    $$\sup_{a \in \R \setminus \{0\}} \Big\{\Big| \nabla_x \Big(a {d\over{da}} R_1(x,y;a) \Big) \Big| + \Big| \nabla_y \Big(a {d\over{da}} R_1(x,y;a) \Big) \Big| \Big\}
        \leq \frac{C}{|x-y|^{m+1}}, \quad  x,y \in (0,\infty)^m, \ x \neq y. $$
    Then, from \eqref{eqG0} and \eqref{eqG22}, we conclude that
    $$\sup_{a \in \R \setminus \{0\}} \Big| a {d\over{da}}R_{\alpha,1}(x,y;a) \Big|
        \leq \frac{C}{|x-y|^m}, \quad  x,y \in (0,\infty)^m, \ x \neq y, $$
    and
    $$\sup_{a \in \R \setminus \{0\}} \Big\{\Big| \nabla_x \Big(a {d\over{da}} R_{\alpha,1}(x,y;a) \Big) \Big| + \Big| \nabla_y \Big(a {d\over{da}} R_{\alpha,1}(x,y;a) \Big) \Big| \Big\}
        \leq \frac{C}{|x-y|^{m+1}}, \quad  x,y \in (0,\infty)^m, \ x \neq y. $$

        By \cite[Theorem 1.3, Chapter XII]{Tor} the proof of this lemma is finished.
\end{proof}



\begin{thebibliography}{10}

\bibitem{BL}
{\sc J.~Bergh and J.~L{\"o}fstr{\"o}m}, {\em Interpolation spaces. {A}n
  introduction}, Springer-Verlag, Berlin, 1976.

\bibitem{BFRS}
{\sc J.~J. Betancor, J.~C. Fari{\~n}a, L.~Rodr{\'i}guez-Mesa, and A.~Sanabria},
  {\em Higher order {R}iesz transforms for {L}aguerre expansions}, Illinois J.
  of Math., 55 (2011), pp.~27--68.

\bibitem{BT}
{\sc B.~Bongioanni and J.~L. Torrea}, {\em Sobolev spaces associated to the
  harmonic oscillator}, Proc. Indian Acad. Sci. Math. Sci., 116 (2006),
  pp.~337--360.

\bibitem{BT2}
\leavevmode\vrule height 2pt depth -1.6pt width 23pt, {\em What is a {S}obolev
  space for the {L}aguerre function systems?}, Studia Math., 192 (2009),
  pp.~147--172.

\bibitem{CS}
{\sc P.~Chen and A.~Sikora}, {\em Sharp spectral multipliers for a new class of
  {G}rushin type operators}.
\newblock Preprint 2012
  \href{http://arxiv.org/abs/1210.0322}{\texttt{(arXiv:1210.0322v1)}}.

\bibitem{Ch}
{\sc M.~Christ}, {\em {$L^p$} bounds for spectral multipliers on nilpotent
  groups}, Trans. Amer. Math. Soc., 328 (1991), pp.~73--81.

\bibitem{CW}
{\sc R.~R. Coifman and G.~Weiss}, {\em Analyse harmonique non-commutative sur
  certains espaces homog\`enes}, Lecture Notes in Mathematics, Vol. 242,
  Springer-Verlag, Berlin, 1971.

\bibitem{DOS}
{\sc X.~T. Duong, E.~M. Ouhabaz, and A.~Sikora}, {\em Plancherel-type estimates
  and sharp spectral multipliers}, J. Funct. Anal., 196 (2002), pp.~443--485.

\bibitem{DSY}
{\sc X.~T. Duong, A.~Sikora, and L.~Yan}, {\em Weighted norm inequalities,
  {G}aussian bounds and sharp spectral multipliers}, J. Funct. Anal., 260
  (2011), pp.~1106--1131.

\bibitem{EN}
{\sc K.~Engel and R.~Nagel}, {\em One-parameter semigroups for linear evolution
  equations}, vol.~194 of Graduate Texts in Mathematics, Springer-Verlag, New
  York, 2000.

\bibitem{FGS}
{\sc E.~B. Fabes, C.~E. Guti{\'e}rrez, and R.~Scotto}, {\em Weak-type estimates
  for the {R}iesz transforms associated with the {G}aussian measure}, Rev. Mat.
  Iberoamericana, 10 (1994), pp.~229--281.

\bibitem{FoSt}
{\sc G.~B. Folland and E.~M. Stein}, {\em Hardy spaces on homogeneous groups},
  vol.~28 of Mathematical Notes, Princeton University Press, Princeton, N.J.,
  1982.

\bibitem{HTV2}
{\sc E.~Harboure, J.~L. Torrea, and B.~E. Viviani}, {\em Riesz transforms for
  {L}aguerre expansions}, Indiana Univ. Math. J., 55 (2006), pp.~999--1014.

\bibitem{HLP}
{\sc G.~H. Hardy, J.~E. Littlewood, and G.~P{\'o}lya}, {\em Inequalities},
  Cambridge, at the University Press, 1934.

\bibitem{He}
{\sc W.~Hebisch}, {\em A multiplier theorem for {S}chr\"odinger operators},
  Colloq. Math., 60/61 (1990), pp.~659--664.

\bibitem{JST}
{\sc K.~Jotsaroop, P.~Sanjay, and S.~Thangavelu}, {\em Riesz transforms and
  multipliers for the {G}rushin operator}.
\newblock To appear in J. Analyse Math.
  \href{http://arxiv.org/abs/1110.3227}{\texttt{(arXiv:1110.3227v1)}}.

\bibitem{Leb}
{\sc N.~N. Lebedev}, {\em Special functions and their applications}, Dover
  Publications Inc., New York, 1972.

\bibitem{MaCa}
{\sc C.~Mart{\'{\i}}nez~Carracedo and M.~Sanz~Alix}, {\em The theory of
  fractional powers of operators}, vol.~187 of North-Holland Mathematics
  Studies, North-Holland Publishing Co., Amsterdam, 2001.

\bibitem{MM}
{\sc A.~Martini and D.~Müller}, {\em A sharp multiplier theorem for {G}rushin
  operators in arbitrary dimensions}.
\newblock Preprint 2012
  \href{http://arxiv.org/abs/1210.3564}{\texttt{(arXiv:1210.3564v1)}}.

\bibitem{MaSi}
{\sc A.~Martini and A.~Sikora}, {\em Weighted {P}lancherel estimates and sharp
  spectral multipliers for the {G}rushin operators}.
\newblock To appear in Math. Res. Lett.
  \href{http://arxiv.org/abs/1204.1159}{\texttt{(arXiv:1204.1159v1)}}.

\bibitem{Me1}
{\sc S.~Meda}, {\em A general multiplier theorem}, Proc. Amer. Math. Soc., 110
  (1990), pp.~639--647.

\bibitem{MS}
{\sc B.~Muckenhoupt and E.~M. Stein}, {\em Classical expansions and their
  relation to conjugate harmonic functions}, Trans. Amer. Math. Soc., 118
  (1965), pp.~17--92.

\bibitem{MW}
{\sc B.~Muckenhoupt and D.~W. Webb}, {\em Two-weight norm inequalities for the
  {C}es\`aro means of {H}ermite expansions}, Trans. Amer. Math. Soc., 354
  (2002), pp.~4525--4537 (electronic).

\bibitem{No}
{\sc A.~Nowak}, {\em Heat-diffusion and {P}oisson integrals for {L}aguerre and
  special {H}ermite expansions on weighted {$L^p$} spaces}, Studia Math., 158
  (2003), pp.~239--268.

\bibitem{NoSt1}
{\sc A.~Nowak and K.~Stempak}, {\em Riesz transforms and conjugacy for
  {L}aguerre function expansions of {H}ermite type}, J. Funct. Anal., 244
  (2007), pp.~399--443.

\bibitem{NoSt3}
\leavevmode\vrule height 2pt depth -1.6pt width 23pt, {\em On
  {$L^p$}-contractivity of {L}aguerre semigroups}.
\newblock To appear in Illinois J. of Math.
  \href{http://arxiv.org/abs/1011.5437}{\texttt{(arXiv:1011.5437v1)}}.

\bibitem{Ouh}
{\sc E.~M. Ouhabaz}, {\em Gaussian estimates and holomorphy of semigroups},
  Proc. Amer. Math. Soc., 123 (1995), pp.~1465--1474.

\bibitem{Sik}
{\sc A.~Sikora}, {\em Multivariable spectral multipliers and analysis of
  quasielliptic operators on fractals}, Indiana Univ. Math. J., 58 (2009),
  pp.~317--334.

\bibitem{SikWr}
{\sc A.~Sikora and J.~Wright}, {\em Imaginary powers of {L}aplace operators},
  Proc. Amer. Math. Soc., 129 (2001), pp.~1745--1754 (electronic).

\bibitem{ST1}
{\sc K.~Stempak and J.~L. Torrea}, {\em Poisson integrals and {R}iesz
  transforms for {H}ermite function expansions with weights}, J. Funct. Anal.,
  202 (2003), pp.~443--472.

\bibitem{Sz}
{\sc G.~Szeg{\H{o}}}, {\em Orthogonal polynomials}, American Mathematical
  Society, Providence, R.I., fourth~ed., 1975.
\newblock American Mathematical Society, Colloquium Publications, Vol. XXIII.

\bibitem{Th}
{\sc S.~Thangavelu}, {\em Lectures on {H}ermite and {L}aguerre expansions},
  vol.~42 of Mathematical Notes, Princeton University Press, Princeton, NJ,
  1993.

\bibitem{Tor}
{\sc A.~Torchinsky}, {\em Real-variable methods in harmonic analysis}, vol.~123
  of Pure and Applied Mathematics, Academic Press Inc., Orlando, FL, 1986.

\bibitem{We}
{\sc L.~Weis}, {\em Operator-valued {F}ourier multiplier theorems and maximal
  {$L_p$}-regularity}, Math. Ann., 319 (2001), pp.~735--758.

\bibitem{Ya}
{\sc A.~Yagi}, {\em Co\"\i ncidence entre des espaces d'interpolation et des
  domaines de puissances fractionnaires d'op\'erateurs}, C. R. Acad. Sci. Paris
  S\'er. I Math., 299 (1984), pp.~173--176.

\end{thebibliography}

\end{document}